\newcolumntype{L}{>{\centering\arraybackslash}m{3cm}}
\pgfplotsset{compat=newest}
\pgfplotsset{width=6cm}
\newcounter{custom}
\numberwithin{custom}{section} % Ensure it resets with each new section
\let\oldsubsection\subsection
\renewcommand{\subsection}{\stepcounter{custom}\oldsubsection}
\theoremstyle{plain}
\newtheorem{lemma}[custom]{Lemma}
\newtheorem{corollary}[custom]{Corollary}
\newtheorem{remark}[custom]{Remark}
\definecolor{mygrey}{gray}{0.7}
\definecolor{negativecolour}{rgb}{0.85, 0.0, 0.0}
\definecolor{positivecolour}{rgb}{1.0, 0.7, 0.0}
\newcommand{\HH}{\ensuremath{\operatorname H}\xspace}
\newcommand{\sobh}[1]{\ensuremath{\HH^{#1}}}
\newcommand{\ensemble}[2]{\ensuremath{\left\{ #1:\;#2 \right\}}}
\renewcommand{\d}{\ensuremath{\,\mathrm{d}}}
 \providecommand{\D}{\ensuremath{\mathrm{D}}}
\newcommand{\abs}[1]{\ensuremath{\left|#1\right|}}
\newcommand{\Forall}{\:\forall\:}
\newcommand{\Foreach}{\quad\Forall}
\newcommand{\W}{\ensuremath{\Omega}\xspace}
\renewcommand{\jump}[1]{\ensuremath{\left\llbracket #1\right\rrbracket}}
\newcommand{\ujump}[1]{\ensuremath{\left\lfloor\!\!\left\lfloor #1 \right\rfloor\!\!\right\rfloor}}
\newcommand{\upen}{\mu}
\newcommand{\fes}{\ensuremath{\mathbb V}}
\newcommand{\qp}[1]{\ensuremath{\!\left({#1}\right)}}
\newcommand{\enorm}[1]{\ensuremath{\left|\!\left|\!\left| #1 \right|\!\right|\!\right|}}
\renewcommand{\vec}[1]{\ensuremath{\boldsymbol{#1}}}
\renewcommand{\formb}[2]{b\!\left( #1, #2\right)}
\newcommand{\mathscript}{\mathscr}
\renewcommand{\point}{\vec x}
\newcommand{\cA}{\ensuremath{\mathscript A}\xspace}
\begin{document}

% Front matter
\title{A nodally bound-preserving discontinuous Galerkin method for the drift-diffusion equation}

\author[1]{Gabriel R. Barrenechea}
\ead{gabriel.barrenechea@strath.ac.uk}

\author[2]{Tristan Pryer}
\ead{tmp38@bath.ac.uk}

\author[2]{Alex Trenam\corref{cor1}}
\ead{amt83@bath.ac.uk}

\cortext[cor1]{Corresponding author}

\affiliation[1]{organization={Department of Mathematics and Statistics},
  addressline={University of Strathclyde},
  city={26 Richmond Street, Glasgow},
  postcode={G1 1XH},
  country={UK},
}
\affiliation[2]{organization={Department of Mathematical Sciences},
  addressline={University of Bath},
  city={Claverton Down, Bath},
  postcode={BA2 7AY},
  country={UK},
}

\begin{abstract}
  In this work, we introduce and analyse discontinuous Galerkin (dG)
  methods for the drift-diffusion model. We explore two dG
  formulations: a classical interior penalty approach and a nodally
  bound-preserving method. Whilst the interior penalty method
  demonstrates well-posedness and convergence, it fails to guarantee
  non-negativity of the solution. To address this deficit, which is
  often important to ensure in applications, we employ a
  positivity-preserving method based on a convex subset formulation,
  ensuring the non-negativity of the solution at the Lagrange
  nodes. We validate our findings by summarising extensive numerical
  experiments, highlighting the novelty and effectiveness of our
  approach in handling the complexities of charge carrier transport.
\end{abstract}

% Need to provide 1-7 keywords
\begin{keyword}
  \MSC[2010]{65M60, 65M22}
\end{keyword}

\maketitle

\section{Introduction}
\label{sec:introduction}

Charge carrier transport is a physical process concerned with the
interaction of mobile charge carriers in the presence of an electric
field. Modelling this process is of interest in many applications
including battery electrolytes, fuel cells, ion membrane channels,
plasma physics, semi-conductor devices, etc. (see
e.g. \cite{Jungel2009, Wei2012} for details). The typical scale of
applications means that molecular dynamics approaches are often
computationally impractical, despite offering the most detailed
physical description. Continuum models are a common alternative, and
include the Poisson-Nernst-Planck (PNP) system, also known as the
drift-diffusion equations (or Van Roosbroeck equations in
semi-conductor literature).

In this work we introduce and analyse a new discontinuous Galerkin
(dG) method for the drift-diffusion model:
\begin{equation}
  \label{eq:drift-diffusion-strong}
    \partial_t\ddvar = \div{\nabla\ddvar + \ddvar\nabla\elpot},
\end{equation}
where $\elpot$ is a prescribed electric potential and $\ddvar$ the
particle concentration. We are interested in this as a prototypical example of an
electrolyte model that extends to the classical (normalised) two-species PNP equations
\begin{equation}
  \label{eq:PNP-strong}
  \begin{split}
    \partial_t \rho
    &=
    \div{\nabla\rho
      +
      \rho \nabla\elpot}
    \\
    \partial_t \nu
    &=
    \div{\nabla\nu
      -
      \nu \nabla\elpot}
    \\
    - \div{\perm\nabla\elpot}
    &=
    \rho - \nu + f,
  \end{split}
\end{equation}
where $\rho$ and $\nu$, respectively, represent positively and negatively
charged particle concentrations, and the electric potential $\elpot$ is now a variable \cite{Nernst1889,Planck1890,Debye1923}. The electric permitivity is $\perm > 0$, and $f \geq 0$ is the background fixed charge density. 

The drift-diffusion equation (\ref{eq:drift-diffusion-strong}) is an advection-diffusion equation, where the advection is the gradient of the electric potential. There is a rich body of literature concerning numerical methods (finite difference, finite volume, finite element, etc.) for advection-diffusion equations \cite{roos2008robust,Hundsdorfer2003}. In view of our application to concentrations (which must remain positive to make sense physically), we highlight the recent review of finite element methods respecting the discrete maximum principle for this problem \cite{barrenechea2024finite}. The presence of boundary and interior layers present a particular challenge due to the propensity of typical finite element solutions to display spurious oscillations around (pseudo-)discontinuities. In many cases these oscillations cause the solution to become negative.

Maintaining the positivity of concentration variables in the PNP system (\ref{eq:PNP-strong}) is not only desirable for physical interpretations but is integral to the stability of the solution and the satisfaction of an associated energy decay \cite{Gajewski1985}. Boundary layers, known physically as electrical double layers, can occur close to charged surfaces due to an exponential decay in electric potential \cite{Bagotsky2005}. The associated large values of $\nabla\elpot$ lead to locally convection-dominated flow, an effect which is only exacerbated with coupling of additional equations governing fluid flow, such as in the case of the Navier--Stokes--PNP system \cite{Constantin2019}.

A piecewise linear continuous Galerkin approximation of
(\ref{eq:PNP-strong}) is shown in \cite{Prohl2009} to satisfy an
M-matrix property, which ensures the satisfaction of a discrete
maximum principle. With dG methods there is no such guarantee, even
for an interior penalty discretisation of pure diffusion
\cite{barrenechea2024finite}. For the steady-state drift-diffusion
scheme discretised with conforming piecewise linear finite elements,
in \cite{BMP89} different monotone finite element methods were
proposed, all of them sharing the aim of rewriting the popular
Sharfetter-Gummel finite volume scheme in a finite element way. The
analysis of the methods is carried out by rewriting the problem as a
mixed formulation. Some other methods, such as the dG schemes of
\cite{Liu2017a} and \cite{Liu2022}, make use of positivity-preserving
limiters to deal with this problem. Other approaches involve making a
judicious change of variables to guarantee the positivity of the
concentrations. Examples include the logarithmic change of variables
used in \cite{Metti2016, Fu2021} and the the Slotboom transformation
utilised in \cite{Slotboom1973, Xie2013}. The cost of making such
substitutions is the introduction of further non-linearities to the
already-non-linear system.  In this paper we consider dG methods,
which are well-suited to advection-dominated regimes due to the
additional stability they offer.

With the PNP system in mind, we examine in this paper two methods for
solving (\ref{eq:drift-diffusion-strong}). The first is a classical dG
formulation of the problem, and the second is a method where
positivity of the concentration is hard-coded into the finite element
space. For the classical methodology, under both the natural advection-diffusion assumption $\Delta\psi \leq 0$ and, alternatively, a time step restriction, we are able to show
well-posedness and conduct a convergence analysis, however, the
methodology does not satisfy a positivity preservation property. That
is, there is no guarantee that the solution does not become negative
within the domain.

The second method aims to rectify this. The main idea
behind the positivity enforcement is to utilise the methodology from
\cite{Barrenechea2023,Amiri2024}, where we define a closed convex
subset of the standard dG space. Using this we formulate the problem
as a variational inequality at each timestep. This guarantees that the
dG solution is positive at the Lagrange nodes of each element. We show the problem is
well-posed and examine some of the qualitative properties of the
solution. In our numerical experiments we then show that the second
method preserves nodal postivity in the context of the PNP system.

The remainder of this paper is structured as follows: in
\S\ref{sec:pde} we fix some basic notation and discuss the model
problem its properties; a temporal semi-discretisation is studied in \S\ref{sec:semi-disc} to transform the parabolic problem into a sequence of elliptic problems; in \S\ref{sec:dg-disc} and \S\ref{sec:nodal-dg-disc} we then introduce a dG spatial discretisation and a nodally bound-preserving extension; \S\ref{sec:numerics} is devoted to demonstrating the properties possessed by these discretisations through numerical experiments; and finally, \S\ref{sec:conclusions} contains
some concluding remarks.

\section{The drift-diffusion equation}
\label{sec:pde}

Let $\Omega \subset \mathbb{R}^d$, $d\leq3$ be a bounded, polytopal
domain with Lipschitz boundary $\bound$. Throughout this work we denote the
standard Lebesgue spaces by $\L{p}{\domain}$, $1\le p\le \infty$,
$\omega\subset\mathbb{R}^d$, $d=1,2,3$, with corresponding norms
$\|\cdot\|_{\L{p}{\omega}}$. The $L^2$ inner product over $\omega$ is denoted $\ip{\cdot}{\cdot}_{\omega}$, where the subscript is omitted when $\omega = \domain$. We introduce the Sobolev spaces
\cite[c.f.]{Evans2010,Renardy2004}
\begin{equation}
  \Sob{m}{p}{\domain}
  := 
  \ensemble{w\in\L{p}{\domain}}
  {\D^{\vec\alpha}w\in\L{p}{\domain}, \text{ for } \abs{\vec\alpha}\leq m},
\end{equation}
which are equipped with norms and semi-norms
\begin{gather}
  \norm{w}{\Sob{m}{p}{\domain}}^2 
  := 
  \sum_{\abs{\vec \alpha}\leq m}\norm{\D^{\vec \alpha} w}{\L{p}{\domain}}^2 
  \text{ and }
  \abs{w}_{\Sob{m}{p}{\domain}}^2 
  =
  \sum_{\abs{\vec \alpha} = m}\norm{\D^{\vec \alpha} w}{\L{p}{\domain}}^2,
\end{gather}
respectively, where $\vec\alpha = \{ \alpha_1,...,\alpha_d\}$ is a
multi-index, $\abs{\vec\alpha} = \sum_{i=1}^d\alpha_i$ and
derivatives $\D^{\vec\alpha}$ are understood in a weak sense.
We identify the Hilbertian Sobolev spaces and norms by the notation $\H{m}{\domain} := \Sob{m}{2}{\domain}$, and a zero subscript (e.g. $\Honezero{\domain}$) indicates vanishing trace on $\bound$.

We consider the drift-diffusion equation, an advection-diffusion
problem where the advection is the gradient of a potential $\elpot =
\elpot\!\left(\point, t\right)$. In the context of electrolytes, this
potential can be understood physically as the global electric
potential, and $\nabla\elpot$ is the electric field. In this viewpoint
the solution variable $\ddvar = \ddvar\!\left(\point, t\right)$
represents the density of some charged particles, with an initial
concentration profile given by $\ddvarinit$. For simplicity of
presentation, we suppose homogeneous Dirichlet boundary condition on
$\bound\times\timedom$, however, our results extend in a
straightforward fashion to more general Dirichlet conditions (see
\cite{Amiri2024,Barrenechea2023}). We explore such numerical
examples in Section \ref{sec:numerics}. Therefore, we seek $\ddvar$
satisfying
\begin{equation}
  \label{eq:IBVP}
  \begin{split}
    \partial_t\ddvar = \div{\nabla\ddvar + \ddvar\nabla\elpot} &\text{ in }\domain\times\timedom, \\
    \ddvar = 0 &\text{ on }\bound\times\timedom, \\
    \ddvar = \ddvarinit &\text{ on }\domain\times\left\{0\right\}.
  \end{split}
\end{equation}
Introducing the bilinear forms $a : \Honezero{\domain}\times\Honezero{\domain}\longrightarrow\reals$ and $b : \Ltwo{\domain}\times\Honezero{\domain}\longrightarrow\reals$, defined by
\begin{align}
  \forma{w}{\ddtest} &:= \int_\W \nabla w \cdot \nabla\ddtest \d \vec x, \\
  \formb{w}{\ddtest} &:= \int_\W \qp{w\nabla\elpot} \cdot {\nabla\ddtest} \d \vec x,
\end{align}
the initial-boundary value problem (\ref{eq:IBVP}) can be written weakly as follows. Given $\elpot\in\Sob{2}{\infty}{\domain}$ and initial data $0\leq \ddvarinit\in\L\infty{\domain}$, seek $\ddvar\!\left(t\right)\in\Honezero{\domain}$, for almost every $t\in\timedom$, such that
\begin{equation}
  \label{eq:drift-diffusion}
  \ip{\partial_t\ddvar}{\ddtest}
  +
  \forma{\ddvar}{\ddtest}
  +
  \formb{\ddvar}{\ddtest} = 0\quad\quad\forall\ddtest\in\Honezero{\domain}.
\end{equation}
\begin{remark}
  The regularity assumption $\elpot\in\Sob{2}{\infty}{\domain}$ is strong; however, much of the analysis can still be conducted under weaker assumptions. In particular, $\elpot \in \Sob{2}{p}{\domain}$ for $p > d$ ensures that $\nabla \elpot \in \Linf{\domain}$, which is sufficient to prove most of the results presented herein. To avoid technical diversions, we focus on the more regular case in this work.
\end{remark}

Standard existence and uniqueness results for advection-diffusion equations (see \cite[c.f.]{Evans2010,Renardy2004}) apply to establish the well-posedness of \eqref{eq:drift-diffusion}.
\begin{lemma}[PDE well-posedness]
  \label{lem:drift-diffusion-well-posedness}
  Let $\elpot\in\Sob{2}{\infty}{\domain}$ and $0 \leq\ddvarinit \in
  \L{\infty}{\domain}$. If $\Delta\elpot \leq 0$, then there exists a unique solution $\ddvar(t) \in
  \Bochner{L^2}{\timedom}{\Honezero{\domain}} \cap
  \Bochner{H^1}{\timedom}{\H{-1}{\domain}}$ to the weak IBVP
  \eqref{eq:drift-diffusion}.
\end{lemma}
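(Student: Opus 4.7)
My plan is to recast \eqref{eq:drift-diffusion} as an abstract linear parabolic problem with the Gelfand triple $\Honezero{\domain}\hookrightarrow\Ltwo{\domain}\hookrightarrow\H{-1}{\domain}$ governed by the time-independent bilinear form $\forma{\cdot}{\cdot}+\formb{\cdot}{\cdot}$, and then to invoke the classical existence theory for such problems (c.f.~\cite{Evans2010,Renardy2004}). This requires three ingredients: continuity of the form on $\Honezero{\domain}\times\Honezero{\domain}$, coercivity on $\Honezero{\domain}$, and an initial datum in $\Ltwo{\domain}$. Uniqueness will then follow from an energy estimate on the difference of two solutions.

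Continuity is cheap: since $\elpot\in\Sob{2}{\infty}{\domain}$ we have $\nabla\elpot\in\Linf{\domain}$, so $\abs{\formb{w}{\ddtest}}\leq\norm{\nabla\elpot}{\Linf{\domain}}\norm{w}{\Ltwo{\domain}}\abs{\ddtest}_{\H{1}{\domain}}$, with the corresponding bound on $\forma{\cdot}{\cdot}$ being standard. The nontrivial input is coercivity, and this is where the hypothesis $\Delta\elpot\leq 0$ enters. My strategy would be to test $\formb{\cdot}{\cdot}$ diagonally and integrate by parts:
\begin{equation*}
  \formb{w}{w}
  =
  \int_{\W} w\,\nabla\elpot\cdot\nabla w\d\vec x
  =
  \frac{1}{2}\int_{\W}\nabla\elpot\cdot\nabla\!\qp{w^2}\d\vec x
  =
  -\frac{1}{2}\int_{\W}\Delta\elpot\,w^2\d\vec x,
\end{equation*}
where the boundary contribution vanishes because $w\in\Honezero{\domain}$ and the final identity is legitimate since $\Delta\elpot\in\Linf{\domain}$. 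The sign hypothesis then makes $\formb{w}{w}$ non-negative, so that $\forma{w}{w}+\formb{w}{w}\geq\abs{w}_{\H{1}{\domain}}^2$, which together with the Poincar\'e inequality gives coercivity.

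With continuity and coercivity established, I would obtain existence of a unique solution in $\Bochner{L^2}{\timedom}{\Honezero{\domain}}\cap\Bochner{H^1}{\timedom}{\H{-1}{\domain}}$ via a standard Faedo--Galerkin argument: pick an orthonormal basis of $\Honezero{\domain}$ (for instance the Dirichlet Laplacian eigenfunctions), solve the resulting linear ODE system for the Galerkin coefficients, test with the approximate solution itself to obtain uniform bounds in $\Bochner{L^\infty}{\timedom}{\Ltwo{\domain}}\cap\Bochner{L^2}{\timedom}{\Honezero{\domain}}$, bound the time derivative in $\Bochner{L^2}{\timedom}{\H{-1}{\domain}}$ directly from the weak form, and pass to the limit using Banach--Alaoglu together with the Aubin--Lions compactness lemma. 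The initial datum $\ddvarinit\in\Linf{\domain}\subset\Ltwo{\domain}$ is recovered from the fact that the limit admits a continuous representative from $\overline{\timedom}$ into $\Ltwo{\domain}$. For uniqueness I would test the equation for $w:=\ddvar_1-\ddvar_2$ with $w$ itself, obtaining $\tfrac{1}{2}\tfrac{d}{dt}\norm{w}{\Ltwo{\domain}}^2+\forma{w}{w}+\formb{w}{w}=0$, so that $\norm{w\qp{t}}{\Ltwo{\domain}}$ is non-increasing and hence vanishes identically.

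The one step that is not purely mechanical is the sign argument for $\formb{w}{w}$: without the assumption $\Delta\elpot\leq 0$ one can only bound $\abs{\formb{w}{w}}\leq\tfrac{1}{2}\norm{\Delta\elpot}{\Linf{\domain}}\norm{w}{\Ltwo{\domain}}^2$ and thereby obtain a G{\aa}rding-type inequality, giving well-posedness via Gr\"onwall at the cost of an exponential factor in $T$. The hypothesis $\Delta\elpot\leq 0$ therefore promotes the drift-diffusion operator to a coercive perturbation of the Laplacian in the strict sense, and is exactly the structural feature one expects to need to carry the genuine energy estimate through to the discrete schemes developed later in the paper.
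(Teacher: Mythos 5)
Your proposal is correct and follows essentially the route the paper itself takes: the paper offers no written proof of this lemma, simply invoking the standard parabolic existence--uniqueness theory of \cite{Evans2010,Renardy2004}, which is precisely the Lions/Faedo--Galerkin machinery you spell out, and your integration-by-parts identity $\formb{w}{w}=-\tfrac12\ip{\Delta\elpot}{w^2}$ is exactly the paper's Lemma \ref{lem:Laplacian-sign}. The only cosmetic remark is that for this linear problem weak convergence of the Galerkin approximations already suffices to pass to the limit, so the appeal to Aubin--Lions is unnecessary (though harmless), and, as you note yourself, a G\aa rding inequality would give well-posedness even without $\Delta\elpot\leq 0$.
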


The problem (\ref{eq:drift-diffusion}) satisfies a stability result and a parabolic maximum principle, which we now detail. The preservation of these properties at the discrete level is the focus of the following sections. We begin with a useful lemma.
\begin{lemma}[Energy identity]
  \label{lem:Laplacian-sign}
  Let $w\in\Honezero{\domain}$ and
  $\elpot\in\Sob{2}{\infty}{\domain}$. Then
  \begin{equation}
    \forma{w}{w}
    +
    \formb{w}{w}
    =
    \norm{\nabla w}{\Ltwo{\domain}}^2
    -
    \half\ip{\Delta\elpot}{w^2}.
  \end{equation}
\end{lemma}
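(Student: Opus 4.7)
The plan is straightforward: compute $a(w,w)$ directly from its definition, apply the chain rule to rewrite the symmetric part of the advection form $b(w,w)$ as an integral involving $\nabla(w^2)$, and then integrate by parts to transfer a derivative onto $\elpot$, using the homogeneous boundary condition on $w$ to kill the boundary term.

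More concretely, I would first observe that $a(w,w) = \int_\Omega |\nabla w|^2 \, \d\vec x = \norm{\nabla w}{\Ltwo{\domain}}^2$ directly. For $b(w,w)$, the key identity is the chain rule $\nabla(w^2) = 2w\nabla w$, which is valid for $w\in\Honezero{\domain}$ and yields
\begin{equation*}
  \formb{w}{w} = \int_\Omega w \nabla\elpot \cdot \nabla w \, \d\vec x = \frac{1}{2}\int_\Omega \nabla\elpot \cdot \nabla(w^2)\, \d\vec x.
\end{equation*}
I would then apply Green's identity to the right-hand side: since $\elpot\in\Sob{2}{\infty}{\domain}$, we have $\nabla\elpot\in\sob{1}{\infty}$ and $\Delta\elpot\in\Linf{\domain}$, while $w^2\in\sob{1}{1}$ with vanishing trace on $\bound$ (because $w$ does). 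Hence the boundary contribution vanishes and
\begin{equation*}
  \frac{1}{2}\int_\Omega \nabla\elpot \cdot \nabla(w^2)\, \d\vec x = -\frac{1}{2}\ip{\Delta\elpot}{w^2}.
\end{equation*}
Adding the two contributions gives the claimed identity.

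The only point requiring care is justifying the integration by parts at this low regularity: $w$ is merely in $\Honezero{\domain}$, so $w^2$ need not lie in $\H{1}{\domain}$ without a Sobolev embedding (in $d\leq 3$ we have $\H{1}\hookrightarrow \L{6}{}$, so in fact $w^2\in\H{1}{\domain}$). One could equally argue by density, approximating $w$ by smooth compactly supported functions in the $\H{1}{\domain}$ norm and passing to the limit using the $\Linf{\domain}$ bounds on $\nabla\elpot$ and $\Delta\elpot$. Neither route is delicate, so the lemma follows routinely once the chain-rule rewriting has been spotted; that rewriting is the only nontrivial step.
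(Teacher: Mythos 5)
Your proposal is correct and follows essentially the same route as the paper: the paper integrates $\formb{w}{w}$ by parts and then uses the product rule and a rearrangement, which is just the identity $2w\nabla w = \nabla\qp{w^2}$ in a slightly different order, together with $\forma{w}{w} = \norm{\nabla w}{\Ltwo{\domain}}^2$. Your additional remarks on justifying the integration by parts at the $\Honezero{\domain}$ regularity level (via the embedding $\sobh{1}_0(\domain)\hookrightarrow\L{6}{\domain}$ or a density argument) go beyond what the paper records but do not change the argument.
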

\begin{proof}
  Using the definition of $\formb{\cdot}{\cdot}$ and integrating by parts yields
  \begin{align}
    \formb{w}{w}
    &=
    \ip{w\nabla\elpot}{\nabla w} \\
    &=
    -\ip{\div{w\nabla\elpot}}{w}.
  \end{align}
  Using the product rule and rearranging we obtain
  \begin{equation}
    \formb{w}{w} = -\half\ip{\Delta\elpot}{w^2}.
  \end{equation}
  The result then follows from the definition of $\forma{\cdot}{\cdot}$.
\end{proof}

\begin{lemma}[Stability]
  \label{lem:drift-diffusion-pde-energy}
  For almost every $t\in\timedom$, let
  $\ddvar\!\left(t\right)\in\Honezero{\domain}$ solve
  (\ref{eq:drift-diffusion}), with $\elpot\in\Sob{2}{\infty}{\domain}$ and
  $0 \leq \ddvarinit\in\L{\infty}{\domain}$. Then we have
  \begin{align}
    \frac{d}{dt}\left[\half\norm{\ddvar}{\Ltwo{\domain}}^2\right] = -\norm{\nabla \ddvar}{\Ltwo{\domain}}^2 + \half\ip{\Delta\elpot}{\ddvar^2}.
  \end{align}
  The right hand side of the above equality is non-positive if $\Delta\elpot \leq 0$.
\end{lemma}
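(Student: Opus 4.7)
The plan is to test the weak formulation \eqref{eq:drift-diffusion} against $\ddtest = \ddvar(t)$, which is admissible since by hypothesis $\ddvar(t) \in \Honezero{\domain}$ for a.e. $t$. Substituting yields
\begin{equation*}
  \ip{\partial_t \ddvar}{\ddvar} + \forma{\ddvar}{\ddvar} + \formb{\ddvar}{\ddvar} = 0.
\end{equation*}
The two bilinear terms can be collapsed immediately by invoking the energy identity of Lemma \ref{lem:Laplacian-sign} with $w = \ddvar(t)$, producing $\norm{\nabla\ddvar}{\Ltwo{\domain}}^2 - \tfrac{1}{2}\ip{\Delta\elpot}{\ddvar^2}$.

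For the remaining time-derivative term I would appeal to the standard Bochner-space result (e.g.\ \cite[c.f.]{Evans2010,Renardy2004}): since the regularity supplied by Lemma \ref{lem:drift-diffusion-well-posedness} gives $\ddvar \in \Bochner{L^2}{\timedom}{\Honezero{\domain}}$ and $\partial_t \ddvar \in \Bochner{L^2}{\timedom}{\H{-1}{\domain}}$, the map $t \mapsto \tfrac{1}{2}\norm{\ddvar(t)}{\Ltwo{\domain}}^2$ is absolutely continuous and
\begin{equation*}
  \tfrac{\mathrm d}{\mathrm d t}\left[\tfrac{1}{2}\norm{\ddvar}{\Ltwo{\domain}}^2\right] = \langle \partial_t \ddvar, \ddvar \rangle_{\H{-1}{\domain},\Honezero{\domain}},
\end{equation*}
with the duality pairing reducing to the $\Ltwo{\domain}$ inner product whenever $\partial_t \ddvar(t) \in \Ltwo{\domain}$. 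Rearranging the tested equation then produces the claimed identity.

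The final clause is immediate: if $\Delta\elpot \leq 0$ pointwise a.e., then since $\ddvar^2 \geq 0$ the integrand $\Delta\elpot \cdot \ddvar^2$ is non-positive, and the gradient term is manifestly non-positive, so the right-hand side is non-positive. The only genuinely non-routine ingredient is the justification of the chain rule for $\tfrac{1}{2}\norm{\ddvar(t)}{\Ltwo{\domain}}^2$; everything else is a direct reuse of Lemma \ref{lem:Laplacian-sign}.
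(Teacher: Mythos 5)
Your proposal is correct and follows essentially the same route as the paper: test \eqref{eq:drift-diffusion} with $\ddtest = \ddvar$ and collapse the bilinear terms via Lemma \ref{lem:Laplacian-sign}. The only difference is that you spell out the Bochner-space chain-rule justification for $\frac{d}{dt}\left[\half\norm{\ddvar}{\Ltwo{\domain}}^2\right] = \ip{\partial_t\ddvar}{\ddvar}$, which the paper uses implicitly.
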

\begin{proof}
  Choosing $\ddtest = \ddvar$ in equation (\ref{eq:drift-diffusion}), we have
  \begin{equation}
    \frac{d}{dt}\left[\half\norm{\ddvar}{}^2\right] = \ip{\partial_t\ddvar}{\ddvar} = -\forma{\ddvar}{\ddvar} - \formb{\ddvar}{\ddvar},
  \end{equation}
  and applying Lemma \ref{lem:Laplacian-sign} completes the proof.
\end{proof}

The following result is a direct application of the parabolic maximum principle applied to the drift-diffusion equation \eqref{eq:IBVP}, under the assumption $\Delta\elpot \leq 0$.

\begin{lemma}[Parabolic Maximum Principle {\cite[Theorem 4.26]{Renardy2004}}]
  \label{lem:parabolic-maximum-principle}
  Let $\ddvar\in\Bochner{C}{\timedom}{\Honezero{\domain}}$ be a solution of the initial-boundary value problem (\ref{eq:IBVP}) with $\elpot\in\Sob{2}{\infty}{\domain}$ and $0 \leq \ddvarinit\in\Linf{\domain}$. Suppose $\Delta\elpot \leq 0$ in $\domain$. Then, for almost every $t\in\timedom$, the solution satisfies the maximum principle
  \begin{equation}
    0 \leq \inf_{\domain}\ddvarinit
    \leq
    \inf_{\domain}\ddvar\!\left(t\right)
    \leq
    \sup_{\domain}\ddvar\!\left(t\right)
    \leq \sup_{\domain}\ddvarinit.
  \end{equation}
\end{lemma}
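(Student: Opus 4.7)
The plan is to reduce the statement to a textbook weak parabolic maximum principle after rewriting the equation in non-divergence form, and alternatively to sketch a self-contained energy argument using truncated test functions in the style of Lemma~\ref{lem:Laplacian-sign}.

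First I would expand the divergence in \eqref{eq:IBVP} using the product rule, obtaining the non-divergence form
\begin{equation*}
  \partial_t \ddvar - \Delta \ddvar - \nabla\elpot\cdot\nabla\ddvar - (\Delta\elpot)\ddvar = 0.
\end{equation*}
Viewed this way, $\ddvar$ satisfies a linear parabolic equation whose zeroth-order coefficient $c := -\Delta\elpot$ is non-negative under the assumption $\Delta\elpot \leq 0$, and whose drift $-\nabla\elpot$ is bounded by $\elpot\in\Sob{2}{\infty}{\domain}$. The cited weak parabolic maximum principle \cite[Theorem~4.26]{Renardy2004} then applies directly and yields both the lower and upper bounds in the statement, once combined with the initial and boundary data $0\leq\ddvarinit$ and $\ddvar|_{\bound}=0$.

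For a self-contained proof I would derive the two-sided bound by energy testing with Stampacchia-type truncations. For the lower bound $\ddvar\geq 0$, I would insert $\ddtest = \ddvar^-\in\Honezero{\domain}$ (admissible since $\ddvar=0$ on $\bound$) into \eqref{eq:drift-diffusion}, use $\nabla\ddvar\cdot\nabla\ddvar^- = -|\nabla\ddvar^-|^2$ and reproduce the computation of Lemma~\ref{lem:Laplacian-sign} on the set $\{\ddvar<0\}$ to obtain
\begin{equation*}
  \tfrac{1}{2}\tfrac{d}{dt}\|\ddvar^-\|_{\Ltwo{\domain}}^2 + \|\nabla\ddvar^-\|_{\Ltwo{\domain}}^2 = \tfrac{1}{2}\ip{\Delta\elpot}{(\ddvar^-)^2} \leq 0.
\end{equation*}
Since $\ddvar^-(0)=0$ by the assumption $\ddvarinit\geq 0$, integrating in time gives $\ddvar^-\equiv 0$. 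For the upper bound $\ddvar\leq M := \sup_{\domain}\ddvarinit$, set $w:=\ddvar - M$; because $\ddvar = 0 \leq M$ on $\bound$, $w^+\in\Honezero{\domain}$ is an admissible test function. Splitting $\formb{\ddvar}{w^+} = \formb{w}{w^+} + M\formb{1}{w^+}$ and integrating the second piece by parts (using $w^+|_\bound = 0$) leads to
\begin{equation*}
  \tfrac{1}{2}\tfrac{d}{dt}\|w^+\|_{\Ltwo{\domain}}^2 + \|\nabla w^+\|_{\Ltwo{\domain}}^2 = \tfrac{1}{2}\ip{\Delta\elpot}{(w^+)^2} + M\ip{\Delta\elpot}{w^+} \leq 0,
\end{equation*}
with both right-hand terms non-positive by the hypothesis and $w^+,M\geq 0$. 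Since $w^+(0)=0$ by the definition of $M$, we conclude $w^+\equiv 0$, i.e.\ $\ddvar\leq M$.

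The main (and only real) obstacle is the technical justification that the truncations $\ddvar^-,(\ddvar-M)^+$ belong to $\Honezero{\domain}$ and may legitimately be used as test functions for the weak solution class stated in Lemma~\ref{lem:drift-diffusion-well-posedness}; this is the classical Stampacchia chain rule for Sobolev functions, and the integration-by-parts manipulations involving $\Delta\elpot$ are permitted by $\elpot\in\Sob{2}{\infty}{\domain}$. The remaining middle inequalities $\inf\ddvarinit \leq \inf\ddvar(t)$ and $\inf\ddvar(t)\leq\sup\ddvar(t)$ are immediate: the latter is trivial, and the former, together with the prefixed $0\leq\inf\ddvarinit$, follows from the hypothesis on the initial datum combined with $\ddvar\geq 0$ just established.
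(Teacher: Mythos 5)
Your proposal is correct. The paper gives no separate proof of this lemma --- it is stated as a direct application of \cite[Theorem 4.26]{Renardy2004} --- and your first paragraph is precisely that reduction: expanding the divergence puts the equation in non-divergence form with drift $-\nabla\elpot$ and zeroth-order coefficient $-\Delta\elpot \geq 0$, so the cited weak parabolic maximum principle applies with the stated boundary and initial data. Your additional Stampacchia-truncation argument is a sound self-contained alternative, and it is in fact the continuous-level analogue of the argument the paper uses for the semi-discrete problem (Lemma \ref{lem:semi-discrete-maximum-principle}), where the negative part and the shifted positive part $\left(u^n - \|u^{0}\|_{L^\infty(\domain)}\right)_+$ play exactly the roles of your $\ddvar^-$ and $(\ddvar - M)^+$; the computations check out, including the sign of the extra term $M\ip{\Delta\elpot}{w^+}$, which is non-positive since $\Delta\elpot \leq 0$, $M \geq 0$ and $w^+ \geq 0$. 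One small caveat, inherited from the statement rather than from your proof: since $\ddvar(t)\in\Honezero{\domain}$, the inequality $\inf_{\domain}\ddvarinit \leq \inf_{\domain}\ddvar(t)$ carries content only when $\inf_{\domain}\ddvarinit = 0$; what either route actually delivers is $0 \leq \ddvar(t) \leq \sup_{\domain}\ddvarinit$, which is the meaningful part of the chain, so your closing remark that the remaining inequalities are immediate should be read in that light.
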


\section{A temporal semi-discretisation}
\label{sec:semi-disc}

In this section we examine an implicit backward Euler discretisation
of the problem (\ref{eq:drift-diffusion}) and some of the properties
inherited by this discretisation. We semi-discretise in time, and
hence the parabolic problem is transformed into a sequence of elliptic
problems.

For $N\in\mathbb{N}$, we define the set of discrete time steps
$\left\{t^0, \ldots, t^N\right\}$, with $0 = t^0 < t^1 < \ldots < t^N
= T$ and denote the time step size by $\timestep$. A variable $w$ at
time step $t^n$ is denoted $w^n := w\left(t^n\right)$. Our arguments
work with variable $\timestep$, but for simplicity of presentation we
keep it fixed.

With the above notations (\ref{eq:IBVP}) can be presented as a sequence of elliptic
problems. Let $u^0 := u_0\in\L{\infty}{\domain}$, with $u_0 \geq 0$, and
$\elpot\in\Sob{2}{\infty}{\domain}$. For $n = 1, 2, \dots, N$ find
$u^n\in\Honezero{\domain}$ such that
\begin{equation}
  \label{eq:dd-semi}
  \cA(u^n, v)
  :=
  \ip{u^n}{v}
  +
  \tau
  \qp{
    a(u^n, v) + b(u^n, v)
  }
  =
  \ip{u^{n-1}}{v} \Foreach v\in \Honezero{\domain}.
\end{equation}

We now show that if $\Delta\elpot \leq 0$, i.e., under the same
assumption as Lemma \ref{lem:drift-diffusion-well-posedness}, then
(\ref{eq:dd-semi}) is well-posed. In addition, there is a time step
condition which, if satisfied, guarantees well-posedness for any
$\elpot\in\Sob{2}{\infty}{\domain}$. The proof is based on the following
coercivity and boundedness results regarding $\cA(\cdot, \cdot)$.

\begin{lemma}[Semi-discrete coercivity]
  \label{lem:time-disc-coerc}
  Let $w\in\Honezero{\domain}$ and $\elpot\in\Sob{2}{\infty}{\domain}$. If $\Delta\elpot \leq 0$, then
  \begin{equation}
    \label{eq:ass-pot}
    \cA(w, w)
    \geq
    \norm{w}{\Ltwo{\domain}}^2
    +
    \tau \norm{\nabla w}{\Ltwo{\domain}}^2.
  \end{equation}
  Alternatively, removing the assumption of $\Delta \psi$, suppose
  instead that
  \begin{equation}
    \label{eq:timestep-assumption}
    \timestep < \frac 4 {\constSob^2 \norm{\Delta \psi}{\L{3}{\domain}}^2}.
  \end{equation}
  where $\constSob$ is the constant associated with the Sobolev embedding $\sobh{1}_0(\domain)\hookrightarrow\L{6}{\domain}$, which holds for $d \leq 3$. Then
  \begin{equation}
    \label{eq:ass-timestep}
    \cA(w, w)
    \geq
    \frac 12
    \norm{w}{\Ltwo{\domain}}^2
    +
    \frac \tau 2 \norm{\nabla w}{\Ltwo{\domain}}^2.
  \end{equation}
\end{lemma}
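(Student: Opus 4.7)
The natural starting point is the energy identity of Lemma \ref{lem:Laplacian-sign}, which rewrites the stiffness contribution $a(w,w)+b(w,w)$ as $\|\nabla w\|_{\Ltwo{\domain}}^2 - \tfrac12\ip{\Delta\elpot}{w^2}$. Substituting into the definition of $\cA(\cdot,\cdot)$ gives
\begin{equation*}
  \cA(w,w) = \norm{w}{\Ltwo{\domain}}^2 + \tau \norm{\nabla w}{\Ltwo{\domain}}^2 - \frac{\tau}{2}\ip{\Delta\elpot}{w^2},
\end{equation*}
which will be the working identity for both parts.

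\textbf{Part (i): sign assumption on $\Delta\elpot$.} This case is essentially a one-liner. Since $w^2 \geq 0$ pointwise and $\Delta\elpot \leq 0$ by hypothesis, the remainder term satisfies $-\tfrac{\tau}{2}\ip{\Delta\elpot}{w^2} \geq 0$, so we drop it and read off \eqref{eq:ass-pot} directly.

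\textbf{Part (ii): time step condition.} Here the strategy is to absorb the indefinite term $\tfrac{\tau}{2}|\ip{\Delta\elpot}{w^2}|$ into the two positive quantities on the right of the working identity. I would combine H\"older's inequality (with the triple $\tfrac13+\tfrac13+\tfrac13=1$) and the Sobolev embedding $\sobh{1}_0(\domain)\hookrightarrow\L{6}{\domain}$ to obtain
\begin{equation*}
  |\ip{\Delta\elpot}{w^2}| \leq \norm{\Delta\elpot}{\L{3}{\domain}} \norm{w}{\L{3}{\domain}}^2 \leq \constSob\, \norm{\Delta\elpot}{\L{3}{\domain}} \norm{w}{\Ltwo{\domain}} \norm{\nabla w}{\Ltwo{\domain}},
\end{equation*}
where I have used the standard interpolation $\norm{w}{\L{3}{\domain}}^2 \leq \norm{w}{\Ltwo{\domain}} \norm{w}{\L{6}{\domain}}$ coming from $\tfrac13 = \tfrac12\cdot\tfrac12+\tfrac12\cdot\tfrac16$. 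A Young inequality $\alpha\beta \leq \tfrac{\alpha^2}{2\varepsilon}+\tfrac{\varepsilon \beta^2}{2}$ with a parameter $\varepsilon>0$ to be selected then yields
\begin{equation*}
  \frac{\tau}{2}|\ip{\Delta\elpot}{w^2}| \leq \frac{\tau \constSob \norm{\Delta\elpot}{\L{3}{\domain}}}{4\varepsilon} \norm{w}{\Ltwo{\domain}}^2 + \frac{\tau \varepsilon \constSob \norm{\Delta\elpot}{\L{3}{\domain}}}{4} \norm{\nabla w}{\Ltwo{\domain}}^2.
\end{equation*}

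\textbf{Choice of parameters and main obstacle.} The only real work is to choose $\varepsilon$ so that both coefficients on the right are simultaneously bounded by $\tfrac12$ and $\tfrac{\tau}{2}$ respectively. Requiring $\varepsilon \constSob \norm{\Delta\elpot}{\L{3}{\domain}} \leq 2$ controls the gradient term, and the sharp choice $\varepsilon = 2/(\constSob \norm{\Delta\elpot}{\L{3}{\domain}})$ reduces the condition on the mass term to $\tau \constSob^2 \norm{\Delta\elpot}{\L{3}{\domain}}^2 \leq 4$, which is exactly \eqref{eq:timestep-assumption}. Subtracting the absorbed quantities from the working identity gives \eqref{eq:ass-timestep}. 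The only delicate point is balancing the two inequalities so that the resulting restriction matches the stated constant $4/(\constSob^2\norm{\Delta\elpot}{\L{3}{\domain}}^2)$; everything else is routine.
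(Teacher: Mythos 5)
Your proposal is correct and follows essentially the same route as the paper: the energy identity of Lemma \ref{lem:Laplacian-sign}, then H\"older, the Sobolev embedding $\sobh{1}_0(\domain)\hookrightarrow\L{6}{\domain}$, and Young's inequality to absorb the $\Delta\elpot$ term under the time step restriction. The only cosmetic differences are your $(3,3,3)$ H\"older split plus $\L{2}$--$\L{6}$ interpolation in place of the paper's direct $(3,6,2)$ split, and your Young balancing, which places the exact factor $\tfrac{\tau}{2}$ on the gradient term rather than the exact $\tfrac12$ on the mass term; both yield \eqref{eq:ass-timestep} under \eqref{eq:timestep-assumption}.
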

\begin{proof}
  From the definition of $\cA(\cdot, \cdot)$ and Lemma \ref{lem:Laplacian-sign}, we have
  \begin{align}
    \cA(w, w) &= \norm{w}{\Ltwo{\domain}}^2 + \timestep\forma{w}{w} + \timestep\formb{w}{w} \\
              &= \norm{w}{\Ltwo{\domain}}^2 + \timestep\norm{\nabla w}{\Ltwo{\domain}}^2 - \frac{\timestep}{2}\ip{\Delta\elpot}{w^2}.
  \end{align}
  If $\Delta\elpot \leq 0$, then (\ref{eq:ass-pot})
  follows. Otherwise, we invoke the Sobolev embedding
  $\sobh{1}_0(\domain)\hookrightarrow\L{6}{\domain}$, for $d \leq 3$,
  and use H\"older's inequality to see that
  \begin{align}
    \frac{\timestep}{2}\ip{\Delta\elpot}{w^2} &\leq \frac{\timestep}{2}\norm{\Delta\elpot}{\L{3}{\domain}}\norm{w}{\L{6}{\domain}}\norm{w}{\Ltwo{\domain}} \label{eq:Laplacian-estimate-start} \\
    &\leq \constSob\frac{\timestep}{2}\norm{\Delta\elpot}{\L{3}{\domain}}\norm{\nabla w}{\Ltwo{\domain}}\norm{w}{\Ltwo{\domain}}.
  \end{align}
  Young's inequality then gives
  \begin{equation}
    \label{eq:Laplacian-estimate-end}
    \frac{\timestep}{2}\ip{\Delta\elpot}{w^2} \leq \constSob^2\frac{\timestep^2}{8}\norm{\Delta\elpot}{\L{3}{\domain}}^2\norm{\nabla w}{\Ltwo{\domain}}^2 + \frac 12 \norm{w}{\Ltwo{\domain}}^2,
  \end{equation}
  and therefore
  \begin{equation}
    \cA(w, w) \geq \frac 12\norm{w}{\Ltwo{\domain}}^2 + \timestep\left(1 - \constSob^2\frac{\timestep}{8}\norm{\Delta\elpot}{\L{3}{\domain}}^2\right)\norm{\nabla w}{\Ltwo{\domain}}^2.
  \end{equation}
  The result in (\ref{eq:ass-timestep}) is then a consequence of the time step restriction (\ref{eq:timestep-assumption}).
\end{proof}

\begin{lemma}[Boundedness of $\cA(\cdot, \cdot)$]
  \label{lem:dd-semi-boundedness}
  Let $w, v\in\Honezero{\domain}$ and $\elpot\in\Sob{2}{\infty}{\domain}$. Then
  \begin{equation}
    \cA(w, v) \leq \left(1 + C_P\norm{\nabla\elpot}{\L{\infty}{\domain}}\right)\left(\norm{w}{\Ltwo{\domain}}^2 + \timestep\norm{\nabla w}{\Ltwo{\domain}}^2\right)^{1/2}\left(\norm{v}{\Ltwo{\domain}}^2 + \timestep\norm{\nabla v}{\Ltwo{\domain}}^2\right)^{1/2},
  \end{equation}
  where $C_P$ is the Poincar\'e constant.
\end{lemma}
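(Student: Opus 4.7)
The plan is to expand $\cA(w,v)=\ip{w}{v}+\tau a(w,v)+\tau b(w,v)$ and bound each of the three terms by a factor of the energy-like quantity $X_w:=\bigl(\norm{w}{\Ltwo{\domain}}^2+\tau\norm{\nabla w}{\Ltwo{\domain}}^2\bigr)^{1/2}$ and its analogue $X_v$, taking care to distribute the $\tau$ weights so that the Cauchy--Schwarz step at the end can be made sharp.

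First I would handle the $L^2$ inner product and the diffusion form by plain Cauchy--Schwarz: $\ip{w}{v}\leq\norm{w}{\Ltwo{\domain}}\norm{v}{\Ltwo{\domain}}$ and $\tau\,a(w,v)=\tau\ip{\nabla w}{\nabla v}\leq \tau^{1/2}\norm{\nabla w}{\Ltwo{\domain}}\cdot\tau^{1/2}\norm{\nabla v}{\Ltwo{\domain}}$. The key observation is then to apply the vector Cauchy--Schwarz inequality in $\mathbb{R}^2$ to these two contributions, so that together they are bounded by $X_w X_v$ rather than $2X_w X_v$; this is what produces the clean constant $1$ in front of the first term of the stated bound.

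For the convective form, I would pull the electric field out in $L^\infty$, use Cauchy--Schwarz, and then apply Poincar\'e's inequality on $w\in\Honezero{\domain}$ to trade the $L^2$ norm of $w$ for its gradient norm:
\begin{equation*}
\tau\,b(w,v)=\tau\ip{w\nabla\elpot}{\nabla v}\leq\tau\norm{\nabla\elpot}{\Linf{\domain}}\norm{w}{\Ltwo{\domain}}\norm{\nabla v}{\Ltwo{\domain}}\leq C_P\norm{\nabla\elpot}{\Linf{\domain}}\,\tau^{1/2}\norm{\nabla w}{\Ltwo{\domain}}\cdot\tau^{1/2}\norm{\nabla v}{\Ltwo{\domain}}.
\end{equation*}
This contribution is itself bounded by $C_P\norm{\nabla\elpot}{\Linf{\domain}}X_w X_v$, and adding it to the previous estimate yields the claimed inequality.

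There is no real obstacle here, beyond the bookkeeping of $\tau$ factors; the only non-automatic step is noticing that applying a single vector Cauchy--Schwarz to the $\ip{w}{v}$ and $a(w,v)$ contributions (rather than bounding them separately) is what shaves the leading constant from $2$ down to $1$, matching the form of the bound as stated.
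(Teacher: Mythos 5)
Your proof is correct and follows essentially the same route as the paper: expand $\cA(w,v)$ into its three contributions, apply Cauchy--Schwarz (with $\nabla\elpot$ taken out in $\L{\infty}{\domain}$ and Poincar\'e applied to $w$ for the convective term), and finish with the discrete two-term Cauchy--Schwarz inequality. The only cosmetic difference is that the paper factors $\left(1 + C_P\norm{\nabla\elpot}{\L{\infty}{\domain}}\right)$ out of both terms before the final discrete Cauchy--Schwarz step, whereas you bound the convective contribution separately by $C_P\norm{\nabla\elpot}{\L{\infty}{\domain}}X_wX_v$; both yield the stated constant.
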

\begin{proof}
  Applying the H\"older and Poincar\'e inequalities to the definition of $\cA(\cdot, \cdot)$ gives
  \begin{align}
    \cA(w, v) &= \ip{w}{v} + \timestep\forma{w}{v} + \timestep\formb{w}{v} \\
              &\leq \norm{w}{\Ltwo{\domain}}\norm{v}{\Ltwo{\domain}} + \timestep\left(1 + C_P\norm{\nabla\elpot}{\L{\infty}{\domain}}\right)\norm{\nabla w}{\Ltwo{\domain}}\norm{\nabla v}{\Ltwo{\domain}} \\
              &\leq \left(1 + C_P\norm{\nabla\elpot}{\L{\infty}{\domain}}\right)\left(\norm{w}{\Ltwo{\domain}}\norm{v}{\Ltwo{\domain}} + \timestep\norm{\nabla w}{\Ltwo{\domain}}\norm{\nabla v}{\Ltwo{\domain}}\right). \label{eq:time-disc-boundedness-eq}
  \end{align}
  and using the discrete Cauchy-Schwarz inequality then yields the result.
\end{proof}

Combining Lemma \ref{lem:time-disc-coerc} and Lemma
\ref{lem:dd-semi-boundedness}, the Lax-Milgram Lemma then yields the
following result:
\begin{corollary}[Existence and uniqueness of semi-discrete solution]
  Under the conditions of Lemma
  \ref{lem:time-disc-coerc}, for $n = 1, 2,\ldots, N$, there exists a unique $u^n\in\Honezero{\domain}$
  solving (\ref{eq:dd-semi}).
\end{corollary}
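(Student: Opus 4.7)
The plan is to apply the Lax--Milgram Lemma on the Hilbert space $\Honezero{\domain}$ equipped with the parameter-dependent inner product inducing the norm
\begin{equation}
  \tripleb{w} := \qp{\norm{w}{\Ltwo{\domain}}^2 + \timestep\norm{\nabla w}{\Ltwo{\domain}}^2}^{1/2},
\end{equation}
which, for fixed $\timestep > 0$, is equivalent to the standard $\sobh{1}$-norm on $\Honezero{\domain}$. I would then argue inductively on $n$: the base case uses $u^0 = u_0 \in \Linf{\domain} \hookrightarrow \Ltwo{\domain}$, and for the inductive step the previously constructed $u^{n-1} \in \Honezero{\domain} \subset \Ltwo{\domain}$ provides the data for the next elliptic problem.

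At each step, the bilinear form $\cA(\cdot,\cdot)$ is coercive on $(\Honezero{\domain}, \tripleb{\cdot})$ by Lemma \ref{lem:time-disc-coerc}: under either the sign assumption $\Delta\elpot \leq 0$ or the timestep restriction \eqref{eq:timestep-assumption}, one obtains $\cA(w,w) \geq \tfrac{1}{2}\tripleb{w}^2$. Boundedness in the same norm is immediate from Lemma \ref{lem:dd-semi-boundedness}. For the right-hand side, the map $v \mapsto \ip{u^{n-1}}{v}$ is a linear functional on $\Honezero{\domain}$, and by Cauchy--Schwarz
\begin{equation}
  \abs{\ip{u^{n-1}}{v}} \leq \norm{u^{n-1}}{\Ltwo{\domain}}\norm{v}{\Ltwo{\domain}} \leq \norm{u^{n-1}}{\Ltwo{\domain}}\tripleb{v},
\end{equation}
so it is bounded with respect to $\tripleb{\cdot}$. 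The Lax--Milgram Lemma then yields a unique $u^n \in \Honezero{\domain}$ solving \eqref{eq:dd-semi}.

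There is no real obstacle here, since the two preceding lemmas have already done the substantive work. The only points that require a brief comment are the equivalence of $\tripleb{\cdot}$ with the $\sobh{1}$-norm (so that completeness of the underlying space is preserved), and the verification that the data $u^{n-1}$ produced by the induction indeed lies in $\Ltwo{\domain}$, which is automatic since $\Honezero{\domain} \hookrightarrow \Ltwo{\domain}$ and $u^0 \in \Linf{\domain} \hookrightarrow \Ltwo{\domain}$ on the bounded domain $\domain$.
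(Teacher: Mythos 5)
Your argument is exactly the paper's: the corollary there is obtained by combining the coercivity of Lemma \ref{lem:time-disc-coerc} and the boundedness of Lemma \ref{lem:dd-semi-boundedness} and invoking the Lax--Milgram Lemma, which is precisely what you do. Your additional remarks (equivalence of the $\tau$-weighted norm with the $\sobh{1}$-norm, boundedness of the right-hand-side functional, and the induction supplying $u^{n-1}\in\Ltwo{\domain}$) are correct and simply make explicit details the paper leaves implicit.
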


The temporal semi-discretisation (\ref{eq:dd-semi}) satisfies the
following stability and maximum principle results, which are
semi-discrete analogues of the ones presented in Lemmata
\ref{lem:drift-diffusion-pde-energy} and
\ref{lem:parabolic-maximum-principle}, respectively.
\begin{lemma}[Semi-discrete stability]
 Let the conditions of Lemma \ref{lem:time-disc-coerc} be satisfied, with $\Delta\elpot \leq 0$, and let $u^n\in\Honezero{\domain}$ solve (\ref{eq:dd-semi}), for $n = 1, 2,\ldots, N$. Then
  \begin{equation}
    \half\norm{u^n}{\Ltwo{\domain}}^2
    \leq
    \half\norm{u^{n-1}}{\Ltwo{\domain}}^2    
    -
    \timestep\norm{\nabla u^n}{\Ltwo{\domain}}^2.
  \end{equation}
\end{lemma}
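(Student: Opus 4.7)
The plan is to test the semi-discrete equation (\ref{eq:dd-semi}) with $v = u^n$, isolate the elliptic contribution via Lemma \ref{lem:Laplacian-sign}, and then bound the residual cross term on the right-hand side by Young's inequality (or equivalently the polarisation identity for the backward-Euler mass term).

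First I would set $\ddtest = u^n$ in (\ref{eq:dd-semi}), which yields
\begin{equation*}
  \ip{u^n}{u^n} + \timestep\qp{\forma{u^n}{u^n} + \formb{u^n}{u^n}} = \ip{u^{n-1}}{u^n}.
\end{equation*}
Applying Lemma \ref{lem:Laplacian-sign} with $w = u^n$ gives
\begin{equation*}
  \forma{u^n}{u^n} + \formb{u^n}{u^n} = \norm{\nabla u^n}{\Ltwo{\domain}}^2 - \half\ip{\Delta\elpot}{(u^n)^2},
\end{equation*}
and the sign assumption $\Delta\elpot \leq 0$ together with $(u^n)^2 \geq 0$ allows us to discard the second term, producing the lower bound $\forma{u^n}{u^n} + \formb{u^n}{u^n} \geq \norm{\nabla u^n}{\Ltwo{\domain}}^2$. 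This step is the one place where the hypothesis $\Delta\elpot \leq 0$ is genuinely used, and it is the same mechanism that made the continuous stability estimate in Lemma \ref{lem:drift-diffusion-pde-energy} go through.

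For the right-hand side I would apply Cauchy--Schwarz followed by Young's inequality in the form $\ip{u^{n-1}}{u^n} \leq \half\norm{u^{n-1}}{\Ltwo{\domain}}^2 + \half\norm{u^n}{\Ltwo{\domain}}^2$; alternatively the discrete polarisation identity $\ip{u^{n-1}}{u^n} = \half\norm{u^{n-1}}{\Ltwo{\domain}}^2 + \half\norm{u^n}{\Ltwo{\domain}}^2 - \half\norm{u^n - u^{n-1}}{\Ltwo{\domain}}^2$ would give the cleaner version at the cost of one extra non-negative term that can then be dropped. Substituting these into the tested equation and subtracting $\half\norm{u^n}{\Ltwo{\domain}}^2$ from both sides delivers the claimed bound.

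There is no real obstacle here: every ingredient is already in place from the preceding lemmata, and the argument is simply the discrete analogue of Lemma \ref{lem:drift-diffusion-pde-energy} with the time derivative replaced by the backward-Euler difference quotient. The only minor subtlety is ensuring that the constants in Young's inequality are chosen so that the $\half\norm{u^n}{\Ltwo{\domain}}^2$ terms cancel exactly, leaving $\timestep\norm{\nabla u^n}{\Ltwo{\domain}}^2$ on the left with its full coefficient as stated.
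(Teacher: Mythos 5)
Your proposal is correct and follows essentially the same route as the paper: the paper cites Lemma \ref{lem:time-disc-coerc} (whose $\Delta\elpot\leq 0$ case is precisely the bound $\cA(u^n,u^n)\geq\norm{u^n}{\Ltwo{\domain}}^2+\timestep\norm{\nabla u^n}{\Ltwo{\domain}}^2$) and then applies Cauchy--Schwarz and Young, whereas you simply unpack that coercivity step via Lemma \ref{lem:Laplacian-sign} directly. The two arguments are the same in substance.
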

\begin{proof}
  From (\ref{eq:dd-semi}) and Lemma \ref{lem:time-disc-coerc} we have
  \begin{equation}
    \norm{u^n}{\Ltwo{\domain}}^2 + \timestep\norm{\nabla u^n}{\Ltwo{\domain}}^2 \leq \cA(u^n, u^n)  = \ip{u^{n-1}}{u^n},
  \end{equation}
  and the result follows by application of the Cauchy-Schwarz and Young inequalities.
\end{proof}

\begin{lemma}[Maximum Principle for the Semi-Discrete Problem]
  \label{lem:semi-discrete-maximum-principle}
  Suppose $\elpot\in\Sob{2}{\infty}{\domain}$ and $\Delta\elpot \leq 0$ in $\domain$. Let $\left\{u^n\right\}_{n=0}^N$ be the sequence of solutions to the semi-discrete problem (\ref{eq:dd-semi}) with initial condition $u^0 = u_0 \in \Linf{\domain}$, where $u_0 \geq 0$. Then the following maximum principle holds:
  \begin{equation}
    0 \leq \inf_{\domain} u^{0} \leq \inf_{\domain} u^n \leq \sup_{\domain} u^n \leq \sup_{\domain} u^{0}, \quad \text{for all } n = 1, 2, \dots, N.
  \end{equation}
\end{lemma}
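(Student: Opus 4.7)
The plan is to proceed by induction on $n$, establishing both bounds simultaneously at each step by testing (\ref{eq:dd-semi}) with suitably truncated functions. The base case $n = 0$ holds by hypothesis. For the inductive step, assuming $0 \leq u^{n-1} \leq M$ almost everywhere with $M := \sup_{\domain} u^0 \geq 0$, I would test with $v := u^n_- = \min(u^n, 0)$ to obtain the lower bound $u^n \geq 0$ and with $v := (u^n - M)^+$ to obtain the upper bound $u^n \leq M$. Both functions belong to $\Honezero{\domain}$: the first because truncation preserves the vanishing trace of $u^n$; the second because on $\bound$ we have $u^n - M = -M \leq 0$ (using $M \geq 0$), so its positive part vanishes there.

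For the lower bound, on the support of $u^n_-$ the functions $u^n$ and $u^n_-$ coincide, as do their gradients, so the mass and diffusion terms reduce to $\norm{u^n_-}{\Ltwo{\domain}}^2$ and $\tau\norm{\nabla u^n_-}{\Ltwo{\domain}}^2$ respectively, while the drift term becomes $\tau\formb{u^n_-}{u^n_-} = -\half\tau\ip{\Delta\elpot}{(u^n_-)^2}$ by the Energy Identity (Lemma \ref{lem:Laplacian-sign}) and is non-negative thanks to $\Delta\elpot \leq 0$. The right-hand side $\ip{u^{n-1}}{u^n_-}$ is non-positive by the inductive hypothesis, forcing $u^n_- = 0$ almost everywhere. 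For the upper bound, setting $w := u^n - M$ and decomposing $u^n = w + M$, the constant $M$-pieces of the mass term cancel against an identical contribution from the right-hand side after writing $u^{n-1} = (u^{n-1} - M) + M$. The remaining equation reduces to $\norm{w^+}{\Ltwo{\domain}}^2 + \tau\norm{\nabla w^+}{\Ltwo{\domain}}^2 + \tau\formb{u^n}{w^+} = \ip{u^{n-1}-M}{w^+} \leq 0$, so it suffices to verify $\formb{u^n}{w^+} \geq 0$, from which $w^+ = 0$ follows.

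The main obstacle is handling this last drift term, since $u^n$ does not vanish on the support of $w^+$. My plan is to split $u^n = (u^n - M) + M$, producing
\begin{equation*}
  \formb{u^n}{w^+} = \formb{w^+}{w^+} + M\int_{\domain}\nabla\elpot \cdot \nabla w^+ \d\point = -\half\ip{\Delta\elpot}{(w^+)^2} - M\ip{\Delta\elpot}{w^+},
\end{equation*}
using Lemma \ref{lem:Laplacian-sign} for the quadratic piece and integration by parts (valid since $w^+ \in \Honezero{\domain}$) for the linear piece. Both summands are non-negative precisely because $\Delta\elpot \leq 0$ \emph{and} $M \geq 0$, the latter inherited from the non-negativity hypothesis on $u_0$. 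This interplay between the two assumptions exactly mirrors the role they play in the continuous maximum principle (Lemma \ref{lem:parabolic-maximum-principle}).
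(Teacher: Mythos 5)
Your proposal is correct and follows essentially the same route as the paper: induction on $n$, testing \eqref{eq:dd-semi} with the negative part of $u^n$ for the lower bound and with $\qp{u^n - \sup_\domain u^0}_+$ for the upper bound, using the disjoint-support/decomposition trick together with Lemma \ref{lem:Laplacian-sign} and $\Delta\elpot \leq 0$. The only difference is that you write out in full the upper-bound computation (the cancellation of the constant $M$-terms and the sign of the drift contribution), which the paper leaves as ``similar fashion,'' and your details there are sound.
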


\begin{proof}
  We prove the non-negativity of $u^n$ by induction on $n$. For the
  base case, $n = 0$, the initial condition $u^0 = u_0 \in
  \Linf{\domain}$ is given and, by assumption, $u_0 \geq 0$. Now,
  assume inductively that $u^{n-1} \geq 0$ for some $n \geq 1$. We
  want to show that $u^n \geq 0$.
  
  Let us begin by defining the positive and negative parts of $u^n$ by $\qp{u^n}_+ := \max\qp{u^n, 0}$ and $\qp{u^n}_- := u^n - \qp{u^n}_+$, respectively, noting that $\qp{u^n}_+, \qp{u^n}_-\in\Honezero{\domain}$ since $u^n\in\Honezero{\domain}$.
  Then the mutually-disjoint support of $\qp{u^n}_+$ and $\qp{u^n}_-$ implies that
  \begin{equation}
    \ip{u^n}{\left(u^n\right)_-}
    =
    \ip{\left(u^n\right)_+ + \left(u^n\right)_-}{\left(u^n\right)_-}
    =
    \norm{\left(u^n\right)_-}{\Ltwo{\domain}}^2.
  \end{equation}
  Choosing $v = \left(u^n\right)_-$ in
  (\ref{eq:dd-semi}) then leads to
  \begin{equation}
    \norm{\left(u^n\right)_-}{\Ltwo{\domain}}^2 = \ip{u^{n-1}}{\left(u^n\right)_-}
    -
    \timestep \left( \forma{u^n}{\left(u^n\right)_-}
    +
    \formb{u^n}{\left(u^n\right)_-} \right).
  \end{equation}
  It follows from the inductive hypothesis that
  \begin{equation}
    \ip{u^{n-1}}{\left(u^n\right)_-} \leq 0,
  \end{equation}
  and using again the mutually-disjoint
  support of $\qp{u^n}_+$ and $\qp{u^n}_-$ then gives
  \begin{equation}
    \norm{\left(u^n\right)_-}{\Ltwo{\domain}}^2
    \leq
    -\timestep\qp{
      \forma{\left(u^n\right)_-}{\left(u^n\right)_-}
      +
      \formb{\left(u^n\right)_-}{\left(u^n\right)_-}
    }.
  \end{equation}
  If $\Delta\elpot \leq 0$, then Lemma \ref{lem:Laplacian-sign} implies that
  \begin{equation}
    \norm{\left(u^n\right)_-}{\Ltwo{\domain}}^2 = 0,
  \end{equation}
  and so $\left(u^n\right)_- \equiv 0$. In other words $u^n \geq 0$.
  The upper bound can be shown in a similar fashion taking $v =
  \left(u^n - \norm{u^{0}}{L^\infty(\domain)}\right)_+$.
\end{proof}

\section{A discontinuous Galerkin method}
\label{sec:dg-disc}

Let $\mathcal T$ be a regular subdivision of $\W$ into disjoint
simplicial or box-type (quadrilateral/hexahedral) elements $K$. We
assume that the subdivision $\mathcal T$ is shape-regular, that
$\bar{\W}=\bigcup_{K\in\mathcal T}(\bar{K})$, and that the codimension-one elemental
faces are straight planar segments; these will be, henceforth,
referred to as \emph{facets}. We shall denote the
union of all \emph{interior} facets associated with
$\mathcal{T}$, not including the boundary, by $\mathcal{E}_{\rm{int}}$.

For $0 \leq p \in \mathbb N_0$ and a $K\in\mathcal T$ we denote the
set of polynomials of total degree at most $p$ by $\mathbb P^{p}(K)$ which
allows us to define the discontinuous Galerkin finite element space
\begin{equation}
  \fes_p := \ensemble{w_h \in \Ltwo{\domain}}{w_h\vert_{K} \in \mathbb P^{p}(K) \Foreach K\in\mathcal T, \text{ and with vanishing trace on }\bound}.
\end{equation}
\begin{remark}
  Our motivation for imposing the boundary conditions strongly is the pointwise nature of the contraints we shall apply in Section \ref{sec:nodal-dg-disc}. It is then straightforward to extend the method to non-homogeneous conditions. We believe the nodally bound-preserving method is compatible with discretisations with weakly-enforced boundary conditions, however, this remains the focus of active research.
\end{remark}

Let $K_1, K_2 \in \mathcal T$, with $K_1$ upwind from $K_2$, be two
elements sharing a facet $e\in \mathcal{E}_{\rm{int}}$. For a function
$w:\W\to\reals$ we define the jump and average operators over $e$ by
\begin{align}
  &\jump{w}_e
    :=
    w\vert_{K_1} \normal_{K_1}
    +
    w\vert_{K_2} \normal_{K_2},
  &&\ujump{w}_e
     :=
     w\vert_{K_1}
     -
     w\vert_{K_2},
  &&&\avg{w}_e := \half\left(w\rvert_{K_1} + w\rvert_{K_2}\right),
\end{align}
respectively. We will usually omit the subscript $e$. Let $h_K := \text{diam}(K)$ and $h \in \fes_0$ to be the piecewise constant meshsize function such that $h|_K = h_K$.

Now we define the discontinuous Galerkin method. Let $0 \leq u_0\in\L{\infty}{\domain}$, $\elpot\in\Sob{2}{\infty}{\domain}$, and $\Pi_h : \Ltwo{\domain}\longrightarrow\finspace_p$ be the $L^2$ projection. For $u_h^0= \Pi_h\!\left(u_0\right)$ and each $n=1, 2, \ldots, N$, find $u_h^n \in \fes_p$ such that
\begin{equation}
  \label{eq:dg-disc}
  \cA_h(u_h^n,v_h)
  :=
  \ip{u_h^n}{v_h}
  +
  \tau \qp{
    a_h(u_h^n ,v_h)
    +
    b_h(u_h^n, v_h)
  }
  =
  \ip{u^{n-1}_h}{v_h} \Foreach v_h \in \fes_p,
\end{equation}
where, for $w, v \in \H{2}{\mathcal T}$ and $\pen, \upen > 0$,
\begin{equation}
  a_h(w, v)
  :=
  \sum_{K\in\mathcal T}
  \qp{
    \int_K \nabla w \cdot \nabla v  \d \vec x
  }
  -
  \int_{\mathcal{E}_{\rm{int}}}
  \qp{
    \jump{w} \cdot \avg{\nabla v}
    +
    \jump{v} \cdot \avg{\nabla w}
    -
    \frac \sigma h \jump{w} \cdot \jump{v}
  } \d s
\end{equation}
and
\begin{equation}
  \label{eq:upwind-def}
  b_h(w,v)
  :=
  \sum_{K\in\mathcal T}
  \qp{\int_K w \nabla \psi \cdot \nabla v  \d \vec x}
  -
  \int_{\mathcal{E}_{\rm{int}}}
  \qp{
    \left(\nabla \psi \cdot \normal\right) \avg{w}\ujump{v}
    -
    \frac{\upen}{2}\lvert\nabla\elpot\cdot\normal\rvert\ujump{w}\ujump{v}
  }\d s.
\end{equation}

The bilinear form $a_h(\cdot, \cdot)$ is the symmetric interior penalty (SIP) discretisation of the diffusive term, which is coercive, assuming $\pen$ is chosen large enough, on $\finspace_p$ under the norm
\begin{equation}
  \norm{w_h}{\operatorname{sip}}^2 := \norm{\finnabla w_h}{\Ltwo{\domain}}^2 + \frac{\pen}{h}\norm{\jump{w_h}}{\Ltwo{\mathcal{E}_{\rm{int}}}}^2,
\end{equation}
where $\pen$ depends on the polynomial degree $p$ and geometric
features of the mesh \cite{cangiani2014hp}.
\begin{lemma}[{SIP coercivity, e.g. \cite[Lemma 4.12]{DiPietro2012}, \cite[Lemma 27]{Cangiani2017}}]
  \label{lem:sip-coerc}
  Let $w_h\in\finspace_p$. There exists $\widehat{\pen} > 0$ and $\constsipcoerc > 0$, such that if $\pen \geq \widehat{\pen}$, then
  \begin{equation}
    a_h(w_h, w_h) \geq \constsipcoerc\norm{w_h}{\operatorname{sip}}^2.
  \end{equation}
\end{lemma}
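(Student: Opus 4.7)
The plan is to proceed along the standard route for proving coercivity of the symmetric interior penalty form, the main technical tool being a discrete trace inequality to control the consistency terms. Setting $v = w = w_h$ in the definition of $a_h$ collapses the two consistency terms into a single factor of $2$, yielding
\begin{equation}
  a_h(w_h, w_h) = \norm{\finnabla w_h}{\Ltwo{\domain}}^2 - 2 \int_{\mathcal{E}_{\rm{int}}} \jump{w_h} \cdot \avg{\nabla w_h} \d s + \frac{\pen}{h}\norm{\jump{w_h}}{\Ltwo{\mathcal{E}_{\rm{int}}}}^2.
\end{equation}
The only term with an indefinite sign is the middle consistency contribution, and everything reduces to absorbing it into the other two.

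First I would introduce a weight $h^{1/2}$ and apply Cauchy--Schwarz followed by Young's inequality with a free parameter $\epsilon > 0$ to split the cross term:
\begin{equation}
  2 \int_{\mathcal{E}_{\rm{int}}} \jump{w_h} \cdot \avg{\nabla w_h} \d s \leq \epsilon \norm{h^{1/2}\avg{\nabla w_h}}{\Ltwo{\mathcal{E}_{\rm{int}}}}^2 + \frac{1}{\epsilon}\norm{h^{-1/2}\jump{w_h}}{\Ltwo{\mathcal{E}_{\rm{int}}}}^2.
\end{equation}
Next, I would invoke the discrete trace inequality on each element, $h_K \norm{\nabla w_h}{\Ltwo{\partial K}}^2 \leq C_{\mathrm{tr}} \norm{\nabla w_h}{\Ltwo{K}}^2$, which holds for polynomials on a shape-regular simplex/box, and sum over the elements adjoining each interior facet. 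This produces a bound of the form $\norm{h^{1/2}\avg{\nabla w_h}}{\Ltwo{\mathcal{E}_{\rm{int}}}}^2 \leq C_{\mathrm{tr}}' \norm{\finnabla w_h}{\Ltwo{\domain}}^2$, where $C_{\mathrm{tr}}'$ depends on the shape-regularity and on $p$ but not on $h_K$.

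Combining these bounds yields
\begin{equation}
  a_h(w_h, w_h) \geq (1 - \epsilon C_{\mathrm{tr}}')\norm{\finnabla w_h}{\Ltwo{\domain}}^2 + \qp{\pen - \tfrac{1}{\epsilon}}\norm{h^{-1/2}\jump{w_h}}{\Ltwo{\mathcal{E}_{\rm{int}}}}^2.
\end{equation}
Choosing $\epsilon = 1/(2 C_{\mathrm{tr}}')$ makes the first coefficient equal to $1/2$, and setting $\widehat{\pen} := 4 C_{\mathrm{tr}}'$ ensures the second coefficient is at least $2 C_{\mathrm{tr}}' \geq 1/2$ whenever $\pen \geq \widehat{\pen}$. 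Taking $\constsipcoerc := 1/2$ then gives the claimed estimate.

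The main obstacle is really a bookkeeping one: getting the correct weighting with $h$ so that the trace inequality cancels exactly the right power, and tracking the dependence of $C_{\mathrm{tr}}'$ on $p$ (this is where a full reference to \cite{cangiani2014hp} is needed for the non-trivial $p$-dependence). Everything else is Cauchy--Schwarz and Young's inequality, so I would not grind through the constants and instead refer to \cite{DiPietro2012} or \cite{Cangiani2017} for the sharp form of the discrete trace inequality and the precise admissible value of $\widehat{\pen}$.
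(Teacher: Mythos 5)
Your proposal is correct and is essentially the same argument as the one behind this result in the paper: the paper gives no proof of its own, deferring to the cited references, and the standard proof there is exactly your route of expanding $a_h(w_h,w_h)$, splitting the consistency term with Cauchy--Schwarz and Young, absorbing the flux average via the discrete trace inequality, and taking $\pen$ large. The only point to tighten is the final comparison: since the $\operatorname{sip}$-norm weights the jump term by $\pen/h$ (not $1/h$), the relevant check is $\pen - 1/\epsilon \geq \pen/2$, which your choice $\widehat{\pen} = 4C_{\mathrm{tr}}'$ indeed guarantees.
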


The bilinear form $b_h(\cdot, \cdot)$ is an upwinding discretisation of the advective term, where the classical upwinding method is recovered when $\mu = 1$. A natural notion of error when considering the problem (\ref{eq:dg-disc}) is the energy norm defined by
\begin{equation}
  \enorm{w_h}^2
  :=
  \norm{w_h}{\Ltwo{\domain}}^2
  +
  \timestep\norm{w_h}{\operatorname{sip}}^2
  +
  \frac{\timestep\upen}{2}\norm{\lvert\nabla\elpot\cdot\normal\rvert^{1/2}\ujump{w_h}}{\Ltwo{\mathcal{E}_{\rm{int}}}}^2,
\end{equation}
and the well-posedness of the discrete problem is a consequence of the
following coercivity result for the bilinear form $\cA_h(\cdot,
\cdot)$, the proof of which follows the same lines as \cite[Lemma 4.59]{DiPietro2012}.

\begin{lemma}[{Discrete coercivity}]
  \label{lem:dg-coerc}
  Let $w_h\in\finspace_p$ and $\elpot\in\Sob{2}{\infty}{\domain}$. With $\constsipcoerc$ and $\constSob^K$ defined as in Lemma \ref{lem:sip-coerc}. If $\Delta\elpot \leq 0$, then $\cA_h(\cdot, \cdot)$ is coercive on $\finspace_p$, with
  \begin{equation}
    \label{eq:dg-coerc-one}
    \cA_h(w_h, w_h) \geq \constcoerc\enorm{w_h}^2,\quad\text{ where }\constcoerc := \min\!\left(1, \constsipcoerc\right).
  \end{equation}
  Otherwise, if $\Delta\elpot \nleq 0$, then $\cA_h(\cdot, \cdot)$ is coercive on $\finspace_p$ provided $\timestep < 2\norm{\Delta\elpot}{\L{\infty}{\domain}}^{-1}$, we have
  \begin{equation}
    \label{eq:dg-coerc-two}
    \cA_h(w_h, w_h) \geq \constcoerc\enorm{w_h}^2,\quad\text{ where }\constcoerc := \min\!\left(1 - \frac{\timestep}{2}\norm{\Delta\elpot}{\L{\infty}{\domain}}, \constsipcoerc\right).
    \end{equation}
  \end{lemma}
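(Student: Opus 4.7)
The plan is to expand $\cA_h(w_h,w_h)$ into its three natural pieces, invoke Lemma~\ref{lem:sip-coerc} to handle the diffusive part, and then show by elementwise integration by parts that $b_h(w_h,w_h)$ reduces to a Laplacian term plus the upwind jump penalty. This is the discrete analogue of Lemma~\ref{lem:Laplacian-sign}, and essentially reduces the argument to the reasoning already used in Lemma~\ref{lem:time-disc-coerc}.

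First I would write
\begin{equation*}
\cA_h(w_h,w_h) = \norm{w_h}{\Ltwo{\domain}}^2 + \tau\, a_h(w_h,w_h) + \tau\, b_h(w_h,w_h),
\end{equation*}
and use Lemma~\ref{lem:sip-coerc} directly to bound the middle term from below by $\tau\constsipcoerc\norm{w_h}{\operatorname{sip}}^2$, provided $\sigma \geq \widehat{\pen}$. The main obstacle is producing a usable expression for $b_h(w_h,w_h)$. For each $K\in\mathcal T$, I would use $w_h\nabla w_h = \tfrac12\nabla(w_h^2)$ and integration by parts to write
\begin{equation*}
\int_K w_h\,\nabla\psi\cdot\nabla w_h\,\d\vec x = -\tfrac12\int_K \Delta\psi\, w_h^2\,\d\vec x + \tfrac12\int_{\partial K}(\nabla\psi\cdot\normal_K)\,w_h^2\,\d s.
\end{equation*}
Summing over $K$, the boundary contributions collapse onto interior facets (the Dirichlet condition built into $\fes_p$ kills boundary-facet terms), and the algebraic identity $w_h|_{K_1}^2 - w_h|_{K_2}^2 = 2\avg{w_h}\ujump{w_h}$ identifies the jump contribution as $\int_{\mathcal E_{\rm int}}(\nabla\psi\cdot\normal)\avg{w_h}\ujump{w_h}\d s$. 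Substituting into the definition of $b_h$, the consistency term $\int_{\mathcal E_{\rm int}}(\nabla\psi\cdot\normal)\avg{w_h}\ujump{w_h}\d s$ cancels exactly, leaving the discrete energy identity
\begin{equation*}
b_h(w_h,w_h) = -\tfrac12\ip{\Delta\psi}{w_h^2} + \tfrac{\mu}{2}\norm{\lvert\nabla\psi\cdot\normal\rvert^{1/2}\ujump{w_h}}{\Ltwo{\mathcal E_{\rm int}}}^2.
\end{equation*}

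With this identity in hand, the two cases follow easily. If $\Delta\psi\le 0$ in $\domain$, the first term on the right is non-negative and, combined with the SIP bound and the $L^2$ contribution, we obtain $\cA_h(w_h,w_h) \geq \min(1,\constsipcoerc)\enorm{w_h}^2$, giving \eqref{eq:dg-coerc-one}. For the general case I would estimate $-\tfrac{\tau}{2}\ip{\Delta\psi}{w_h^2} \geq -\tfrac{\tau}{2}\norm{\Delta\psi}{\Linf{\domain}}\norm{w_h}{\Ltwo{\domain}}^2$, absorbing this into the $L^2$ term to yield a coefficient $1 - \tfrac{\tau}{2}\norm{\Delta\psi}{\Linf{\domain}}$ on $\norm{w_h}{\Ltwo{\domain}}^2$. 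The time-step restriction $\tau < 2\norm{\Delta\psi}{\Linf{\domain}}^{-1}$ ensures this coefficient is positive, and taking the minimum with $\constsipcoerc$ delivers \eqref{eq:dg-coerc-two}. The jump-penalty contribution from $b_h$ remains non-negative and is retained as the third component of $\enorm{\cdot}$, closing the estimate.
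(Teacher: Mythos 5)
Your proposal is correct and follows essentially the same route as the paper: the paper defers the proof to the standard upwind dG argument of \cite[Lemma 4.59]{DiPietro2012}, which is precisely your expansion of $\cA_h$, the SIP coercivity of Lemma \ref{lem:sip-coerc}, and the elementwise integration-by-parts identity $b_h(w_h,w_h) = -\tfrac12\ip{\Delta\elpot}{w_h^2} + \tfrac{\upen}{2}\norm{\lvert\nabla\elpot\cdot\normal\rvert^{1/2}\ujump{w_h}}{\Ltwo{\mathcal{E}_{\rm{int}}}}^2$, which is exactly the identity (\ref{eq:upwind-coerc}) the paper itself derives and uses in the proof of Lemma \ref{lem:dg-disc-energy}. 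The two-case conclusion (sign of $\Delta\elpot$ versus the time-step restriction) is handled as in the paper, so no gaps.
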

  
\begin{corollary}
  Let $u_0\in\Ltwo{\domain}$, and let the assumptions of Lemma \ref{lem:dg-coerc} be satisfied. Then there exists a unique $u_h^n\in\fes_p$ solving (\ref{eq:dg-disc}), for each $n = 1, 2, \ldots, N$.
\end{corollary}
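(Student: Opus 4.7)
The plan is to proceed by induction on the time index $n$, reducing the question at each step to the solvability of a linear variational problem on the finite-dimensional space $\finspace_p$, and then invoking Lax-Milgram with coercivity supplied by Lemma \ref{lem:dg-coerc}.

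For the base case $n=0$, the initial iterate $u_h^0 = \Pi_h(u_0)$ is well-defined in $\finspace_p$ because $u_0 \in \Ltwo{\domain}$ and the $L^2$-projection onto a closed subspace of $\Ltwo{\domain}$ always exists and is unique. For the inductive step, assume $u_h^{n-1}\in\finspace_p$ has been produced. Then the right-hand side $\ell(v_h) := \ip{u_h^{n-1}}{v_h}$ defines a linear functional on $\finspace_p$ that is continuous with respect to the energy norm $\enorm{\cdot}$, since by Cauchy--Schwarz $|\ell(v_h)| \leq \norm{u_h^{n-1}}{\Ltwo{\domain}}\norm{v_h}{\Ltwo{\domain}} \leq \norm{u_h^{n-1}}{\Ltwo{\domain}}\enorm{v_h}$.

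Next I would verify that $\cA_h(\cdot,\cdot)$ is continuous on $\finspace_p\times\finspace_p$ with respect to $\enorm{\cdot}$. The $L^2$ term and the volume parts of $a_h$ and $b_h$ are handled by H\"older's inequality together with $\elpot\in\Sob{2}{\infty}{\domain}$, exactly as in the proof of Lemma \ref{lem:dd-semi-boundedness}. The jump/face contributions in $a_h$ are controlled in $\enorm{\cdot}$ by a standard trace/inverse inequality argument on the mesh skeleton (as in \cite[Lemma 4.12]{DiPietro2012}), while the upwind facet terms in $b_h$ are bounded directly using $\nabla\elpot\in\Linf{\domain}$ and the $\lvert\nabla\elpot\cdot\normal\rvert^{1/2}\ujump{\cdot}$ contribution already present in the energy norm. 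Strictly, since $\finspace_p$ is finite-dimensional and all norms are equivalent, continuity is automatic, but the preceding estimates make it quantitative.

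Coercivity of $\cA_h(\cdot,\cdot)$ on $\finspace_p$ with respect to $\enorm{\cdot}$ follows from Lemma \ref{lem:dg-coerc}, where one of the two sufficient hypotheses (either $\Delta\elpot\leq 0$, or the time step restriction $\timestep < 2\norm{\Delta\elpot}{\L{\infty}{\domain}}^{-1}$) guarantees a strictly positive coercivity constant $\constcoerc$. The Lax-Milgram theorem then delivers a unique $u_h^n\in\finspace_p$ satisfying (\ref{eq:dg-disc}), completing the inductive step. There is no real obstacle in this argument; the work has already been done in Lemma \ref{lem:dg-coerc}, and the corollary is essentially a bookkeeping statement propagating coercivity through the time stepping.
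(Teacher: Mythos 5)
Your proposal is correct and follows essentially the same route the paper intends: the corollary is a direct consequence of the coercivity established in Lemma \ref{lem:dg-coerc} together with boundedness of $\cA_h(\cdot,\cdot)$ on the finite-dimensional space $\finspace_p$ (automatic there, or quantified via Lemma \ref{lem:discrete-boundedness}), applied via Lax--Milgram time step by time step, exactly mirroring the semi-discrete corollary. Your additional remarks on the well-definedness of $u_h^0=\Pi_h(u_0)$ and the continuity of the right-hand side functional are consistent with, and simply make explicit, the bookkeeping the paper leaves implicit.
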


The discrete solution to (\ref{eq:dg-disc}) enjoys the satisfaction of the following stability result.
\begin{lemma}[Discrete stability]
  \label{lem:dg-disc-energy}
  Let the conditions of Lemma \ref{lem:dg-coerc} be satisfied. For $n = 1, 2, \ldots, N$, let $\finddvar^n\in\finspace_p$ be the solution to (\ref{eq:dg-disc}), with $0 \leq u_0\in\L{\infty}{\domain}$. Then
  \begin{equation}
    \begin{aligned}
      \half\norm{u_h^n}{\Ltwo{\domain}}^2 \leq &\half\norm{u_h^{n-1}}{\Ltwo{\domain}}^2 - \constsipcoerc\timestep\norm{u_h^n}{\operatorname{sip}}^2 - \frac{\timestep\upen}{2}\norm{\lvert\nabla\elpot\cdot\normal\rvert^{1/2}\ujump{u_h^n}}{\Ltwo{\mathcal{E}_{\rm{int}}}}^2\\
                                               &- \timestep\sum_{K\in\mathcal T}\qp{-\half\int_K\qp{u_h^n}^2\Delta\elpot \d\point} ,
    \end{aligned}
  \end{equation}
  and the sequence $\norm{u_h^n}{\Ltwo{\domain}}^2$ is monotonically decreasing if $\Delta\elpot \leq 0$.
\end{lemma}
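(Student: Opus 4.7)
The plan is to test the discrete equation with $v_h = u_h^n$ and bound each piece of $\cA_h(u_h^n, u_h^n)$ separately, following the template of the semi-discrete stability proof. The diffusive part is handled directly by the SIP coercivity estimate (the discrete analogue of $\forma{w}{w} = \norm{\nabla w}{\Ltwo{\domain}}^2$ in Lemma \ref{lem:Laplacian-sign}), while the advective part requires a discrete analogue of the identity $\formb{w}{w} = -\half \ip{\Delta\elpot}{w^2}$, now carrying an extra, non-negative, penalty contribution from the upwinding.

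First, I would set $v_h = u_h^n$ in \eqref{eq:dg-disc} to obtain
\begin{equation*}
  \norm{u_h^n}{\Ltwo{\domain}}^2 + \tau\, a_h(u_h^n, u_h^n) + \tau\, b_h(u_h^n, u_h^n) = \ip{u_h^{n-1}}{u_h^n}.
\end{equation*}
For the diffusive form, Lemma \ref{lem:sip-coerc} gives $a_h(u_h^n, u_h^n) \geq \constsipcoerc \norm{u_h^n}{\operatorname{sip}}^2$. The right-hand side is bounded using Cauchy--Schwarz followed by Young's inequality in the form $\ip{u_h^{n-1}}{u_h^n} \leq \tfrac12 \norm{u_h^{n-1}}{\Ltwo{\domain}}^2 + \tfrac12 \norm{u_h^n}{\Ltwo{\domain}}^2$, which produces the factor $\tfrac12$ in front of the $L^2$ norms appearing in the claim.

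The main computational step, and the one I expect to be the real work, is establishing the identity
\begin{equation*}
  b_h(w, w) = -\half \sum_{K\in\mathcal T} \int_K w^2 \Delta\elpot \d\point + \frac{\upen}{2} \norm{\lvert\nabla\elpot\cdot\normal\rvert^{1/2} \ujump{w}}{\Ltwo{\mathcal{E}_{\rm int}}}^2
\end{equation*}
for $w \in \fes_p$. To see this I would apply the product rule $w\nabla\elpot\cdot\nabla w = \tfrac12 \nabla\cdot(w^2 \nabla\elpot) - \tfrac12 w^2 \Delta\elpot$ on each element, then integrate by parts to obtain, on $K$, the contribution $-\tfrac12 \int_K w^2 \Delta\elpot + \tfrac12 \int_{\partial K} w^2 \,\nabla\elpot\cdot\normal_K$. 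Summing over $K\in\mathcal T$ and using the vanishing trace on $\bound$, the skeleton contribution on an interior facet $e$ shared by $K_1, K_2$ collapses via $w|_{K_1}^2 - w|_{K_2}^2 = (w|_{K_1}+w|_{K_2})(w|_{K_1}-w|_{K_2}) = 2\avg{w}\ujump{w}$ to $\int_e \avg{w}\ujump{w}\,\nabla\elpot\cdot\normal \d s$. This term then exactly cancels the centred consistency term $-\int_{\mathcal{E}_{\rm int}} (\nabla\elpot\cdot\normal)\avg{w}\ujump{w} \d s$ in the definition \eqref{eq:upwind-def} of $b_h$, leaving only the element contributions involving $\Delta\elpot$ and the non-negative upwind penalty term $\tfrac{\upen}{2}\int_{\mathcal{E}_{\rm int}} \lvert\nabla\elpot\cdot\normal\rvert \ujump{w}^2 \d s$.

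Substituting the identity for $b_h(u_h^n, u_h^n)$ and the SIP bound for $a_h(u_h^n, u_h^n)$ into the tested equation, then moving $\tfrac12\norm{u_h^n}{\Ltwo{\domain}}^2$ from the Young estimate to the left, yields the stated inequality. The monotonicity statement follows immediately, since under $\Delta\elpot \leq 0$ every subtracted term on the right-hand side is non-negative (the $\operatorname{sip}$-norm and facet penalty terms are manifestly so, and $-\tfrac12 \int_K (u_h^n)^2 \Delta\elpot \geq 0$), giving $\norm{u_h^n}{\Ltwo{\domain}}^2 \leq \norm{u_h^{n-1}}{\Ltwo{\domain}}^2$.
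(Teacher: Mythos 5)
Your proposal is correct and matches the paper's argument: the paper also tests with $v_h = u_h^n$, establishes the identity (\ref{eq:upwind-coerc}) for $b_h(w_h,w_h)$ by elementwise integration by parts, and concludes via Lemma \ref{lem:sip-coerc} together with the Cauchy--Schwarz and Young inequalities. The only difference is that you spell out the facet algebra (the cancellation of $\avg{w}\ujump{w}\,\nabla\elpot\cdot\normal$ with the centred consistency term), which the paper leaves implicit.
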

\begin{proof}
  Integrating (\ref{eq:upwind-def}) by parts, for all $w_h\in\finspace_p$, we find that
  \begin{equation}
    \label{eq:upwind-coerc}
    b_h(w_h, w_h) = \sum_{K\in\mathcal T}\qp{-\half\int_Kw_h^2\Delta\elpot \d\point} + \frac{\upen}{2}\norm{\lvert\nabla\elpot\cdot\normal\rvert^{1/2}\ujump{w_h}}{\Ltwo{\mathcal{E}_{\rm{int}}}}^2.
  \end{equation}
  Choosing $\finddtest = u_h$ in (\ref{eq:dg-disc}), we obtain
  \begin{equation}
    \cA_h(\finddvar^n, \finddvar^n) = \ip{\finddvar^{n-1}}{\finddvar^n}.
  \end{equation}
  and then using Lemma \ref{lem:sip-coerc}, the Cauchy-Schwarz and Young inequalities, and (\ref{eq:upwind-coerc}) completes the proof.
\end{proof}

\begin{remark}[Non-conforming DMP]
  \label{rem:M-matrix}
  In the conforming setting, at least with $\Delta \psi \leq 0$, we
  can use lumping to ensure the discrete problem forms an M-matrix at
  the algebraic level \cite{Thomee2006Galerkin} which guarantees a
  discrete maximum principle. In the non-conforming setting things are
  less clear due to the coupling between elements through the jump
  terms. It is unclear whether a DMP for a SIP discretisation can be
  proven for a simple diffusion equation, see \cite[\S
    9.4]{barrenechea2024finite} for a further discussion and references.
\end{remark}

\subsection{A priori error analysis}
\label{sec:dg-a-priori}

In this section we show an a priori error bound in the energy norm for
the discretisation (\ref{eq:dg-disc}). The error analysis is based on
the following lemmata, see also \cite{dong2020recovered}, where we
make use of the stronger norm
\begin{equation}
  \label{eq:star-norm}
  \enorm{w_h}_*^2 := \enorm{w_h}^2 + \timestep\norm{\lvert\nabla\elpot\cdot\normal\rvert^{1/2}\avg{w_h}}{\Ltwo{\mathcal{E}_{\rm{int}}}}^2.
\end{equation}

\begin{lemma}[{Boundedness of $\cA_h(\cdot, \cdot)$}]
  \label{lem:discrete-boundedness}
  Let $w_h, v_h\in\H{2}{\mathcal{T}}$. There exists $\constbound > 0$, such that
  \begin{equation}
    \cA_h(w_h, v_h) \leq \constbound\enorm{w_h}_*\enorm{v_h}.
  \end{equation}
\end{lemma}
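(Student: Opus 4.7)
The plan is to split $\cA_h(w_h, v_h)$ into its three constituent pieces $\ip{w_h}{v_h}$, $\tau a_h(w_h, v_h)$, and $\tau b_h(w_h, v_h)$, bound each individually by a constant multiple of $\enorm{w_h}_* \enorm{v_h}$, and then gather the estimates using a discrete Cauchy-Schwarz. The $L^2$ inner product is immediate: Cauchy-Schwarz yields $\ip{w_h}{v_h} \leq \norm{w_h}{\Ltwo{\domain}} \norm{v_h}{\Ltwo{\domain}}$, and both factors are directly controlled by the $L^2$ pieces of the two energy norms.

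For the SIP form $a_h$, I would treat the volumetric contribution $\sum_K \int_K \nabla w_h\cdot\nabla v_h$ by Cauchy-Schwarz, producing $\norm{\finnabla w_h}{\Ltwo{\domain}} \norm{\finnabla v_h}{\Ltwo{\domain}}$, both of which sit inside $\norm{\cdot}{\operatorname{sip}}$. The two consistency terms $\int_{\mathcal{E}_{\rm{int}}} \jump{w_h}\cdot\avg{\nabla v_h}\d s$ and $\int_{\mathcal{E}_{\rm{int}}} \jump{v_h}\cdot\avg{\nabla w_h}\d s$ are handled by inserting the balancing scaling $h^{1/2}\cdot h^{-1/2}$, pairing the $\sigma^{1/2} h^{-1/2}\jump{\cdot}$ factors (absorbed directly into the sip norm) with $h^{1/2}\avg{\nabla \cdot}$ factors, the latter bounded by the bulk gradient through a standard elementwise discrete trace inequality on $\finspace_p$. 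The penalty contribution $\int_{\mathcal{E}_{\rm{int}}}\frac{\sigma}{h}\jump{w_h}\cdot\jump{v_h}\d s$ pairs trivially with the jump terms in both sip norms.

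For the upwind form $b_h$, the volume part $\sum_K \int_K w_h \nabla\elpot\cdot\nabla v_h$ is bounded by $\norm{\nabla\elpot}{\Linf{\domain}}\norm{w_h}{\Ltwo{\domain}}\norm{\finnabla v_h}{\Ltwo{\domain}}$, which fits into $\enorm{w_h}\enorm{v_h}$ after accounting for the $\tau$ factors. The critical face term $\int_{\mathcal{E}_{\rm{int}}} (\nabla\elpot\cdot\normal)\avg{w_h}\ujump{v_h}\d s$ is split by Cauchy-Schwarz as $\norm{\lvert\nabla\elpot\cdot\normal\rvert^{1/2}\avg{w_h}}{\Ltwo{\mathcal{E}_{\rm{int}}}} \cdot \norm{\lvert\nabla\elpot\cdot\normal\rvert^{1/2}\ujump{v_h}}{\Ltwo{\mathcal{E}_{\rm{int}}}}$: by construction the first factor is exactly the extra piece added to $\enorm{w_h}$ to produce $\enorm{w_h}_*$, while the second lives inside $\enorm{v_h}$. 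The upwind penalty $\tfrac{\upen}{2}\int_{\mathcal{E}_{\rm{int}}}\lvert\nabla\elpot\cdot\normal\rvert\ujump{w_h}\ujump{v_h}\d s$ is handled analogously, pairing the weighted-jump contributions that are present in both $\enorm{w_h}$ and $\enorm{v_h}$.

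The main obstacle is the SIP consistency pairing: because $\enorm{v_h}$ does not explicitly control $\norm{h^{1/2}\avg{\nabla v_h}}{\Ltwo{\mathcal{E}_{\rm{int}}}}$, the estimate depends crucially on a discrete trace inequality, which is where the piecewise polynomial structure of $\finspace_p$ is used (even though the statement is phrased on $\H{2}{\mathcal T}$). This asymmetric choice of augmented norm is also what drives the need to enrich $\enorm{w_h}$ only with the variable-coefficient face average $\lvert\nabla\elpot\cdot\normal\rvert^{1/2}\avg{w_h}$: this term has no convenient bulk counterpart and must be part of the norm rather than recovered by trace. Collecting all contributions, the constant $\constbound$ ends up depending on $\sigma$, $\upen$, $\norm{\nabla\elpot}{\Linf{\domain}}$, the shape-regularity of $\mathcal T$, and the discrete trace constant.
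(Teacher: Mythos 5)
Your decomposition and most of the individual estimates are fine, but there is a genuine gap in how you treat the SIP consistency terms. You invoke the discrete (inverse) trace inequality to bound the facet quantity $h^{1/2}\avg{\nabla\cdot}$ by the broken gradient, and you apply it to \emph{both} arguments. That inequality is valid only for piecewise polynomials, whereas the statement is posed on $\H{2}{\mathcal T}\times\H{2}{\mathcal T}$ — and this is not merely a question of stating more generality than you prove: in the one place the lemma is used, the Strang-type bound of Lemma \ref{lem:Strang-type-lemma}, the first argument is $w_h - u^n$ with $u^n\in\H{2}{\domain}\cap\Honezero{\domain}$, which does not lie in $\finspace_p$. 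So the version your argument actually establishes (boundedness on $\finspace_p\times\finspace_p$) is both weaker than the claim and insufficient for the error analysis it feeds. Concretely, the term $\int_{\mathcal{E}_{\rm{int}}}\jump{v_h}\cdot\avg{\nabla w_h}\d s$ cannot be absorbed the way you propose: none of the quantities appearing in $\enorm{w_h}_*$ as defined in (\ref{eq:star-norm}) controls the facet traces of $\nabla w_h$ for a general broken $H^2$ function with a constant independent of $w_h$.

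The paper does not print a proof of this lemma, so there is nothing to compare line by line, but the standard argument (cf.\ the boundedness lemmas for SIP in \cite{DiPietro2012}) is asymmetric in the opposite direction from yours: the discrete trace inequality is reserved for the \emph{second} argument only, which is legitimate because in the application $v_h\in\finspace_p$ (equivalently, one states the lemma on $\H{2}{\mathcal T}\times\finspace_p$), while the consistency term involving $\avg{\nabla w_h}$ is controlled by augmenting the $*$-norm with $\timestep\sum_{K\in\mathcal T}h_K\norm{\nabla w_h\cdot\normal}{\Ltwo{\partial K}}^2$ in addition to the advective average term you identified. That extra facet term is then dealt with only at the approximation-error stage, $\inf_{w_h}\enorm{u^n - w_h}_*$, via the scaled \emph{continuous} trace inequality, which uses the $H^2$ regularity of $u^n$ rather than polynomial inverse estimates. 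Your closing remark that the advective average ``must be part of the norm rather than recovered by trace'' is exactly the right instinct; the same reasoning applies to $h^{1/2}\avg{\nabla w_h}$, and that is the piece your argument misses.
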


\begin{lemma}[Consistency of $\cA_h(\cdot, \cdot)$]
  \label{lem:Galerkin-orthog}
  Let $u^n\in\H{2}{\domain}\cap\Honezero{\domain}$ solve (\ref{eq:dd-semi}), and let $u_h^n\in\finspace_p$ solve (\ref{eq:dg-disc}). Then
  \begin{equation}
    \cA_h(u^n - u_h^n, v_h) = \ip{u^{n-1} - u_h^{n-1}}{v_h} \Foreach v_h\in\finspace_p.
  \end{equation}
\end{lemma}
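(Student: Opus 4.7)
The plan is to reduce the claim to the single-variable consistency statement
\begin{equation}
\cA_h(u^n, v_h) = \ip{u^{n-1}}{v_h} \quad \forall v_h \in \finspace_p,
\end{equation}
since subtracting this from the discrete equation (\ref{eq:dg-disc}) directly yields the Galerkin orthogonality-type identity claimed. The key mechanism is that $u^n \in \H{2}{\domain} \cap \Honezero{\domain}$ makes all jump-related facet terms in $a_h$ and $b_h$ collapse, so that $\cA_h(u^n, \cdot)$ reproduces the strong-form of the semi-discrete PDE tested against $v_h$.

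First, I would exploit the regularity of $u^n$ to note that $\jump{u^n} = 0$, $\ujump{u^n} = 0$, and $\avg{u^n} = u^n$ on every interior facet (since $u^n$ is $H^1$-continuous across facets) and, by $u^n\in \H{2}{\domain}$, the componentwise normal trace of $\nabla u^n$ is single-valued so $\avg{\nabla u^n} = \nabla u^n$ on $\mathcal{E}_{\rm int}$. Applying elementwise integration by parts to the volume terms of $a_h(u^n, v_h)$ and using these identities, together with the strongly enforced vanishing trace of $v_h$ on $\bound$, I expect the inter-element surface contributions to cancel exactly against the consistency term $\int_{\mathcal E_{\rm int}}\jump{v_h}\cdot\avg{\nabla u^n}\d s$, while the penalty term vanishes because $\jump{u^n}=0$; the result is
\begin{equation}
a_h(u^n, v_h) = -\int_{\domain} \Delta u^n\, v_h \d \vec x.
\end{equation}

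The argument for $b_h$ is analogous: the upwind jump-penalty $\tfrac{\upen}{2}|\nabla\elpot\cdot\normal|\ujump{u^n}\ujump{v_h}$ vanishes, and elementwise integration by parts of $\int_K u^n\nabla\elpot\cdot\nabla v_h$ produces a facet sum that, because $\avg{u^n}=u^n$ and because $u^n = 0$ on $\bound$, exactly cancels the $(\nabla\elpot\cdot\normal)\avg{u^n}\ujump{v_h}$ term. This gives
\begin{equation}
b_h(u^n, v_h) = -\int_{\domain} \div{u^n\nabla\elpot}\, v_h \d \vec x.
\end{equation}
Combining these with the mass term, $\cA_h(u^n, v_h) = \ip{u^n - \timestep\left(\Delta u^n + \div{u^n\nabla\elpot}\right)}{v_h}$, and since $u^n \in \H{2}{\domain}$ satisfies the strong form of (\ref{eq:dd-semi}) pointwise a.e., the integrand equals $u^{n-1}$, yielding $\cA_h(u^n, v_h) = \ip{u^{n-1}}{v_h}$.

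The main obstacle is genuinely just bookkeeping: being careful with the sign conventions on normals when rewriting $\sum_K\int_{\partial K}$-type sums in terms of $\jump{\cdot}$, $\avg{\cdot}$, and $\ujump{\cdot}$ on $\mathcal E_{\rm int}$, and verifying that the boundary contributions on $\bound$ really do vanish (they do, by virtue of both $u^n\in\Honezero{\domain}$ and the strong imposition of zero trace in $\finspace_p$). No new analytical machinery is needed; this is the standard ``strong consistency'' of the SIP-plus-upwind discretisation, adapted to the drift term via an elementary integration by parts.
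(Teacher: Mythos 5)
Your proposal is correct and follows essentially the same route as the paper: both establish the intermediate consistency identity $\cA_h(u^n, v_h) = \ip{u^{n-1}}{v_h}$ for all $v_h\in\finspace_p$ and then subtract the discrete equation (\ref{eq:dg-disc}). The only difference is presentational: the paper asserts this step abstractly as ``consistency of the discrete operator $A_h$'' with the strong operator $A$ of the semi-discretisation, whereas you verify it explicitly by elementwise integration by parts using $u^n\in\H{2}{\domain}\cap\Honezero{\domain}$, which is precisely the computation the paper leaves implicit.
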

\begin{proof}
  Let $A : \H{2}{\domain}\longrightarrow\Ltwo{\domain}$ be the operator associated with the bilinear form $\cA(\cdot, \cdot)$. That is, for $w\in\H{2}{\domain}$,
  \begin{equation}
    \ip{Aw}{v} = \cA(w, v) \Foreach v\in\Honezero{\domain}.
  \end{equation}
  The strong form of the temporal semi-discretisation (\ref{eq:dd-semi}) then reads: for $u^0 := u_0\in\L{\infty}{\domain}$, with $u_0 \geq 0$, and $n = 1, 2, \ldots, N$, seek $u^n\in\H{2}{\domain}\cap\Honezero{\domain}$ such that
  \begin{equation}
    \label{eq:dd-semi-strong}
    \ip{Au^n}{v} = \ip{u^{n-1}}{v} \Foreach v\in\Ltwo{\domain}.
  \end{equation}
  Also, let $A_h : \finspace_p\longrightarrow\finspace_p$ be the operator associated with the bilinear form $\cA_h(\cdot, \cdot)$, i.e. for $w_h\in\finspace_p$,
  \begin{equation}
    \ip{A_hw_h}{v_h} = \cA_h(w_h, v_h) \Foreach v_h\in\finspace_p.
  \end{equation}
  As a result of the consistency of the discrete operator $A_h$, and using (\ref{eq:dd-semi-strong}), we then have
  \begin{equation}
    \label{eq:bilinear-form-consistency}
    \cA_h(u^n, v_h) = \ip{A_hu^n}{v_h} = \ip{Au^n}{v_h} = \ip{u^{n-1}}{v_h} \Foreach v_h\in\finspace_p.
  \end{equation}
  The result then follows by taking the difference of equations (\ref{eq:bilinear-form-consistency}) and (\ref{eq:dg-disc}).
\end{proof}

\begin{lemma}[A Strang-type lemma]
  \label{lem:Strang-type-lemma}
  For $n = 1, 2,\ldots, N$, let $u^n\in\H{2}{\domain}\cap\Honezero{\domain}$ solve (\ref{eq:dd-semi}) and $u_h^n\in\finspace_p$ solve (\ref{eq:dg-disc}). Also, let $\constcoerc$ and $\constbound$ be defined as in Lemma \ref{lem:dg-coerc} and Lemma \ref{lem:discrete-boundedness}, respectively. Under the assumptions of Lemma \ref{lem:dg-coerc}, we have
  \begin{equation}
    \enorm{u^n - u_h^n} \leq \left(1 + \frac{\constbound}{\constcoerc}\right)\inf_{w_h\in\finspace_p}\enorm{u^n - w_h}_*
    +
    \frac{1}{\constcoerc}
    \sup_{v_h \in \finspace_p}
    \frac{\ip{u^{n-1} - u_h^{n-1}}{v_h}}{\enorm{v_h}}.
  \end{equation}
\end{lemma}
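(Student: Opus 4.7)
The plan is to follow the standard Strang-type splitting argument, using coercivity of $\cA_h$ on $\finspace_p$ (from Lemma \ref{lem:dg-coerc}), boundedness of $\cA_h$ between the starred and unstarred energy norms (from Lemma \ref{lem:discrete-boundedness}), and the consistency identity (from Lemma \ref{lem:Galerkin-orthog}) as the glue that produces the second term on the right.

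First I would fix an arbitrary $w_h\in\finspace_p$ and write $u^n - u_h^n = (u^n - w_h) + (w_h - u_h^n)$. The triangle inequality in $\enorm{\cdot}$ then gives $\enorm{u^n - u_h^n} \leq \enorm{u^n - w_h} + \enorm{w_h - u_h^n}$, so that it suffices to bound the discrete difference $w_h - u_h^n\in\finspace_p$.

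Next I would apply coercivity (Lemma \ref{lem:dg-coerc}) to $w_h - u_h^n$, testing against itself:
\begin{equation}
\constcoerc\enorm{w_h - u_h^n}^2 \leq \cA_h(w_h - u_h^n, w_h - u_h^n).
\end{equation}
Writing $w_h - u_h^n = (w_h - u^n) + (u^n - u_h^n)$ inside the first argument and using bilinearity, the Galerkin-type consistency from Lemma \ref{lem:Galerkin-orthog} rewrites the second piece as a data-type term:
\begin{equation}
\cA_h(w_h - u_h^n, v_h) = \cA_h(w_h - u^n, v_h) + \ip{u^{n-1} - u_h^{n-1}}{v_h} \Foreach v_h\in\finspace_p.
\end{equation}
Taking $v_h = w_h - u_h^n$, applying boundedness (Lemma \ref{lem:discrete-boundedness}) to the first summand, and dividing through by $\enorm{w_h - u_h^n}$, I obtain
\begin{equation}
\constcoerc\enorm{w_h - u_h^n} \leq \constbound\enorm{w_h - u^n}_* + \sup_{v_h\in\finspace_p}\frac{\ip{u^{n-1} - u_h^{n-1}}{v_h}}{\enorm{v_h}}.
\end{equation}

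Finally I would combine this with the triangle inequality step and use that $\enorm{\cdot} \leq \enorm{\cdot}_*$ (immediate from the definition \eqref{eq:star-norm}) to absorb $\enorm{u^n - w_h}$ into $\enorm{u^n - w_h}_*$, yielding the factor $(1 + \constbound/\constcoerc)$. Since $w_h\in\finspace_p$ was arbitrary, taking the infimum produces the claimed estimate. The one point to handle carefully is the use of the starred norm on the interpolation side: Lemma \ref{lem:discrete-boundedness} requires the first argument to be controlled in $\enorm{\cdot}_*$ because $u^n$ is only in $\H{2}{\mathcal T}$ and its trace averages on interior facets need to be measured, whereas the second argument, a discrete function, only needs the weaker $\enorm{\cdot}$; this asymmetry is precisely what drives the appearance of the starred norm in the final bound, and is the only mildly delicate bookkeeping point of the argument.
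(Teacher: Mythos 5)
Your proposal is correct and follows essentially the same argument as the paper: triangle inequality about an arbitrary $w_h$, coercivity of $\cA_h$ applied to the discrete difference $w_h - u_h^n$, the consistency identity of Lemma \ref{lem:Galerkin-orthog} to convert the term $\cA_h(u^n - u_h^n, \cdot)$ into the data term, boundedness in the starred/unstarred norm pair, and finally the bound $\enorm{\cdot}\leq\enorm{\cdot}_*$ before taking the infimum. The remark on why the starred norm is needed only in the first argument matches the paper's use of Lemma \ref{lem:discrete-boundedness}.
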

\begin{proof}
  Let $w_h\in\finspace_p$. The triangle inequality gives
  \begin{equation}
    \label{eq:Strang-triangle-ineq}
    \enorm{u^n - u_h^n} \leq \enorm{u^n - w_h} + \enorm{w_h - u_h^n},
  \end{equation}
  and we now control the second term on the right-hand side. Using first Lemma \ref{lem:dg-coerc} and then Lemma \ref{lem:Galerkin-orthog}, it follows that
  \begin{align}
    \constcoerc\enorm{w_h - u_h^n}^2 &\leq \cA_h(w_h - u_h^n, w_h - u_h^n) \\
    &= \cA_h(w_h - u^n, w_h - u_h^n) + \cA_h(u^n - u_h^n, w_h - u_h^n) \\
    &= \cA_h(w_h - u^n, w_h - u_h^n) + \ip{u^{n-1} - u_h^{n-1}}{w_h - u_h^n}.
  \end{align}
  Lemma \ref{lem:discrete-boundedness} then gives
  \begin{align}
    \constcoerc\enorm{w_h - u_h^n}^2
    &\leq
    \enorm{w_h - u_h^n}
    \qp{
      \constbound\enorm{w_h - u^n}_*
      +
      \frac{\ip{u^{n-1} - u_h^{n-1}}{w_h - u_h^n}}
           {\enorm{w_h - u_h^n}}
    }
    \\
    &\leq
    \enorm{w_h - u_h^n}
    \left(\constbound\enorm{w_h - u^n}_*
    +
    \sup_{v_h \in \finspace_p}
    \frac{\ip{u^{n-1} - u_h^{n-1}}{v_h}}{\enorm{v_h}}
    \right),
  \end{align}
  and dividing by $\constcoerc\enorm{w_h - u_h^n}$ results in
  \begin{equation}
    \enorm{w_h - u_h^n}
    \leq
    \frac{\constbound}{\constcoerc}\enorm{w_h - u^n}_*
    +
    \frac{1}{\constcoerc}
    \sup_{v_h \in \finspace_p}
    \frac{\ip{u^{n-1} - u_h^{n-1}}{v_h}}{\enorm{v_h}}.
  \end{equation}
  Returning to (\ref{eq:Strang-triangle-ineq}), we thus have
  \begin{equation}
    \enorm{u^n - u_h^n} \leq \left(1 + \frac{\constbound}{\constcoerc}\right)\enorm{u^n - w_h}_*
    +
    \frac{1}{\constcoerc}
    \sup_{v_h \in \finspace_p}
    \frac{\ip{u^{n-1} - u_h^{n-1}}{v_h}}{\enorm{v_h}},
  \end{equation}
  and choosing $w_h\in\finspace_p$ to minimise $\enorm{u^n - w_h}_*$ completes the proof.
\end{proof}

\section{A bound-preserving method}
\label{sec:nodal-dg-disc}

As highlighted in Remark \ref{rem:M-matrix}, there is no guarantee that the solution to the discontinuous Galerkin discretisation (\ref{eq:dg-disc}) respects a discrete maximum principle, like the semi-discrete problem was shown to in Lemma \ref{lem:semi-discrete-maximum-principle}. In this section we adopt the approach first introduced for reaction-diffusion
equations in \cite{Barrenechea2023}, and extended to
reaction-advection-diffusion equations in \cite{Amiri2024}, whereby we
seek solutions that respect the bounds at, but not necessarily between,
element nodes. Figure \ref{fig:nodal-demo} demonstrates the concept
for piecewise linear and piecewise quadratic polynomials in one
dimension. We emphasise that for $p=1$ the solution satisfies the
bounds \emph{globally}, and not just at the nodes.
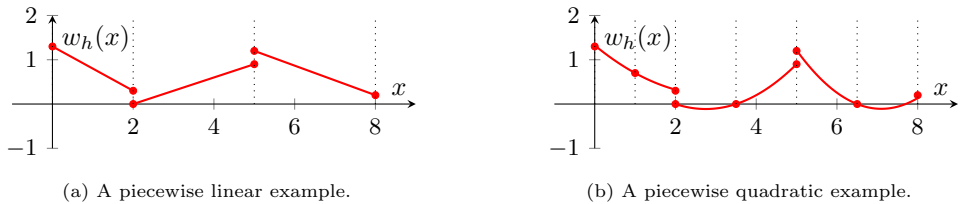
\begin{figure}[h]
  \centering
  \subcaptionbox{A piecewise linear example.}
  {\begin{tikzpicture}
  \begin{axis}[
    scale = 0.4,
    axis lines = middle,
    xlabel = {$x$},
    ylabel = {$w_h(x)$},
    tick label style={font=\small},
    xmin = -1, xmax = 9,
    ymin = -1, ymax = 2,
    domain = -1:9,
    width = 15cm,
    height = 6cm,
    clip = false,
    legend pos = outer north east,
    samples = 100,
    %x tick label style={/pgf/number format/fixed},
    %y tick label style={/pgf/number format/fixed},
]

% Define quadratic functions
% Element K1
\addplot [
    domain=0:2,
    thick,
    red,
    samples=100
] {x*(0.3-1.3)/(2-0)+1.3};
%\addlegendentry{$K_1$}

% Element K2
\addplot [
    domain=2:5,
    thick,
    red,
    samples=100
] {(x-2)*(0.9)/(5-2)};
%\addlegendentry{$K_2$}

% Element K3
\addplot [
    domain=5:8,
    thick,
    red,
    samples=100
] {(x-5)*(0.2-1.2)/(8-5) + 1.2};
%\addlegendentry{$K_3$}

% Node markings
\filldraw[red] (0,1.3) circle (1.3pt); % node[above left] {$1.3$};
\filldraw[red] (2,0.3) circle (1.3pt); % node[above right] {$0.3$};

\filldraw[red] (2,0) circle (1.3pt); % node[below left] {$0$};
\filldraw[red] (5,0.9) circle (1.3pt); % node[above left] {$0.9$};

\filldraw[red] (5,1.2) circle (1.3pt); % node[above right] {$1.2$};
\filldraw[red] (8,0.2) circle (1.3pt); % node[above right] {$0.2$};

% Draw vertical dotted lines for node locations and boundaries
\draw[dotted] (0,0) -- (0,2);
\draw[dotted] (2,0) -- (2,2);
\draw[dotted] (5,0) -- (5,2);
\draw[dotted] (8,0) -- (8,2);

\end{axis}
\end{tikzpicture}

%%% Local Variables:
%%% mode: latex
%%% TeX-master: "BAIL_proceedings"
%%% End:}%
  \hspace{4em}
  \subcaptionbox{A piecewise quadratic example.}
  {\begin{tikzpicture}
  \begin{axis}[
    scale = 0.4,
    axis lines = middle,
    xlabel = {$x$},
    ylabel = {$w_h(x)$},
    tick label style={font=\small},
    xmin = -1, xmax = 9,
    ymin = -1, ymax = 2,
    domain = -1:9,
    width = 15cm,
    height = 6cm,
    clip = false,
    legend pos = outer north east,
    samples = 100,
    %x tick label style={/pgf/number format/fixed},
    %y tick label style={/pgf/number format/fixed},
]

% Define quadratic functions
% Element K1
\addplot [
    domain=0:2,
    thick,
    red,
    samples=100
] {0.5*(-0.5*x+1.5)^2+0.2};
%\addlegendentry{$K_1$}

% Element K2
\addplot [
    domain=2:5,
    thick,
    red,
    samples=100
] {0.2*(x-2)*(x-3.5)};
%\addlegendentry{$K_2$}

% Element K3
\addplot [
    domain=5:8,
    thick,
    red,
    samples=100
] {0.3*(x-6.5)*(x-7.7)};
%\addlegendentry{$K_3$}

% Node markings
\filldraw[red] (0,1.3) circle (1.3pt); % node[above left] {$1.3$};
\filldraw[red] (1,0.7) circle (1.3pt); % node[above right] {$0.7$};
\filldraw[red] (2,0.3) circle (1.3pt); % node[above right] {$0.3$};

\filldraw[red] (2,0) circle (1.3pt); % node[below left] {$0$};
\filldraw[red] (3.5,0) circle (1.3pt); % node[below] {$0$};
\filldraw[red] (5,0.9) circle (1.3pt); % node[above left] {$0.9$};

\filldraw[red] (5,1.2) circle (1.3pt); % node[above right] {$1.2$};
\filldraw[red] (6.5,0) circle (1.3pt); % node[below] {$0$};
\filldraw[red] (8,0.2) circle (1.3pt); % node[above right] {$0.2$};

% Draw vertical dotted lines for node locations and boundaries
\draw[dotted] (0,0) -- (0,2);
\draw[dotted] (1,0) -- (1,2);
\draw[dotted] (2,0) -- (2,2);
\draw[dotted] (3.5,0) -- (3.5,2);
\draw[dotted] (5,0) -- (5,2);
\draw[dotted] (6.5,0) -- (6.5,2);
\draw[dotted] (8,0) -- (8,2);

\end{axis}
\end{tikzpicture}

%%% Local Variables:
%%% mode: latex
%%% TeX-master: "BAIL_proceedings"
%%% End:}
  \caption{Examples of piecewise polynomial functions, on a non-uniform one-dimensional mesh, which are nodally non-negative. The dotted lines denote the locations of the Lagrange nodes of the elements. For piecewise linear functions the non-negativity is global, but for higher-order polynomials the function may go negative between the nodes.}
  \label{fig:nodal-demo}
\end{figure}

Let us begin by defining the closed convex subset $\finspace_p^+\subseteq\finspace_p$ by restricting finite element functions at the Lagrange nodes of each element, denoted $\mathcal{L}_K$ for all $K\in\triang$, to be between zero and the supremal value of the initial condition:
\begin{equation}
  \finspace_p^+ := \left\{w_h\in\finspace_p : 0 \leq w_h\!\left(\node_i\right) \leq \norm{u_0}{\L{\infty}{\domain}},\text{ for all }\node_i\in\mathcal{L}_K,\text{ for all }K\in\triang\right\}.
\end{equation}
\begin{remark}
  The use of the upper limit $\norm{u_0}{\Linf{\domain}}$ is rather coarse and does not provide control as the solution decays towards zero. Given that the problem at each time step satisfies an elliptic maximum principle, one could instead consider a family of convex sets where the upper limit is given by $\norm{u^{n-1}}{\Linf{\domain}}$, providing sharper bounds as the solution evolves. At the discrete level $\norm{u_h^{n-1}}{\Linf{\domain}}$ could be used, with implementations requiring only an upper limit update step to be added in the solving routine. Time-dependence in the definition of the convex set is also considered in the companion paper \cite{Amiri2025nodally}.
\end{remark}

We then pose the following method, where we write $\qp{u_h^n}^+\in\finspace_p^+$ to distinguish the nodally bound-preserving (or \emph{constrained}) solution from the \emph{unconstrained} solution $u_h^n\in\finspace_p$ to (\ref{eq:dg-disc}).

Given $\elpot\in\Sob{2}{\infty}{\domain}$ and initial data $0 \leq \ddvarinit\in\L{\infty}{\domain}$, so $\finddvar^0 = \interp{\ddvarinit}\in\finspace_p$, seek $\qp{\finddvar^n}^+\in\finspace_p^+$, for $n = 1, 2, \ldots, N$, such that
\begin{equation}
  \label{eq:drift-diffusion-full-disc}
  \cA_h\qp{\qp{\finddvar^n}^+, \finddtest - \qp{\finddvar^n}^+} \geq \ip{\qp{\finddvar^{n-1}}^+}{\finddtest - \qp{\finddvar^n}^+}\quad\forall\finddtest\in\finspace_p^+.
\end{equation}

\begin{remark}
  A discrete approximation to the initial condition in $\finspace_p^+$ can be obtained using a bound-preserving operator $\Pi^+_h : \Ltwo{\domain}\longrightarrow\finspace_p^+$.  If $\ddvarinit$ is smooth enough to admit point values we may use the Lagrange interpolant. If $\ddvarinit$ is less smooth then other constructions are available (see e.g. \cite{ashby2024duality, Chen2000, Veeser2019}).
\end{remark}

\begin{lemma}
  Let $\elpot\in\Sob{2}{\infty}{\domain}$ and $0 \leq \ddvarinit\in\L{\infty}{\domain}$. Suppose the conditions of Lemma \ref{lem:dg-coerc} are satisfied. Then, for $n = 1, 2, \ldots, N$, there exists a unique $\qp{\finddvar^n}^+\in\finspace_p^+$ that solves (\ref{eq:drift-diffusion-full-disc}).
\end{lemma}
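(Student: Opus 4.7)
The plan is to apply the Lions--Stampacchia theorem for variational inequalities driven by a non-symmetric, continuous, coercive bilinear form on a nonempty closed convex subset of a Hilbert space, proceeding by induction on the time index $n$.

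First, I would verify that $\finspace_p^+$ is a nonempty closed convex subset of $\finspace_p$. It contains the zero function, so it is nonempty. Since $\finspace_p$ is finite-dimensional it is a Hilbert space under the energy norm $\enorm{\cdot}$, and $\finspace_p^+$ is the intersection of finitely many closed affine half-spaces determined by pointwise inequalities at the Lagrange nodes, hence closed and convex.

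Next I would assemble the analytic properties required by the theorem. Coercivity of $\cA_h$ on $(\finspace_p, \enorm{\cdot})$ under the assumptions of Lemma \ref{lem:dg-coerc} is exactly the statement of that lemma. Continuity of $\cA_h$ on $\finspace_p \times \finspace_p$ follows from Lemma \ref{lem:discrete-boundedness} together with the equivalence of $\enorm{\cdot}$ and $\enorm{\cdot}_*$ on the finite-dimensional space $\finspace_p$. Continuity of the linear functional $v_h \mapsto \ip{\qp{\finddvar^{n-1}}^+}{v_h}$ on $(\finspace_p, \enorm{\cdot})$ follows from the Cauchy--Schwarz inequality. The induction is then straightforward: for the base case, $\qp{\finddvar^0}^+ := \interp{\ddvarinit}$ lies in $\finspace_p^+$ because $0 \leq \ddvarinit \leq \norm{u_0}{\Linf{\domain}}$ transfers to the Lagrange nodes; for the inductive step, assuming $\qp{\finddvar^{n-1}}^+ \in \finspace_p^+$ has already been produced, the Lions--Stampacchia theorem delivers a unique $\qp{\finddvar^n}^+ \in \finspace_p^+$ solving (\ref{eq:drift-diffusion-full-disc}).

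The only subtle point, rather than a genuine obstacle, is the non-symmetry of $\cA_h$ inherited from the upwind bilinear form $b_h$, which rules out a direct energy-minimisation argument over $\finspace_p^+$. This is handled by the non-symmetric form of Stampacchia's theorem, whose proof reduces the problem to a Banach fixed-point argument applied to the projection map $w_h \mapsto P_{\finspace_p^+}\!\left(w_h - \rho(A_h w_h - L)\right)$ for sufficiently small $\rho > 0$, where $A_h$ and $L$ denote, respectively, the Riesz operator associated with $\cA_h$ and the Riesz representer of the right-hand side in $(\finspace_p, \enorm{\cdot})$. The coercivity and continuity constants supplied by Lemmata \ref{lem:dg-coerc} and \ref{lem:discrete-boundedness} guarantee that this map is a contraction for an appropriate choice of $\rho$.
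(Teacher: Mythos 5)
Your proposal is correct and follows essentially the same route as the paper, which simply invokes the coercivity of $\cA_h(\cdot,\cdot)$ from Lemma \ref{lem:dg-coerc} and then applies Stampacchia's theorem for (non-symmetric) variational inequalities on the closed convex set $\finspace_p^+$. Your additional verifications (closed convexity of $\finspace_p^+$, continuity via Lemma \ref{lem:discrete-boundedness} and finite-dimensionality, and the induction supplying $\qp{\finddvar^{n-1}}^+$) are details the paper leaves implicit, but they do not change the argument.
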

\begin{proof}
  By Lemma \ref{lem:dg-coerc}, the bilinear form $\cA_h(\cdot, \cdot)$ is coercive on $\finspace_p$ under the stated assumptions. The result then follows by applying Stampacchia's Theorem (\cite[pg. 24, Theorem 2.1]{Kinderlehrer1980}).
\end{proof}

By design, the solution of (\ref{eq:drift-diffusion-full-disc}) is
nodally bound-preserving. We now see that it also satisfies a discrete energy decay, which is similar to Lemma \ref{lem:dg-disc-energy}.

\begin{lemma}[Discrete stability]
  \label{lem:drift-diffusion-full-disc-energy}
  Let the conditions of Lemma \ref{lem:dg-coerc} be satisfied. For $n = 1, 2, \ldots, N$, let $\qp{\finddvar^n}^+\in\finspace_p^+$ be the solution to (\ref{eq:drift-diffusion-full-disc}), with $0 \leq u_0\in\L{\infty}{\domain}$. Then
  \begin{equation}
    \begin{aligned}
      \half\norm{\qp{u_h^n}^+}{\Ltwo{\domain}}^2 \leq &\half\norm{\qp{u_h^{n-1}}^+}{\Ltwo{\domain}}^2 - \timestep\constsipcoerc\norm{\qp{u_h^n}^+}{\operatorname{sip}}^2 - \frac{\timestep\upen}{2}\norm{\lvert\nabla\elpot\cdot\normal\rvert^{1/2}\ujump{\qp{u_h^n}^+}}{\Ltwo{\mathcal{E}_{\rm{int}}}}^2\\
                                               &- \timestep\sum_{K\in\mathcal T}\qp{-\half\int_K\qp{\qp{u_h^n}^+}^2\Delta\elpot \d\point},
    \end{aligned}
  \end{equation}
  and the sequence $\norm{\qp{u_h^n}^+}{\Ltwo{\domain}}^2$ is monotonically decreasing if $\Delta\elpot \leq 0$.
\end{lemma}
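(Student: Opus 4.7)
The plan is to mimic the energy argument used for Lemma \ref{lem:dg-disc-energy}, but to work around the fact that the variational inequality \eqref{eq:drift-diffusion-full-disc} does not allow us to test against $\qp{u_h^n}^+$ itself. The key observation is that the zero function lies in $\finspace_p^+$ (since $0 \leq 0 \leq \norm{u_0}{\Linf{\domain}}$ holds at every Lagrange node), so it is an admissible test function.

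Choosing $v_h = 0$ in \eqref{eq:drift-diffusion-full-disc} gives
\begin{equation*}
  -\cA_h\qp{\qp{u_h^n}^+, \qp{u_h^n}^+} \geq -\ip{\qp{u_h^{n-1}}^+}{\qp{u_h^n}^+},
\end{equation*}
or equivalently $\cA_h\qp{\qp{u_h^n}^+, \qp{u_h^n}^+} \leq \ip{\qp{u_h^{n-1}}^+}{\qp{u_h^n}^+}$. This converts the variational inequality into an energy-type inequality, after which the argument proceeds exactly as for the unconstrained solution.

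Next I would expand the left-hand side using the definition of $\cA_h(\cdot, \cdot)$, invoke the SIP coercivity bound from Lemma \ref{lem:sip-coerc} on $a_h(\qp{u_h^n}^+, \qp{u_h^n}^+)$, and use the integration-by-parts identity \eqref{eq:upwind-coerc} derived in the proof of Lemma \ref{lem:dg-disc-energy} to rewrite $b_h(\qp{u_h^n}^+, \qp{u_h^n}^+)$ as the sum of an element-wise $-\tfrac{1}{2}\int_K (\qp{u_h^n}^+)^2 \Delta\elpot$ contribution and a non-negative jump penalty. On the right-hand side, Cauchy-Schwarz followed by Young's inequality in the form $ab \leq \half a^2 + \half b^2$ yields
\begin{equation*}
  \ip{\qp{u_h^{n-1}}^+}{\qp{u_h^n}^+} \leq \half\norm{\qp{u_h^{n-1}}^+}{\Ltwo{\domain}}^2 + \half\norm{\qp{u_h^n}^+}{\Ltwo{\domain}}^2,
\end{equation*}
so that the $\half\norm{\qp{u_h^n}^+}{\Ltwo{\domain}}^2$ contribution absorbs into the left-hand side.

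Collecting terms gives the stated energy inequality. The monotone decay claim under $\Delta\elpot \leq 0$ then follows because, in that case, every term on the right-hand side (other than $\half\norm{\qp{u_h^{n-1}}^+}{\Ltwo{\domain}}^2$) is non-positive. There is no real obstacle here; the only subtlety is recognising that $0 \in \finspace_p^+$ so that the variational inequality immediately yields a usable energy bound without any need to modify the arguments used previously.
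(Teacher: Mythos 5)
Your proposal is correct and follows the paper's own argument exactly: the paper likewise tests the variational inequality (\ref{eq:drift-diffusion-full-disc}) with $\finddtest = 0$ (admissible since $0\in\finspace_p^+$) to obtain $\cA_h\qp{\qp{\finddvar^n}^+,\qp{\finddvar^n}^+} \leq \ip{\qp{\finddvar^{n-1}}^+}{\qp{\finddvar^n}^+}$, and then repeats the argument of Lemma \ref{lem:dg-disc-energy} using Lemma \ref{lem:sip-coerc}, the identity (\ref{eq:upwind-coerc}), and Cauchy--Schwarz with Young's inequality. No gaps.
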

\begin{proof}
  Testing with $\finddtest = 0$ in (\ref{eq:drift-diffusion-full-disc}), we obtain
  \begin{equation}
    \cA_h\qp{\qp{\finddvar^n}^+, \qp{\finddvar^n}^+} \leq \ip{\qp{\finddvar^{n-1}}^+}{\qp{\finddvar^n}^+},
  \end{equation}
  and the result then follows the same argument as Lemma \ref{lem:dg-disc-energy}.
\end{proof}

\subsection{Higher-order time discretisations}

Higher-order time discretisations can also be posed whilst retaining the structure preservation properties. For example, an unconstrained second-order Crank-Nicolson method would read as follows. Given $\elpot\in\Sob{2}{\infty}{\domain}$ and $0 \leq \ddvarinit\in\L{\infty}{\domain}$, so $\finddvar^0 = \interp{\ddvarinit}\in\finspace_p$, seek $\finddvar^n\in\finspace_p$, for $n = 1, 2, \ldots, N$, such that
\begin{equation}
  \label{eq:Crank-Nicolson-disc}
  \ip{u_h^n}{v_h} + \timestep a_h\left(\frac{u_h^n + u_h^{n-1}}{2}, v_h\right) + \timestep b_h\left(\frac{u_h^n + u_h^{n-1}}{2}, v_h\right) = \ip{u_h^{n-1}}{v_h} \Foreach v_h\in\finspace_p.
\end{equation}

Then the following stability result, which is an analogue of Lemma \ref{lem:dg-disc-energy} holds.
\begin{lemma}
  Let the conditions of Lemma \ref{lem:dg-coerc} be satisfied, and let $\finddvar^n\in\finspace_p$ solve (\ref{eq:Crank-Nicolson-disc}), for $n = 1, 2, \ldots, N$. Then
  \begin{equation}
    \begin{aligned}
      \half\norm{u_h^n}{\Ltwo{\domain}}^2 \leq &\half\norm{u_h^{n-1}}{\Ltwo{\domain}}^2 - \timestep\constsipcoerc\norm{\frac{u_h^n + u_h^{n-1}}{2}}{\operatorname{sip}}^2 - \frac{\timestep\upen}{2}\norm{\lvert\nabla\elpot\cdot\normal\rvert^{1/2}\ujump{\frac{u_h^n + u_h^{n-1}}{2}}}{\Ltwo{\mathcal{E}_{\rm{int}}}}^2\\
                                               &- \timestep\sum_{K\in\mathcal T}\qp{-\half\int_K\left(\frac{u_h^n + u_h^{n-1}}{2}\right)^2\Delta\elpot \d\point},
    \end{aligned}
  \end{equation}
  which is monotonically decreasing if $\Delta\elpot \leq 0$.
\end{lemma}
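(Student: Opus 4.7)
The plan is to mimic the proof of Lemma \ref{lem:dg-disc-energy} almost verbatim, but with the natural Crank-Nicolson test function $v_h = \tfrac{1}{2}(u_h^n + u_h^{n-1})$ in place of $u_h^n$. The key algebraic observation is that
\begin{equation}
  \ip{u_h^n - u_h^{n-1}}{\tfrac{1}{2}(u_h^n + u_h^{n-1})} = \tfrac{1}{2}\left(\norm{u_h^n}{\Ltwo{\domain}}^2 - \norm{u_h^{n-1}}{\Ltwo{\domain}}^2\right),
\end{equation}
so testing (\ref{eq:Crank-Nicolson-disc}) with this choice converts the temporal increment into a clean difference of squared $L^2$ norms, exactly as needed.

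After making this substitution, I would rearrange (\ref{eq:Crank-Nicolson-disc}) to the identity
\begin{equation}
  \tfrac{1}{2}\norm{u_h^n}{\Ltwo{\domain}}^2 + \timestep\, a_h\!\left(\tfrac{u_h^n + u_h^{n-1}}{2}, \tfrac{u_h^n + u_h^{n-1}}{2}\right) + \timestep\, b_h\!\left(\tfrac{u_h^n + u_h^{n-1}}{2}, \tfrac{u_h^n + u_h^{n-1}}{2}\right) = \tfrac{1}{2}\norm{u_h^{n-1}}{\Ltwo{\domain}}^2.
\end{equation}
Then I would apply Lemma \ref{lem:sip-coerc} to the $a_h$ term to extract $\constsipcoerc \norm{\tfrac{1}{2}(u_h^n+u_h^{n-1})}{\operatorname{sip}}^2$, and reuse the integration-by-parts identity (\ref{eq:upwind-coerc}) on the $b_h$ term to produce both the element-wise $-\tfrac{1}{2}\int_K(\cdot)^2\Delta\elpot$ contribution and the upwind jump penalty term. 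Transposing these non-negative contributions (under the coercivity hypothesis of Lemma \ref{lem:dg-coerc}) to the right-hand side as negative quantities yields precisely the stated inequality.

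The monotonicity conclusion under $\Delta\elpot \leq 0$ is then immediate: every term on the right-hand side, apart from $\tfrac{1}{2}\norm{u_h^{n-1}}{\Ltwo{\domain}}^2$, is non-positive, so $\norm{u_h^n}{\Ltwo{\domain}}^2 \leq \norm{u_h^{n-1}}{\Ltwo{\domain}}^2$.

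There is no real obstacle here since all the heavy lifting was done in Lemma \ref{lem:dg-disc-energy}; the only subtlety worth flagging is that the Cauchy-Schwarz/Young step used in the backward Euler proof is unnecessary because the Crank-Nicolson test function already produces the difference of $L^2$ norms exactly, without any cross-term that needs to be absorbed. The proof is therefore essentially a one-line reduction: test with $\tfrac{1}{2}(u_h^n + u_h^{n-1})$ and invoke Lemmata \ref{lem:sip-coerc} and the identity (\ref{eq:upwind-coerc}).
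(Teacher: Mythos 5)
Your proposal is correct and matches the paper's proof: the paper likewise tests (\ref{eq:Crank-Nicolson-disc}) with $v_h = \tfrac{1}{2}(u_h^n + u_h^{n-1})$, uses the resulting difference of squared $L^2$ norms, and concludes via Lemma \ref{lem:sip-coerc} and the identity (\ref{eq:upwind-coerc}). Your remark that no Cauchy--Schwarz/Young step is needed here, in contrast to the backward Euler case, is also accurate.
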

\begin{proof}
  Choosing $v_h = \half\left(u_h^n + u_h^{n-1}\right)$ in equation (\ref{eq:Crank-Nicolson-disc}) gives
  \begin{equation}
    \half\norm{u_h^n}{\Ltwo{\domain}}^2 = \half\norm{u_h^{n-1}}{\Ltwo{\domain}}^2 - \timestep a_h\left(\frac{u_h^n + u_h^{n-1}}{2}, \frac{u_h^n + u_h^{n-1}}{2}\right) - \timestep b_h\left(\frac{u_h^n + u_h^{n-1}}{2}, \frac{u_h^n + u_h^{n-1}}{2}\right),
  \end{equation}
  and the result then follows by Lemma \ref{lem:sip-coerc} and (\ref{eq:upwind-coerc}).
\end{proof}

A corresponding nodally bound-preserving Crank-Nicolson discretisation can be posed and the related analogue to Lemma \ref{lem:drift-diffusion-full-disc-energy} proceeds in a similar fashion to the backward Euler case.

\begin{remark}
  The methods presented herein can be combined with other higher-order dissipative-type time discretisations, such as dG-in-time approaches \cite{Thomee2006Galerkin}, backward differentiation formulae (BDF) \cite{Butcher2008numerical}, and implicit-explicit additive Runge--Kutta (ImEx ARK) methods \cite{Kennedy2003additive}. In each case, it should be possible to guarantee structure preservation.
\end{remark}

\subsection{A priori error analysis}

The analysis for the nodally bound-preserving method follows from that of Section \ref{sec:dg-a-priori} without much modification. The main difference lies in the following lemma.
\begin{lemma}
  \label{lem:nodal-Galerkin-orthog}
  For $n = 1, 2,\ldots,N$, let $u^n\in\H{2}{\domain}\cap\Honezero{\domain}$ solve (\ref{eq:dd-semi}) and $\qp{u_h^n}^+\in\finspace_p^+$ solve (\ref{eq:drift-diffusion-full-disc}). Then
  \begin{equation}
    \cA_h\qp{u^n - \qp{u_h^n}^+, v_h - \qp{u_h^n}^+} \leq \ip{u^{n-1} - \qp{u_h^{n-1}}^+}{v_h - \qp{u_h^n}^+} \Foreach v_h\in\finspace_p^+.
  \end{equation}
\end{lemma}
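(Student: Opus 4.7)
The plan is to mimic the standard derivation of Galerkin orthogonality, but with the variational inequality replacing the equality for the discrete problem; since $(u_h^n)^+$ no longer satisfies a bilinear equation but only a one-sided inequality, Galerkin orthogonality itself must be weakened to an inequality of the same sign.

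First I would recall the consistency identity already established in the course of proving Lemma \ref{lem:Strang-type-lemma}, namely that the continuous semi-discrete solution $u^n \in \H{2}{\domain}\cap\Honezero{\domain}$ satisfies
\begin{equation}
  \cA_h(u^n, w_h) = \ip{u^{n-1}}{w_h} \qquad \forall\, w_h \in \finspace_p,
\end{equation}
exactly as in \eqref{eq:bilinear-form-consistency}. The key observation is that $\finspace_p^+ \subseteq \finspace_p$, so that for any $v_h \in \finspace_p^+$ the test function $w_h := v_h - (u_h^n)^+$ lies in $\finspace_p$ (it is a difference of two elements of $\finspace_p$, which is a linear space). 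Specialising the consistency identity to this $w_h$ yields
\begin{equation}
  \cA_h\!\qp{u^n,\, v_h - \qp{u_h^n}^+}
  =
  \ip{u^{n-1}}{v_h - \qp{u_h^n}^+}.
\end{equation}

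Next I would invoke the defining variational inequality \eqref{eq:drift-diffusion-full-disc} with the same test function $v_h \in \finspace_p^+$:
\begin{equation}
  \cA_h\!\qp{\qp{u_h^n}^+,\, v_h - \qp{u_h^n}^+}
  \geq
  \ip{\qp{u_h^{n-1}}^+}{v_h - \qp{u_h^n}^+}.
\end{equation}
Subtracting this inequality from the consistency identity, and using bilinearity of $\cA_h(\cdot,\cdot)$ in the first argument together with linearity of the $L^2$ inner product, flips the inequality sign and gives
\begin{equation}
  \cA_h\!\qp{u^n - \qp{u_h^n}^+,\, v_h - \qp{u_h^n}^+}
  \leq
  \ip{u^{n-1} - \qp{u_h^{n-1}}^+}{v_h - \qp{u_h^n}^+},
\end{equation}
which is the claim.

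The only delicate point, and the one to keep in mind throughout, is that one cannot test with an arbitrary $w_h \in \finspace_p$ on the variational-inequality side, so the two sides must be combined via a common test function $v_h - (u_h^n)^+$ obtained from an element of the convex subset $\finspace_p^+$. This is why the statement is an inequality (rather than the equality of Lemma \ref{lem:Galerkin-orthog}) and why the quantifier runs over $\finspace_p^+$ rather than all of $\finspace_p$. No regularity beyond that already assumed for $u^n$ and no further structural hypothesis on $\elpot$ is needed.
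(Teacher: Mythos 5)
Your proposal is correct and follows exactly the paper's own argument: apply the consistency identity \eqref{eq:bilinear-form-consistency} with the test function $v_h - \qp{u_h^n}^+$, subtract the variational inequality \eqref{eq:drift-diffusion-full-disc}, and combine using bilinearity. No gaps.
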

\begin{proof}
  From (\ref{eq:bilinear-form-consistency}) it follows that
  \begin{equation}
    \cA_h\qp{u^n, v_h - \qp{u_h^n}^+} = \ip{u^{n-1}}{v_h - \qp{u_h^n}^+} \Foreach v_h\in\finspace_p^+,
  \end{equation}
  and (\ref{eq:drift-diffusion-full-disc}) implies that
  \begin{equation}
    -\cA_h\qp{\qp{u_h^n}^+, v_h - \qp{u_h^n}^+} \leq -\ip{\qp{u_h^{n-1}}^+}{v_h - \qp{u_h^n}^+} \Foreach v_h\in\finspace_p^+.
  \end{equation}
  Combining these yields the result.
\end{proof}

The proof of the following corollary now proceeds in the same way as that of Lemma \ref{lem:Strang-type-lemma}, with Lemma \ref{lem:nodal-Galerkin-orthog} replacing Lemma \ref{lem:Galerkin-orthog} as appropriate.
\begin{corollary}
  \label{cor:nodal-Strang-type-lemma}
  For $n = 1, 2,\ldots,N$, let $u^n\in\H{2}{\domain}\cap\Honezero{\domain}$ solve (\ref{eq:dd-semi}), let $\qp{u_h^n}^+\in\finspace_p^+$ solve (\ref{eq:drift-diffusion-full-disc}), and let $\constcoerc$ and $\constbound$ be defined as in Lemma \ref{lem:dg-coerc} and Lemma \ref{lem:discrete-boundedness}, respectively. Under the assumptions of Lemma \ref{lem:dg-coerc}, we have
  \begin{equation}
    \enorm{u^n - \qp{u_h^n}^+} \leq \left(1 + \frac{\constbound}{\constcoerc}\right)\inf_{w_h\in\finspace_p^+}\enorm{u^n - w_h}_* + \frac{1}{\constcoerc}\sup_{v_h\in\finspace_p}\frac{\ip{u^{n-1} - \qp{u_h^{n-1}}^+}{v_h}}{\enorm{v_h}}.
  \end{equation}
\end{corollary}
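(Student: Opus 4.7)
The plan is to mirror the proof of Lemma \ref{lem:Strang-type-lemma}, substituting the variational inequality of Lemma \ref{lem:nodal-Galerkin-orthog} for the Galerkin orthogonality of Lemma \ref{lem:Galerkin-orthog} at the appropriate step. Because Lemma \ref{lem:nodal-Galerkin-orthog} requires the test function to lie in $\finspace_p^+$, the comparison element introduced through the triangle inequality must be drawn from $\finspace_p^+$ rather than the full space $\finspace_p$, which is precisely why the final infimum is taken over $\finspace_p^+$.

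In detail, I would fix an arbitrary $w_h\in\finspace_p^+$ and begin with the triangle inequality $\enorm{u^n - \qp{u_h^n}^+} \leq \enorm{u^n - w_h} + \enorm{w_h - \qp{u_h^n}^+}$. Since $w_h - \qp{u_h^n}^+\in\finspace_p$, Lemma \ref{lem:dg-coerc} yields
\[
\constcoerc\enorm{w_h - \qp{u_h^n}^+}^2 \leq \cA_h\qp{w_h - \qp{u_h^n}^+, w_h - \qp{u_h^n}^+}.
\]
Inserting and subtracting $u^n$ in the first argument splits this right-hand side into $\cA_h\qp{w_h - u^n, w_h - \qp{u_h^n}^+}$, to which I apply the boundedness of Lemma \ref{lem:discrete-boundedness}, and $\cA_h\qp{u^n - \qp{u_h^n}^+, w_h - \qp{u_h^n}^+}$, which Lemma \ref{lem:nodal-Galerkin-orthog} bounds above (not equal, as in the unconstrained case) by $\ip{u^{n-1} - \qp{u_h^{n-1}}^+}{w_h - \qp{u_h^n}^+}$. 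Noting that $w_h - \qp{u_h^n}^+\in\finspace_p$, this inner product is controlled by $\enorm{w_h - \qp{u_h^n}^+}\cdot \sup_{v_h\in\finspace_p}\ip{u^{n-1} - \qp{u_h^{n-1}}^+}{v_h}/\enorm{v_h}$. Dividing by $\constcoerc\enorm{w_h - \qp{u_h^n}^+}$, reinserting into the triangle inequality, and taking the infimum over $w_h\in\finspace_p^+$ yields the stated bound.

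The only genuinely delicate point is that Lemma \ref{lem:nodal-Galerkin-orthog} produces a one-sided inequality rather than an equality, so one must verify that its orientation is compatible with the coercivity estimate. It is: the argument uses it to \emph{upper bound} $\cA_h\qp{u^n - \qp{u_h^n}^+, w_h - \qp{u_h^n}^+}$, which is exactly what is needed to close the coercivity chain. No sign difficulties arise, and the rest is a verbatim transcription of the Strang argument, with the sup correctly extended to $\finspace_p$ since $\finspace_p^+\subseteq\finspace_p$.
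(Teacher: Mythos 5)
Your proposal is correct and matches the paper's argument: the paper itself states that the corollary is proved exactly as Lemma \ref{lem:Strang-type-lemma}, with Lemma \ref{lem:nodal-Galerkin-orthog} replacing Lemma \ref{lem:Galerkin-orthog}, which is precisely your strategy. You also correctly identify the two points where care is needed — the comparison element must be taken from $\finspace_p^+$ (hence the infimum over the convex subset) and the one-sided inequality from the variational formulation is oriented in the direction needed to close the coercivity estimate.
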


\section{Numerical Experiments}
\label{sec:numerics}

The nodally bound-preserving scheme (\ref{eq:drift-diffusion-full-disc}) was implemented in the FEniCS (Legacy) software \cite{Logg2010, Logg2012}, with the variational inequality treated at each time step via the classical extragradient method, which we now describe \cite{Korpelevich1976}. This simple-to-implement method is by no means the only way to solve variational inequalities; for further details, we point to the book \cite{Facchinei2003finite-dimensional} and the recent review paper \cite{Beznosikov2023smooth}, which is focused towards stochastic approaches but also presents deterministic algorithms. In particular, we highlight the recent work \cite{Ashby2025nodally}, where an active set method is used to solve the variational inequality arising from the nodally bound-preserving method applied to hyperbolic advection-reaction problems. Although that study concerns a continuous Galerkin method, active set approaches have also found success with dG methods \cite{Wang2010discontinuous, Gudi2014aposteriori}.

Let $\vec A$ and $\vec L$ be the assembled finite element stiffness matrix and load vector, respectively, let $\gamma > 0$, and let $\mathcal{P}$ be the realisation of a projection of $u_h\in\finspace_p$ into $\finspace_p^+$, which we define, for all $K\in\triang$ and $\node_i\in\mathcal{L}_K$, by
\begin{equation}
  \mathcal{P}\qp{u_h\qp{\node_i}} := \min\qp{\norm{u_0}{\L{\infty}{\domain}}, \max\qp{0, u_h\qp{\node_i}}}.
\end{equation}
In Section \ref{ex:drift-diffusion-structure-preservation-initial-conditions} we observe the two-sided bound, however, in the other examples we remove the upper limit, since only positivity of the solutions can be shown. Taking $\vec u^0$ to be the solution vector obtained by solving (\ref{eq:dg-disc}), the iteration then proceeds as
\begin{align}
  \label{eq:iteration}
  \vec v^m &= \mathcal{P}\left(\vec u^{m-1} - \gamma\left(\vec A\vec u^{m-1} - \vec L\right)\right), \\
  \vec u^m &= \mathcal{P}\left(\vec u^{m-1} - \gamma\left(\vec A\vec v^m - \vec L\right)\right),
\end{align}
and the process is terminated when $\norm{\vec u^m - \vec u^{m-1}}{\ell^2} < \operatorname{tol} := 10^{-6}$. Other norms instead of $\ell^2$ could be used, however, in our implementation there seems to be no significant difference between the results using the $\ell^2$, $\ell^{\infty}$, and $\Ltwo{\domain}$ norms.

The experiments are all carried out in two spatial dimensions on triangular meshes, and unless otherwise stated, we use piecewise linear elements, a direct linear solver, and take $\pen = 10$, $\mu = 1$, and $\gamma = 10^{-5}$.

\subsection{Convergence on a uniform mesh with a smooth solution}
\label{ex:drift-diffusion-benchmark}

We begin by verifying the convergence properties of the discretisation (\ref{eq:drift-diffusion-full-disc}) on a uniform triangular mesh with upper right diagonals of the domain $\domain\times\timedom = \left(0, 1\right)^2\times\left(0, 1/2\right]$, using polynomial degrees $p = 1, 2$. To obtain a manufactured solution we choose
\begin{align}
  \ddvar &= \sin{\pi t}\sin{\pi x}\sin{\pi y}, \\
  \elpot &= \sin{\pi t}\cos{\pi x}\cos{\pi y},
\end{align}
and include an additional appropriate forcing term in (\ref{eq:drift-diffusion-full-disc}). The initial condition is thus $\ddvarinit = 0$. The error $\enorm{\ddvar^N - \qp{\finddvar^N}^+}$ is examined on a sequence of successively finer meshes, with mesh size parameters chosen such that $h^{-1} = 4, 8, 16, 32, 64$, and we fix $\timestep = h^{p+1}$. The results are depicted in Figure \ref{fig:drift-diffusion-benchmark}, and demonstrate $\mathcal{O}(h^{p+1})$ convergence. In this case the iteration given by (\ref{eq:iteration}) converged in a single step at each time step, meaning the solution to the dG scheme (\ref{eq:dg-disc}) satisfied the bounds at the nodes without the need for projection.

\begin{figure}[h]
  \centering
  \pgfplotstableread[col sep=comma]{
  h,u error,Time taken
  0.25,0.3010842158188604,0.01051187515258789
  0.125,0.07385120420497472,0.04257822036743164
  0.0625,0.01797168968453009,0.6331150531768799
  0.03125,0.004414689422465761,9.48153042793274
  0.015625,0.0010929925484454747,214.08147621154785
}\errorspone

\pgfplotstableread[col sep=comma]{
  h,u error,Time taken
  0.25,0.037070591940242835,0.018883466720581055
  0.125,0.0033208339254063323,0.25254225730895996
  0.0625,0.0003035655324993977,14.94091248512268
  0.03125,2.8834884099524003e-05,534.3486318588257
  0.015625,2.8813806782401585e-06,16574.799842357635
}\errorsptwo

\begin{tikzpicture}[scale=0.75]
  \begin{axis}[
    % ymin = 1e-6,
    % ymax = 1e-1,
    cycle list/Dark2,
    thick,
    xmode=log,
    ymode=log,
    xlabel=$h$,
    ylabel=$\enorm{\ddvar^N - \qp{\finddvar^N}^+}$,
    grid=both,
    minor grid style={gray!25},
    major grid style={gray!25},
    % no marks,
    legend style={at={(1.0, 0.0)},anchor=south east},
    ]
    \addplot[color=black, mark=triangle] table[x=h, y=u error, col sep=comma]{\errorspone};
    \addlegendentry{$p=1$};
    \addplot[color=black, mark=square] table[x=h, y=u error, col sep=comma]{\errorsptwo};
    \addlegendentry{$p=2$};
    
    % Guide lines
    \coordinate (start-value-1) at (0.3, 0.6);
    \coordinate (start-value-2) at (0.3, 0.2);
    \coordinate (start-value-3) at (0.3, 0.01);
    \coordinate (end-value-1) at (0.015, 0.03);
    \coordinate (end-value-2) at (0.015, 0.0005);
    \coordinate (end-value-3) at (0.015, 1.25e-6);
    
    \draw[dotted] (start-value-2) -- (end-value-2) node[pos=0.7, below] {$2.0$};
    \draw[dotted] (start-value-3) -- (end-value-3) node[pos=0.7, below] {$3.0$};
  \end{axis}
\end{tikzpicture}

%%% Local Variables:
%%% mode: latex
%%% TeX-master: "BAIL_proceedings"
%%% End:
  \caption{Convergence of the solution to (\ref{eq:drift-diffusion-full-disc}) on a uniform mesh in the $\enorm{\cdot}$-norm using piecewise polynomials of degree $p = 1, 2$. Convergence is observed at a rate of $\mathcal{O}(h^{p+1})$.}
  \label{fig:drift-diffusion-benchmark}
\end{figure}
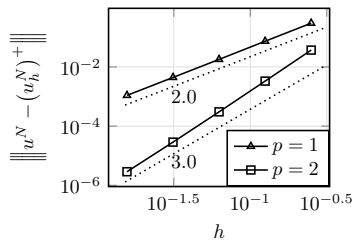

\subsection{Structure preservation with discontinuous initial conditions}
\label{ex:drift-diffusion-structure-preservation-initial-conditions}

We first demonstrate the structure preservation of the method by investigating a problem setup with discontinuous initial conditions in a checkerboard-type pattern. Taking $\domain\times\timedom = \qp{0, 1}^2\times\left(0, 0.03\right]$, we solve (\ref{eq:drift-diffusion-full-disc}) with piecewise linear elements on a uniform triangular mesh with right-aligned diagonals, with mesh size parameter $h = 0.01$ and time step size $\timestep = 3\times10^{-4}$. At initial time the value of $u_0$ is either 0 or 1, as shown in Figure \ref{fig:dd-snapshot-initial}, and we let $\elpot = 100\qp{x + y}$. We note that $\Delta\elpot = 0$.

The evolution of the solution to (\ref{eq:drift-diffusion-full-disc}) is shown through the snapshots in Figure \ref{fig:dd-snapshots}, where the combination of the diffusive and advective behaviours can be seen. A boundary layer is formed towards the bottom left corner of the domain, and Figure \ref{fig:dd-minimum} demonstrates that the the solution to (\ref{eq:dg-disc}) does not remain non-negative in the context of this problem, despite respecting the upper bound, as shown in Figure \ref{fig:dd-maximum}. By design the nodally bound-preserving solution satisfies both constraints. We examine the $L^2$-norm of the solution in Figure \ref{fig:dd-energy}, where monotonic decay is demonstrated for both solutions, in agreement with Lemma \ref{lem:dg-disc-energy} and Lemma \ref{lem:drift-diffusion-full-disc-energy}.

\begin{figure}[h]
  \centering
  \subcaptionbox{$t^n = 0.0$\label{fig:dd-snapshot-initial}}
  [0.19\textwidth]{
    \includegraphics[width=\linewidth]{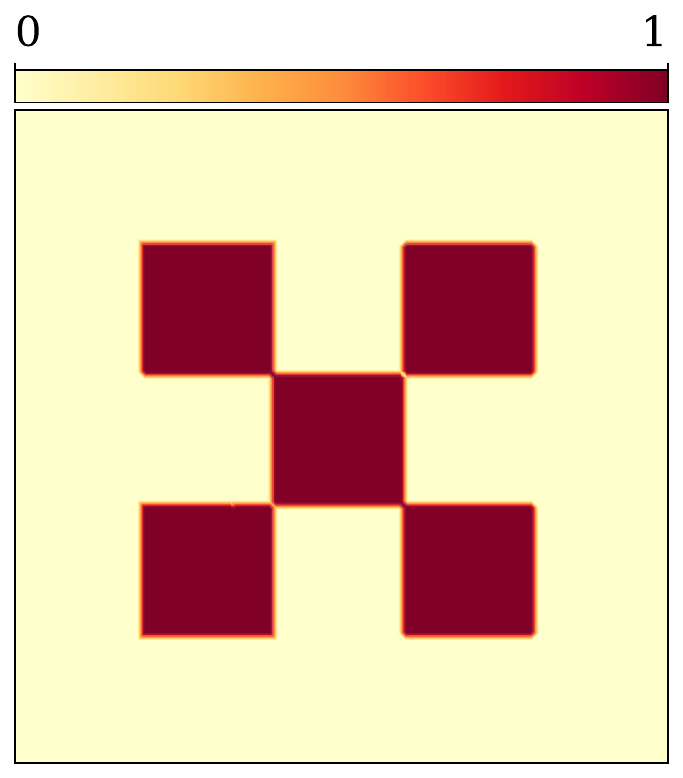}
  }
  \hfill
  \subcaptionbox{$t^n = 9.0\times10^{-4}$}
  [0.19\textwidth]{
    \includegraphics[width=\linewidth]{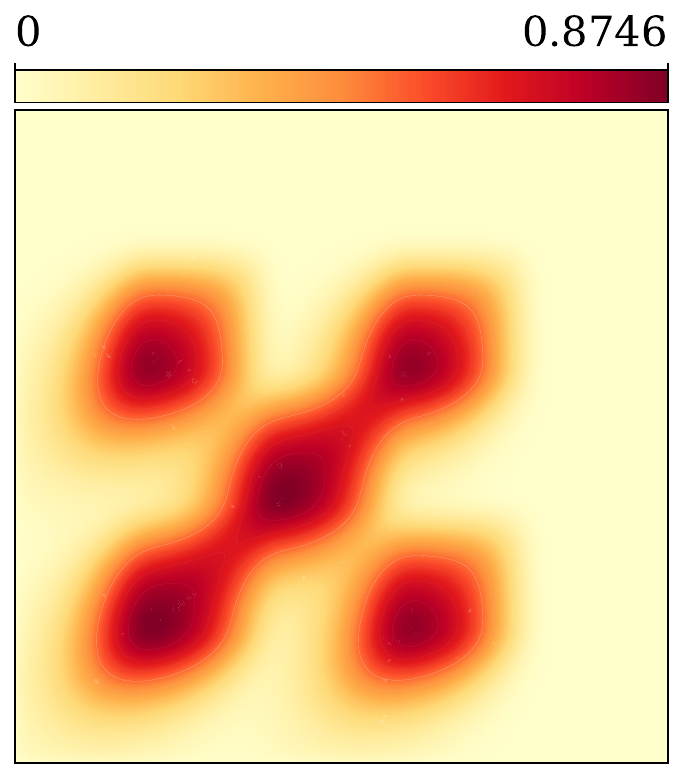}
  }
  \hfill
  \subcaptionbox{$t^n = 2.3\times10^{-3}$}
  [0.19\textwidth]{
    \includegraphics[width=\linewidth]{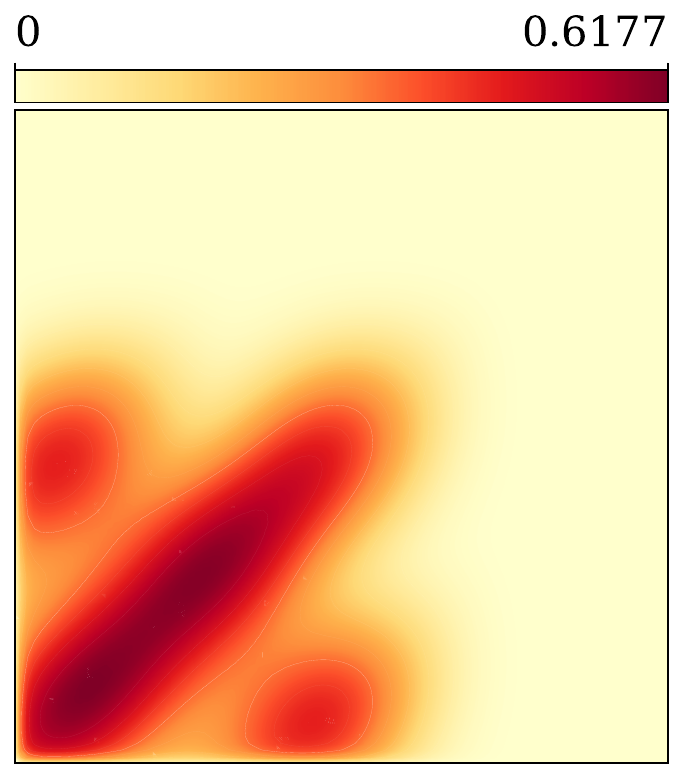}
  }
  \hfill
  \subcaptionbox{$t^n = 6.3\times10^{-3}$}
  [0.19\textwidth]{
    \includegraphics[width=\linewidth]{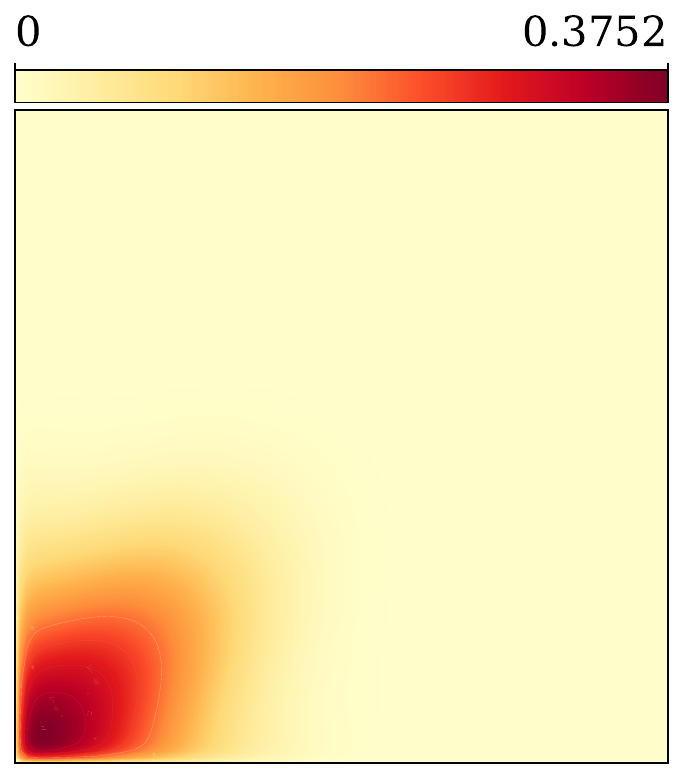}
  }
  \hfill
  \subcaptionbox{$t^n = 1.29\times10^{-2}$}
  [0.19\textwidth]{
    \includegraphics[width=\linewidth]{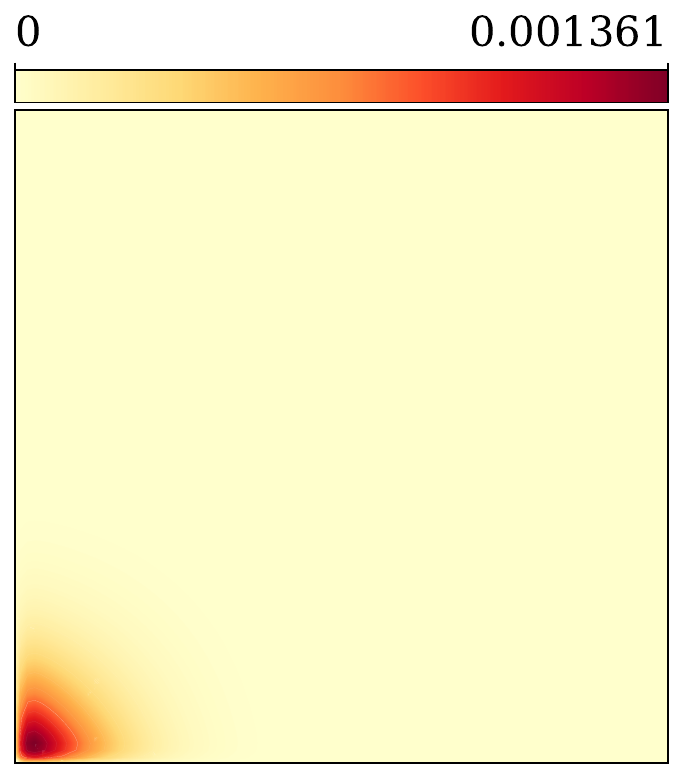}
  }
  \caption{Snapshots of the solution to the example from Section \ref{ex:drift-diffusion-structure-preservation-initial-conditions}.}
  \label{fig:dd-snapshots}
\end{figure}

\begin{figure}[h]
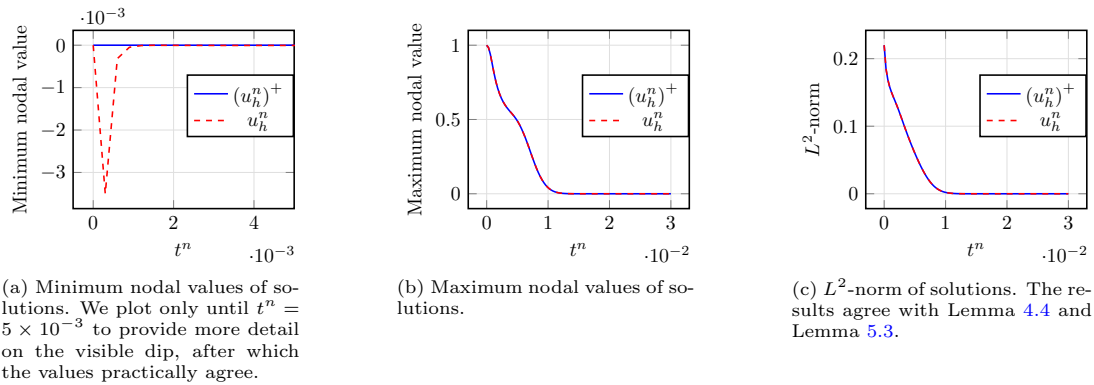

  \centering
  \subcaptionbox{Minimum nodal values of solutions. We plot only until $t^n = 5\times 10^{-3}$ to provide more detail on the visible dip, after which the values practically agree.\label{fig:dd-minimum}}
  {\input{dd_minimum}}
  \hspace{3em}
  \subcaptionbox{Maximum nodal values of solutions.\label{fig:dd-maximum}}
  {\input{dd_maximum}}
  \hspace{3em}
  \subcaptionbox{$L^2$-norm of solutions. The results agree with Lemma \ref{lem:dg-disc-energy} and Lemma \ref{lem:drift-diffusion-full-disc-energy}.\label{fig:dd-energy}}
  {\input{dd_energy}}
  \caption{Plots comparing the nodally bound-preserving and unconstrained solutions for the example in Section \ref{ex:drift-diffusion-structure-preservation-initial-conditions}.}
  \label{fig:dd-minimum-energy}
\end{figure}

\subsection{Structure preservation with time-dependent boundary conditions}
\label{ex:drift-diffusion-structure-preservation}

Next we consider a problem with non-homogeneous boundary conditions. The spatial domain $\qp{0, 1}^2 \setminus B_{0.1}\qp{\qp{1/2, 1/2}}$ is a unit square with a disc of radius $0.1$ removed from the centre, and the temporal domain is the interval $\left(0, 1\right]$. We examine two different choices of $\elpot$, given by $\elpot_1 = 100\sin{\pi \qp{2x - 1/2}}$ and $\elpot_2 = -100\sin{\pi \qp{2x - 1/2}}$, where we observe that $\Delta\elpot_1 \leq 0$ and $\Delta\elpot_2 \nleq 0$ over the considered domain. Defining
\begin{equation}
  g\!\left(t\right) := \half + \half\tanh\!\left(8\left(2t - \half\right)\right),
\end{equation}
we take the Dirichlet boundary conditions $u = 0$ on the boundary of the square and $u = g\!\left(t\right)$ on the boundary of the disc. The initial condition is taken as $u_0 = 0$. We solve the two problems using (\ref{eq:drift-diffusion-full-disc}) with piecewise linear elements on a quasi-uniform Delaunay mesh of the domain that results in $\approx 68,000$ degrees of freedom and a polygonal approximation of the curved boundary. The time step size is fixed at $\timestep = 0.01$, and for this problem we set $\pen = 100$.

Results concerning the solutions of (\ref{eq:dg-disc}) and (\ref{eq:drift-diffusion-full-disc}) with $\psi_1$ are shown in Figures \ref{fig:elpot-one-snapshot} and \ref{fig:elpot-one-layer}, and results with $\psi_2$ are shown in Figures \ref{fig:elpot-two-snapshot} and \ref{fig:elpot-two-layer}.
Figure \ref{fig:elpot-one-snapshot} shows the solution at final time with $\elpot_1$, and Figure \ref{fig:elpot-one-layer} plots the minimum value of the solutions to both (\ref{eq:dg-disc}) and (\ref{eq:drift-diffusion-full-disc}) over time, demonstrating the non-negativity of the latter in contrast to the former. We observe in Figure \ref{fig:elpot-two-snapshot} the formation of boundary layers around the central disc for the problem involving $\elpot_2$. The resulting spurious oscillations are examined in greater detail in Figure \ref{fig:elpot-two-layer}, where it can be seen that the solution to (\ref{eq:dg-disc}) becomes negative, whereas the nodally non-negative solution to (\ref{eq:drift-diffusion-full-disc}) respects the bound. The oscillations also reduce in magnitude.

\begin{figure}[h]
  \centering
  \subcaptionbox{Colour map of the nodally non-negative solution with $\elpot_1$.\label{fig:elpot-one-snapshot}}
  [0.19\textwidth]{\includegraphics[width=\linewidth]{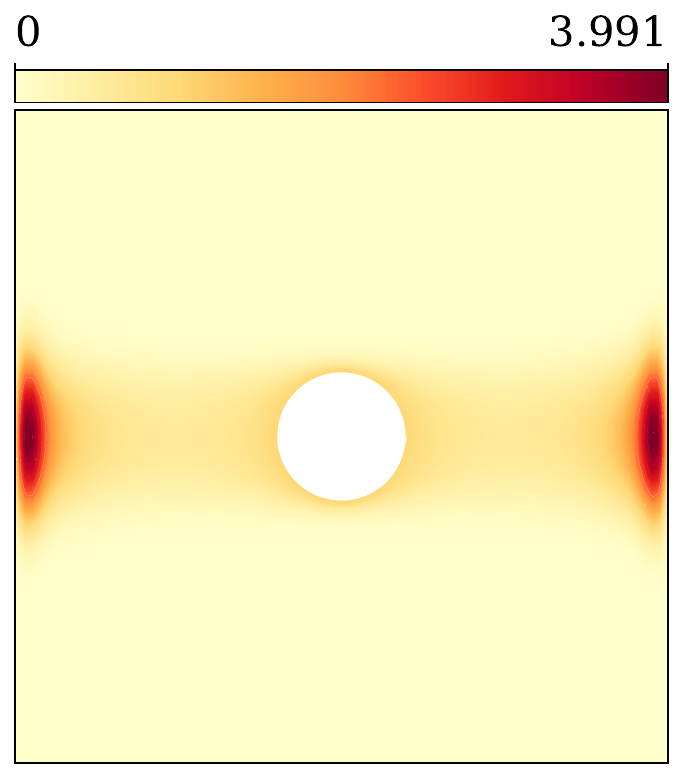}}
  \hfill
  \subcaptionbox{Minimum nodal values of solutions with $\elpot_1$.\label{fig:elpot-one-layer}}
  [0.22\textwidth]{\input{minimum}}
  \hfill
  \subcaptionbox{Colour map of the nodally non-negative solution with $\elpot_2$. The marked line indicates the partial slice shown in Figure \ref{fig:elpot-two-layer}.\label{fig:elpot-two-snapshot}}
  [0.19\textwidth]{
    \includegraphics[width=\linewidth]{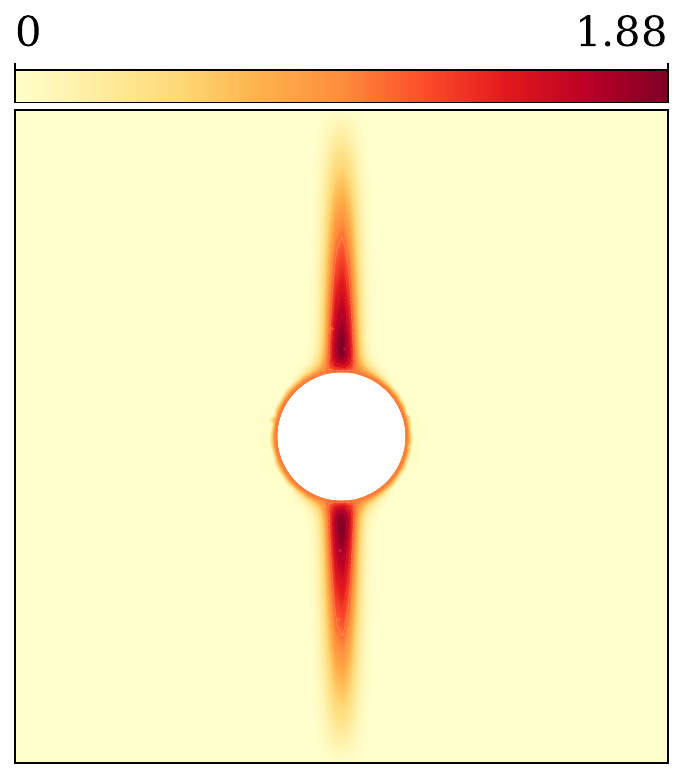}%
    \begin{tikzpicture}[overlay]
      \draw (-2.1, 1.575) -- (-1.85, 1.575);
    \end{tikzpicture}}
  \hfill
  \subcaptionbox{Partial slice, corresponding to the marked line in Figure \ref{fig:elpot-two-snapshot}, along the line $y = 1/2$ with $\elpot_2$.\label{fig:elpot-two-layer}}
  [0.22\textwidth]{\pgfplotstableread[col sep=comma]{
  ,u,Points:0,Points:1,Points:2
158,1.1374e-13,0.3031,0.5,0
157,1.8132e-06,0.31106,0.5,0
161,1.9647e-06,0.31174,0.5,0
159,1.8138e-06,0.31243,0.5,0
137,6.1263e-13,0.32038,0.5,0
116,1.412e-05,0.32837,0.5,0
80,1.5253e-05,0.32903,0.5,0
61,1.4124e-05,0.32968,0.5,0
59,1.5246e-12,0.33767,0.5,0
60,0.00014606,0.34567,0.5,0
147,0.00015737,0.3463,0.5,0
94,0.00014609,0.34693,0.5,0
78,3.5664e-11,0.35494,0.5,0
77,0.0015734,0.36298,0.5,0
79,0.0016734,0.36358,0.5,0
68,0.0015737,0.36418,0.5,0
66,6.3116e-10,0.37222,0.5,0
26,0.02888,0.38053,0.5,0
27,0.029884,0.38104,0.5,0
185,0.029574,0.38134,0.5,0
184,0.00011541,0.39137,0.5,0
4,4.1808e-10,0.39151,0.5,0
5,0.028151,0.39172,0.5,0
154,1.0,0.40012,0.5,0
}\slicenodal

\pgfplotstableread[col sep=comma]{
  ,u,Points:0,Points:1,Points:2
19,-9.8302e-05,0.30501,0.5,0
13,0.0006047,0.31198,0.5,0
12,-0.00083273,0.3237,0.5,0
78,-0.001221,0.32515,0.5,0
88,0.00032738,0.32918,0.5,0
77,0.0026909,0.3371,0.5,0
60,0.0008059,0.34043,0.5,0
41,-0.0053484,0.34893,0.5,0
40,0.0086734,0.36003,0.5,0
18,0.0094184,0.36045,0.5,0
17,0.007172,0.36103,0.5,0
16,-0.026763,0.37162,0.5,0
10,0.032535,0.37726,0.5,0
9,0.10447,0.38208,0.5,0
20,-0.29582,0.39149,0.5,0
81,-0.29582,0.39149,0.5,0
35,-0.29582,0.39149,0.5,0
86,1.0,0.40012,0.5,0
}\slicestandard

\begin{tikzpicture}[scale=0.75]
  \begin{axis}[
      %ymin = 1e-6,
      %ymax = 1e-1,
      cycle list/Dark2,
      thick,
      % xmode=log,
      % ymode=log,
      xlabel=$x$,
      % ylabel=,
      grid=both,
      minor grid style={gray!25},
      major grid style={gray!25},
      %no marks,
      legend style={at={(0.0, 0.5)},anchor=south west},
    ]
    \addplot+[color=blue] table[x=Points:0, y=u, col sep=comma]{\slicenodal};
    \addlegendentry{$\left(u_h^n\right)^+$};
    \addplot+[color=red, dashed] table[x=Points:0, y=u, col sep=comma]{\slicestandard};
    \addlegendentry{$u_h^n$};

  \end{axis}
\end{tikzpicture}

%%% Local Variables:
%%% mode: latex
%%% TeX-master: "BAIL_proceedings"
%%% End:}
  \caption{Snapshots at final time of the nodally non-negative solutions from the example in Section \ref{ex:drift-diffusion-structure-preservation} and comparisons to the corresponding solutions to (\ref{eq:dg-disc}).}
  \label{fig:drift-diffusion-snapshots}
\end{figure}

The energy change
\begin{equation}
  \label{eq:modified-energy}
  E(w_h^n) := \left(\half\norm{w_h^n}{\Ltwo{\domain}}^2 - \half\norm{w_h^{n-1}}{\Ltwo{\domain}}^2 - \timestep\int_{\bound}\left(w_h^n\nabla w_h^n\cdot\normal + \half \left(w_h^n\right)^2\left(\nabla\elpot\cdot\normal\right)\right)\d s\right)^{1/2},
\end{equation}
which arises from integrating by parts during the energy argument and accounts for the non-homogeneous boundary conditions, is plotted over time in Figure \ref{fig:energy-decay}. We see the expected behaviour of monotonic energy decay when the condition $\Delta\elpot \leq 0$ is satisfied, in agreement with Lemma \ref{lem:dg-disc-energy} and Lemma \ref{lem:drift-diffusion-full-disc-energy}.
\begin{figure}[h]
  \centering
  \input{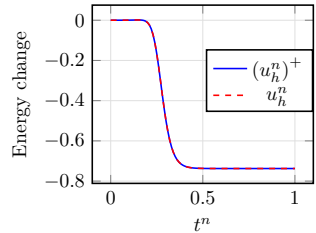}
  \caption{Evolution of the energy change (\ref{eq:modified-energy}) of the solution from Section \ref{ex:drift-diffusion-structure-preservation} with $\elpot = \elpot_1$, where $\Delta\elpot_1 \leq 0$. The results agree with Lemma \ref{lem:dg-disc-energy} and Lemma \ref{lem:drift-diffusion-full-disc-energy}.}
  \label{fig:energy-decay}
\end{figure}

\subsection{An application to the coupled Poisson-Nernst-Planck system}
\label{ex:PNP-structure-preservation}

In the final example we demonstrate the efficacy of our method as an approach to discretising the coupled Poisson-Nernst-Planck system (\ref{eq:PNP-strong}) discussed in the introduction. In this case $\elpot$ is a variable at each time step, and so we introduce $\beta_h(\cdot; \cdot, \cdot)$, for all $\Psi\in\Hone{\mathcal{T}}$, $w, v\in\H{2}{\mathcal{T}}$,
\begin{equation}
  \beta_h(\Psi; w,v)
  :=
  \sum_{K\in\mathcal T}
  \qp{\int_K w \nabla \Psi \cdot \nabla v  \d \vec x}
  -
  \int_{\mathcal{E}_{\rm{int}}}
  \qp{
    \left(\nabla \Psi \cdot \normal\right) \avg{w}\ujump{v}
    -
    \frac{\upen}{2}\lvert\nabla\Psi\cdot\normal\rvert\ujump{w}\ujump{v}
  }\d s.
\end{equation}
Notice that $\beta_h(\psi; w, v) = b_h(w,v)$. We then let
\begin{equation}
  \mathfrak A_h(\Psi; w,v)
  :=
  \ip{w}{v}
  +
  \tau \qp{
    a_h(w, v)
    +
    \beta_h(\Psi; w, v)
  },
\end{equation}
the proposed numerical method is as follows. Given $\varepsilon > 0$ and $f, \posinit, \neginit\in\L\infty{\domain}$, so $\qp{\finpos^0}^+ = \Pi_h\!\left(\posinit\right)$ and $\qp{\finneg^0}^+ = \Pi_h\!\left(\neginit\right)$, for $n = 1, 2,\ldots, N$, seek $\left(\qp{\finpos^n}^+, \qp{\finneg^n}^+, \finelpot^n\right)\in\finspace_p^+\times\finspace_p^+\times\finspace_p$, such that
\begin{align}
  \mathfrak A_h\qp{\finelpot^{n-1}; \qp{\finpos^n}^+, \testpos - \qp{\finpos^n}^+} &\geq \ip{\qp{\finpos^{n-1}}^+}{\testpos - \qp{\finpos^n}^+}\quad\forall\testpos\in\finspace_p^+, \label{eq:PNP-pos} \\
  \mathfrak A_h\qp{\finelpot^{n-1}; \qp{\finneg^n}^+, \testneg - \qp{\finneg^n}^+} &\geq \ip{\qp{\finneg^{n-1}}^+}{\testneg - \qp{\finneg^n}^+}\quad\forall\testneg\in\finspace_p^+, \label{eq:PNP-neg} \\
  \perm\finforma{\finelpot^n}{\testelpot} &= \ip{\qp{\finpos^n}^+ - \qp{\finneg^n}^+ + f}{\testelpot} \Foreach\testelpot\in\finspace_p. \label{eq:PNP-elpot}
\end{align}

By taking the value of $\finelpot$ at the previous time step, equations (\ref{eq:PNP-pos}) and (\ref{eq:PNP-neg}) are instances of the nodally non-negative discretisation of the drift-diffusion equation given by (\ref{eq:drift-diffusion-full-disc}). As a consequence the system can be decoupled by first solving (\ref{eq:PNP-pos}) and (\ref{eq:PNP-neg}), and then (\ref{eq:PNP-elpot}). At the initial time step only (\ref{eq:PNP-elpot}) need be solved.

We test the method (\ref{eq:PNP-pos})--(\ref{eq:PNP-elpot}) over the time interval $\left(0, 0.3\right]$ on a uniform criss-cross triangulation of the unit square with piecewise linear elements, resulting in $360,000$ combined degrees of freedom for all variables. Fixing $\timestep = 5\times10^{-3}$ and $\pen = 100$, we take $\perm = 3\times10^{-4}$, and
\begin{align}
  &f\!\left(x\right) :=
    \begin{cases}
      -1, &x < 1/2, \\
      1, &x \geq 1/2,
    \end{cases}
  &&\posinit = \neginit := \frac{5}{2}\exp{-\left(8\left(x - \half\right)\right)^2 - \left(8\left(y - \half\right)\right)^2}.
\end{align}

Snapshots of the solution are shown in Figure \ref{fig:PNP-snapshots}. A supplementary video is included in the online submisssion of this work for more detail. After initially diffusing from the Gaussian initial condition, $\qp{\finpos}^+$ and $\qp{\finneg}^+$ develop localised regions of high concentration which oscillate between the left and right halves of the domain, increasing in magnitude and decreasing in area until reaching a peak at $t^n = 0.05$. The solutions then decay to zero. In Figure \ref{fig:PNP-minimum} the minimum nodal values of the concentrations are compared against a version of (\ref{eq:PNP-pos})--(\ref{eq:PNP-elpot}) which does not preserve nodal non-negativity -- that is, where (\ref{eq:PNP-pos}) and (\ref{eq:PNP-neg}) are instances of (\ref{eq:dg-disc}) rather than (\ref{eq:drift-diffusion-full-disc}). The method which does not preserve nodal non-negativity experiences significant oscillations and negative values around $t^n = 0.06$. By design the nodally non-negative solution does not become negative.

\begin{figure}[h]
  \centering
  \subcaptionbox{$t^n = 0.005$}
  [0.19\textwidth]{
    \includegraphics[width=\linewidth]{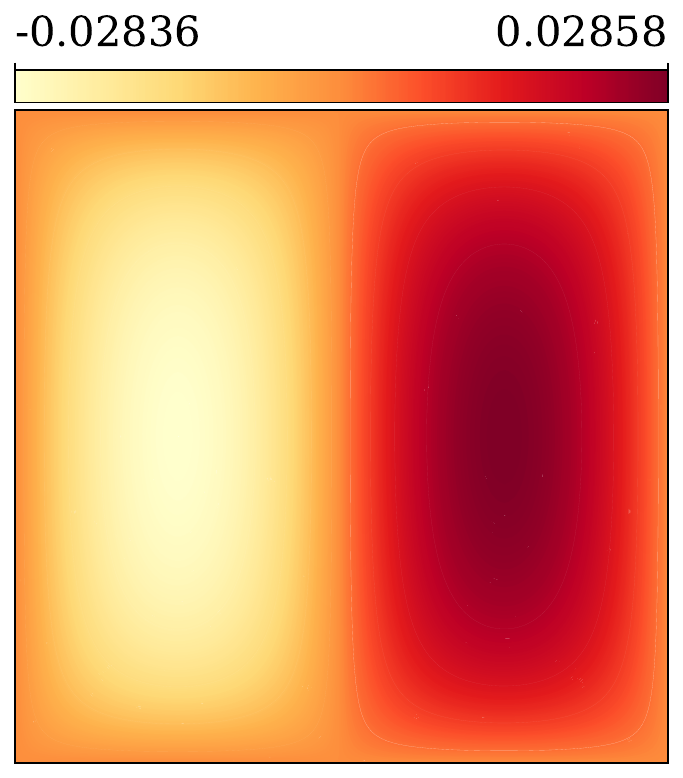}
    \includegraphics[width=\linewidth]{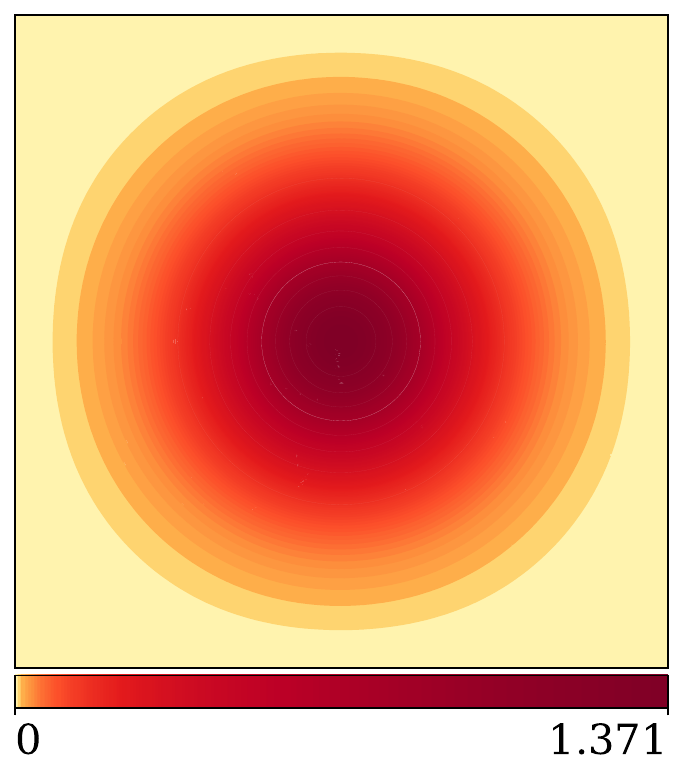}
  }
  \hfill
  \subcaptionbox{$t^n = 0.04$}
  [0.19\textwidth]{
    \includegraphics[width=\linewidth]{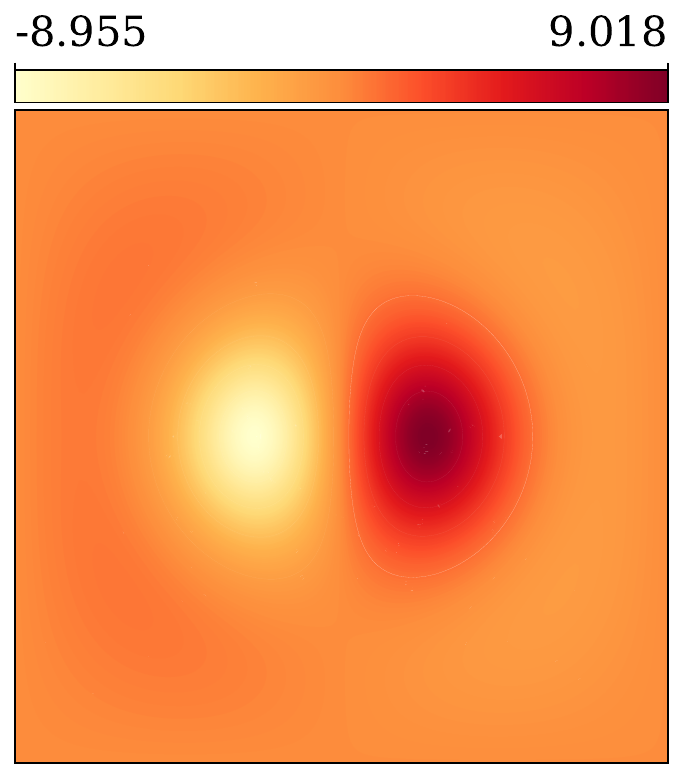}
    \includegraphics[width=\linewidth]{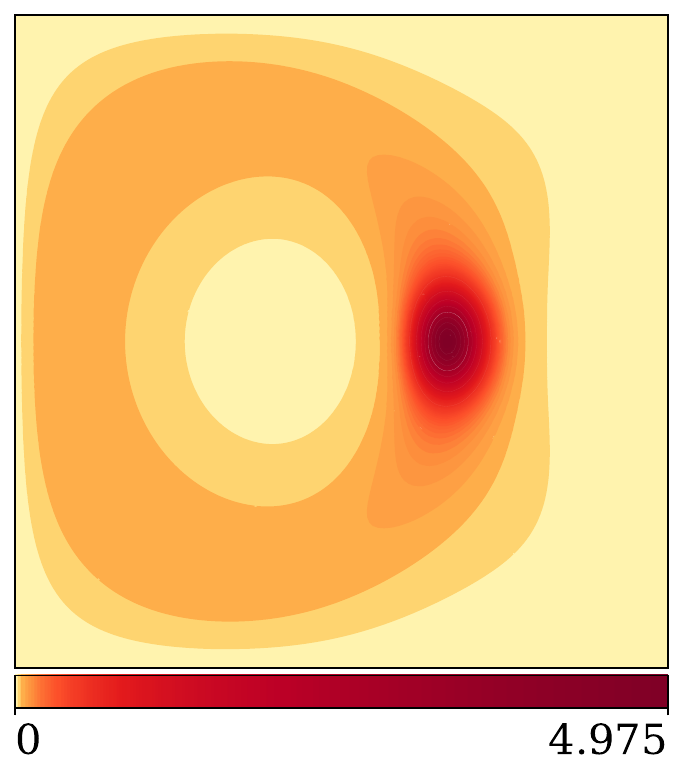}
  }
  \hfill
  \subcaptionbox{$t^n = 0.05$}
  [0.19\textwidth]{
    \includegraphics[width=\linewidth]{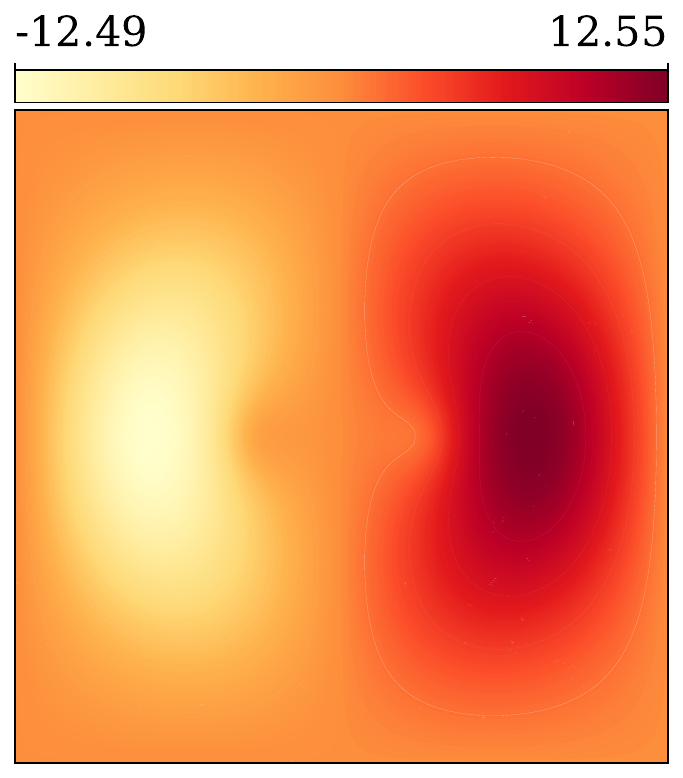}
    \includegraphics[width=\linewidth]{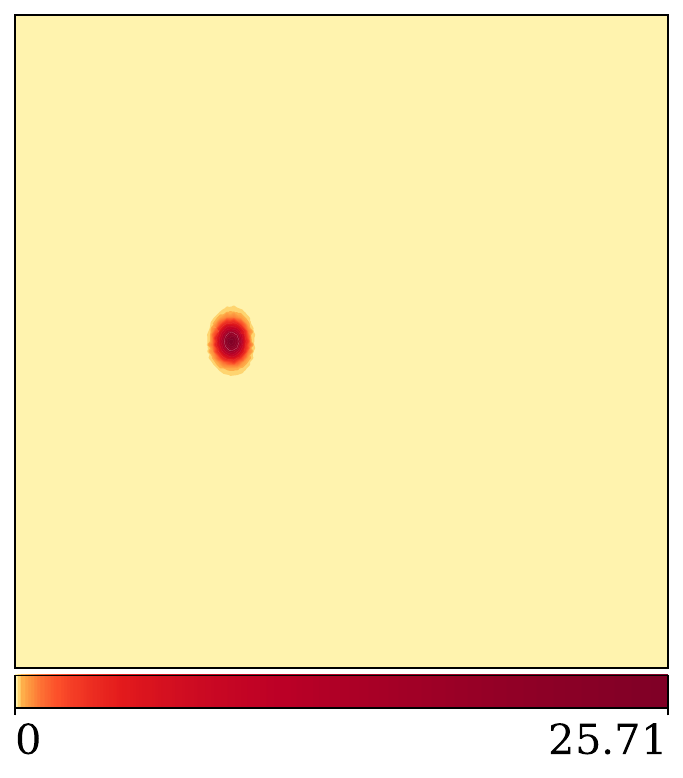}
  }
  \hfill
  \subcaptionbox{$t^n = 0.06$}
  [0.19\textwidth]{
    \includegraphics[width=\linewidth]{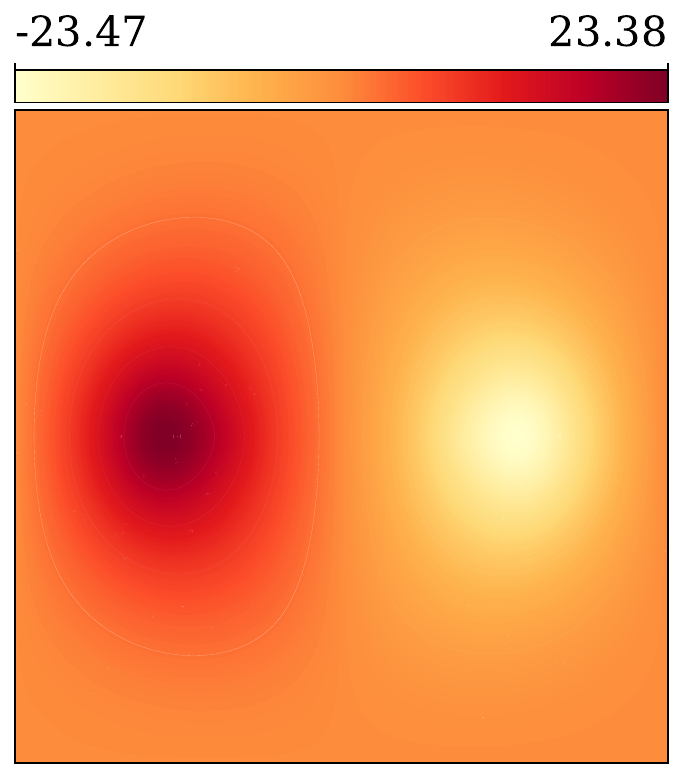}
    \includegraphics[width=\linewidth]{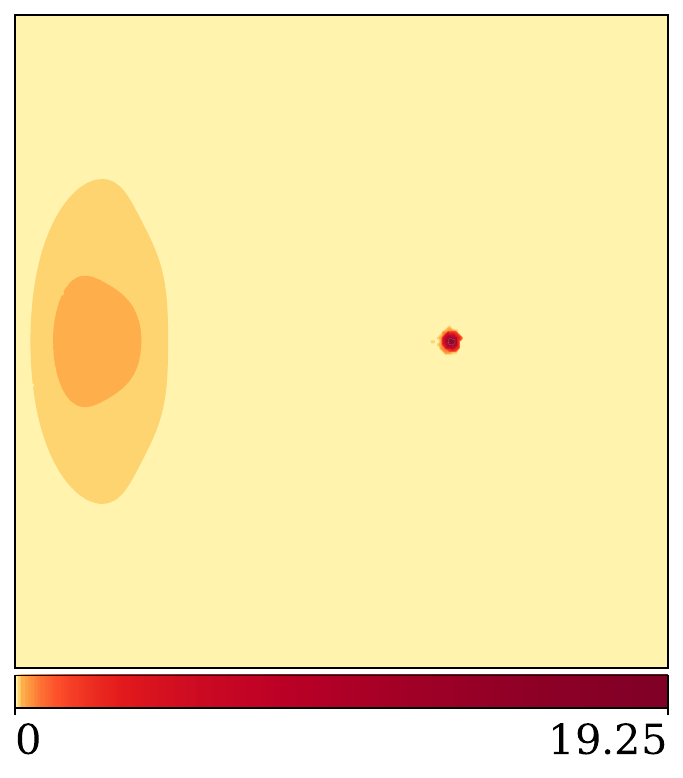}
  }
  \hfill
  \subcaptionbox{$t^n = 0.07$}
  [0.19\textwidth]{
    \includegraphics[width=\linewidth]{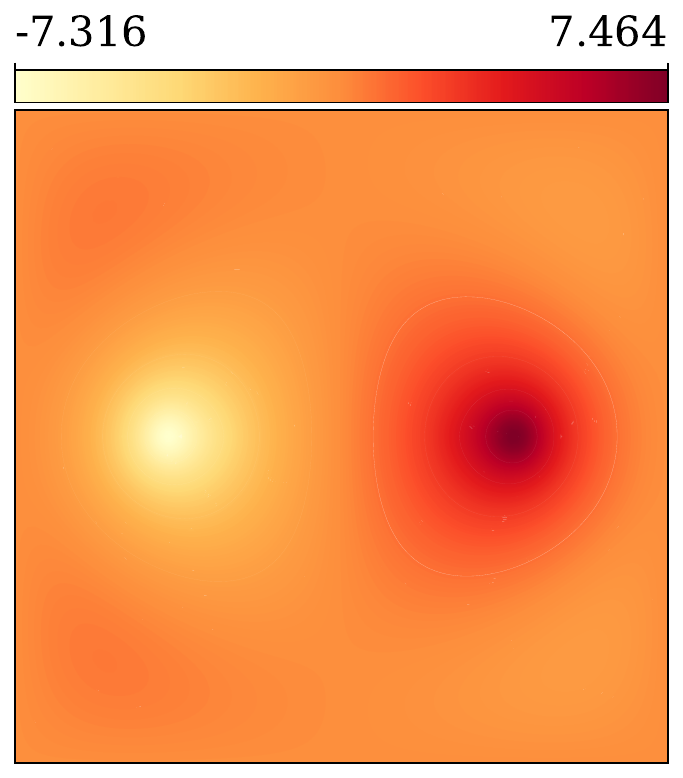}
    \includegraphics[width=\linewidth]{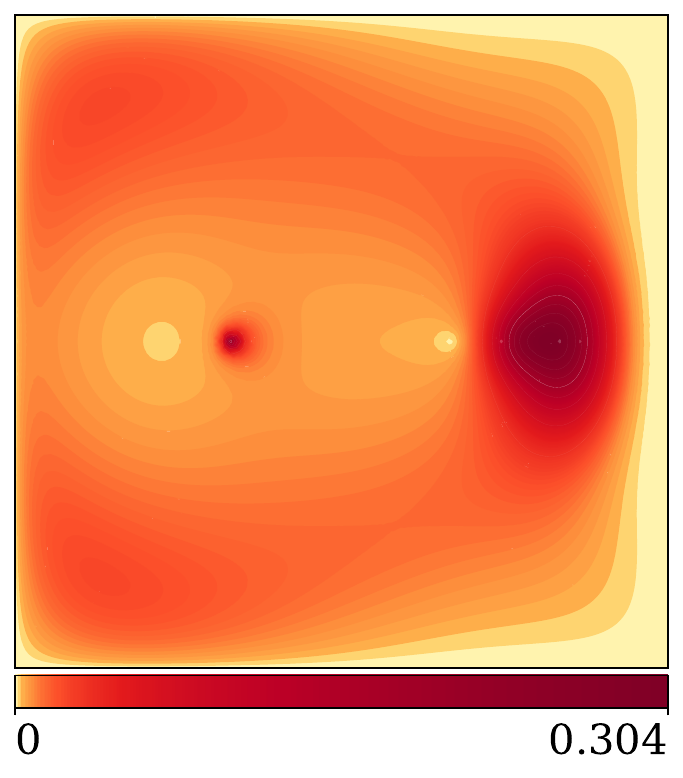}
  }
  \caption{Snapshots of (top) $\finelpot$ and (bottom) $\qp{\finpos}^+$ ($\qp{\finneg}^+$ is similar but reflected in the $y$-direction) from the numerical experiment in Section \ref{ex:PNP-structure-preservation}. We highlight that each subfigure has an individual colour bar and the plots for $\qp{\finpos}^+$ are plotted on a logarithmic scale.}
  \label{fig:PNP-snapshots}
\end{figure}

\begin{figure}[h]
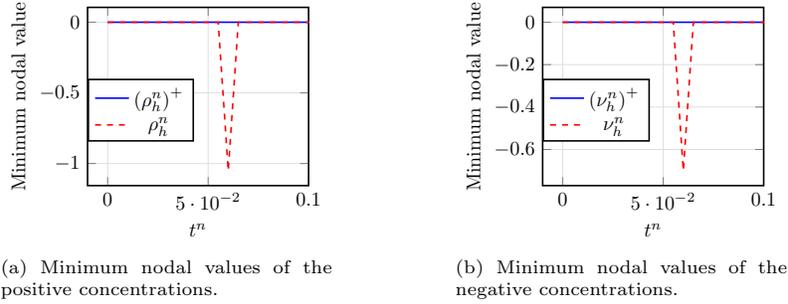

  \centering
  \subcaptionbox{Minimum nodal values of the positive concentrations.}
  {\input{PNP_minimum_pos}}
  \hspace{4em}
  \subcaptionbox{Minimum nodal values of the negative concentrations.}
  {\input{PNP_minimum_neg}}
  \caption{Evolution of the minimum nodal values of the solution to the example from Section \ref{ex:PNP-structure-preservation}. The values for the unconstrained solutions are also plotted for comparison.}
  \label{fig:PNP-minimum}
\end{figure}

\section{Concluding remarks}
\label{sec:conclusions}

The dG methods introduced and analysed in this work are shown to converge optimally in the energy norm, and the nodally bound-preserving method respects a discrete analogy of the maximum principle enjoyed by the drift-diffusion equation. Both discretisations also satisfy an energy dissipation law under the same assumptions on the data as the PDE. Although we focus on a backward Euler time discretisation, we also demonstrate structure preservation for a Crank--Nicolson scheme and discuss other higher-order-in-time methods. The presentation of the nodally bound-preserving method in a variational inequality framework simplifies the treatment of the analysis, and the implementation is achieved via a straightforward iterative projection method at each time step. We summarise with extensive numerical experiments, which demonstrate the just-described properties and the effectiveness of the methods. Finally, with a view towards future work, we consider an extension to the coupled Poisson-Nernst-Planck system, showing the robustness of the bound preservation in this case.

\section*{Funding sources}

AT is supported by a scholarship from the EPSRC Centre for Doctoral
Training in Advanced Automotive Propulsion Systems (AAPS), under the
project EP/S023364/1. TP is grateful for partial support from the
EPSRC grants EP/X030067/1, EP/W026899/1. Both TP and GRB are supported
by the Leverhulme Trust Research Project Grant RPG-2021-238.

%%===========================================================================================%%
%% If you are submitting to one of the Nature Portfolio journals, using the eJP submission   %%
%% system, please include the references within the manuscript file itself. You may do this  %%
%% by copying the reference list from your .bbl file, paste it into the main manuscript .tex %%
%% file, and delete the associated \verb+\bibliography+ commands.                            %%
%%===========================================================================================%%

\bibliographystyle{alpha}
\bibliography{standard_pde_fem, DG, software_packages, PNP, BSFEM, batteries_chemistry, NSPNP}
%% if required, the content of .bbl file can be included here once bbl is generated
%%\input sn-article.bbl

\end{document}